%%%%%%%%%%%%%%%%%%%%%%%%%%%%%%%%%%%%%%%%%%%%%%%%%%%%%%%%%%%%%%%
%%                                                           %%
%%                                %%
%%                                                           %%
%%  Paramodular sign dim                    by                             %%
%%                                                           %%
%%           Tomoyoshi Ibukiyama                             %%
%%                                                           %%
%%  
%%                                                           %%
%%%%%%%%%%%%%%%%%%%%%%%%%%%%%%%%%%%%%%%%%%%%%%%%%%%%%%%%%%%%%%%

%%%%%%%%%%%%%%%%%%%%%%%%%%%%
%% This is a LaTeX2e file %%
%%%%%%%%%%%%%%%%%%%%%%%%%%%%
\documentclass[12pt,a4paper]{amsart}
\usepackage{amsmath,amsfonts,amssymb}
\usepackage{url}

%%%%%%%%%%%%%%%
%% Macros    %%
%%%%%%%%%%%%%%%
     
\def\Z{{\Bbb Z}}
\def\Q{{\Bbb Q}}    
\def\R{{\Bbb R}}
\def\C{{\Bbb C}}

\def\cL{{\mathcal L}}
\def\cM{{\mathcal M}}

\def\fM{{\mathfrak M}}
\def\hh{{\widetilde{h}}}
\def\hg{{\widehat{g}}}
\def\tg{{\widetilde{g}}}
\def\hf{{\widetilde{\phi}}}

%%%%%%%%%%%%%%%%%%%%
%% Theorem's etc. %%
%%%%%%%%%%%%%%%%%%%%
\theoremstyle{plain}
\newtheorem{thm}{Theorem}[section]
\newtheorem{lemma}[thm]{Lemma}
\newtheorem{prop}[thm]{Proposition}
\newtheorem{cor}[thm]{Corollary}

%%%%%%%%%%%%%%%%%%
%% The document %%
%%%%%%%%%%%%%%%%%%

\begin{document}

\title[Dimensions of paramodular forms with involutions]{Dimensions of paramodular forms and compact twist modular forms with involutions}
\author{Tomoyoshi Ibukiyama}
\address{Department of Mathematics, Graduate School of Science, Osaka University,
Machikaneyama 1-1, Toyonaka, Osaka, 560-0043 Japan.}
\email{ibukiyam@math.sci.osaka-u.ac.jp}
\keywords{Paramodular forms, algebraic modular forms}
\thanks
{This work was supported by JSPS KAKENHI Grant Number JP19K03424, 
JP20H00115, JP23K03031, and AIM SQuaRE ``Computing paramodular forms''}
\subjclass{11F46,11F55,11F72}
\maketitle

\begin{abstract}
We give an explicit dimension formula for paramodular forms of degree two of 
prime level with plus or minus sign of the Atkin--Lehner involution of 
weight $\det^k\operatorname{Sym}(j)$ with $k\geq 3$, 
as well as a dimension formula for algebraic modular forms of any weight associated with the binary quaternion hermitian maximal lattices in non-principal genus of 
prime discriminant with fixed sign of the involution.
These two formulas are essentially equivalent 
by a recent result of N. Dummigan,  A. Pacetti. G. Rama and G.  Tornar\'ia 
on correspondence between algebraic modular forms  
and paramodular forms with signs. So we give the formula by calculating the latter.
When $p$ is odd, our formula for the latter 
is based on a class number formula of  
some quinary lattices by T. Asai and its interpretation to the type number of 
quaternion hermitian forms given in our previous works. 
On paramodular forms, we also give a dimensional bias between plus 
and minus eigenspaces, some list of palindromic 
Hilbert series, numerical examples for small $p$ and $k$, and the complete list of 
primes $p$ such that there is no paramodular cusp form of level $p$ of weight 3 with plus sign. 
This last result has geometric meaning on moduli of Kummer 
surface with $(1,p)$ polarization. 
\end{abstract}

\section{Introduction}\label{sec:intro}
\allowdisplaybreaks[4]

The paramodular forms of degree $2$ of level $p$ are Siegel modular forms 
associated with the moduli space of abelian surfaces with polarization of type 
$(1,p)$. These forms have recently gathered significant attentions from 
researchers. 
For example, denote by $B$ the definite quaternion algebra with discriminant $p$
and $B_A^{\times}$ its adelization. Then  
Eichler's classical work established  
a well-known Hecke equivariant 
bijection between elliptic new cusp forms of level $p$ 
and algebraic modular forms on $B_A^{\times}$
which can be represented as vectors of some harmonic polynomials of $4$ variables.
Paramodular cusp forms arise in a natural extension 
of this correspondence to the case of 
symplectic groups of rank two a la Langlands.
In fact, the present author proposed a conjectural correspondence 
between paramodular forms of prime level $p$ and algebraic modular forms 
on $G_A$ which is the adelization of the quaternion hermitian group 
\[
G=\{g\in M_2(B); gg^*=n(g)1_2\}, \qquad g^*=(\overline{g_{ji}}) \text{ for } g=(g_{ij})
\]
with respect to the stabilizers of 
a maximal quaternion hermitian lattice in the non-principal genus,
where $\overline{*}$ is the main involution of $B$. 
Similarly as in the classical case, such algebraic modular forms in case of weight $0$  
have relations to some arithmetics of 
supersingular abelian surfaces as written for example 
in \cite{ibulattice}. 
The conjecture regarding this correspondence was initially proposed in \cite{ibuparamodular} 
for the scalar valued prime level 
case, later extended to the vector valued case in \cite{ibuparavector},
and also to the square-free level case in \cite{ibukitayama}.
These conjectures 
were based on dimensional equalities and numerical examples. 
In these works, new forms were defined in a slightly different
way from the elliptic modular case,  
and certain concrete lifting behaviours were also conjectured.
By the way, later the theory of new forms 
based on paramodular fixed vectors has been 
extensively developed in Robert--Schmidt 
\cite{robertschmidt} and Schmidt \cite{schmidtiwahori}.

Recently, the above conjectural correspondence has been proved by van Hoften \cite{vanhoften} and R\"osner--Weissauer \cite{weissauer} independently. 
Subsequently, the correspondence has been refined to the correspondence 
between plus and minus eigenspaces of Atkin--Lehner type involutions
in Dummigan--Pacetti--Rama--Tornar\'ia \cite{dummigan}.

The main objective of this paper is to give explicitly  
dimensions of the above algebraic modular forms and 
dimensions of paramodular forms of prime level with 
the Atkin--Lehner plus or minus sign. In these formulas,
weights $\det^kSym(j)$ of paramodular forms are restricted to 
the case $k\geq 3$ since there exists no algebraic modular forms 
corresponding to the case $k<3$. 
But the formula for $k\geq 3$ is also useful for determining  
dimensions for weight $2$ for concrete cases 
as explained in Poor--Yuen \cite{poor}. 
Their motivation behind \cite{poor} is a 
Shimura--Taniyama type conjecture for abelian surfaces, which is a generalization
of the fact that the zeta functions of elliptic curves defined over $\Q$ are given 
by those of 
elliptic cusp forms of weight $2$. Examples of this type of abelian surfaces  
were first explored in Yoshida \cite{yoshida}. It is conjectured more precisely in Brumer--Kramer \cite{brumerkramer} that 
for any abelian surface $A$ defined over $\Q$ with 
$End(A)=\Z$ of conductor $p$, there should exist a paramodular form of weight 
$2$ of level $p$ whose $L$ function gives the $L$ function of $A$. 
Explicit calculations of paramodular 
forms of weight $2$ of small concrete levels were carried out in \cite{poor}, 
which also includes some calculation of dimensions of higher weights of plus and minus eigenvalues. 
The examples for $k\geq 3$ 
obtained from their calculations coincide with the data
derived from our general formula. 

The outline of our paper is as follows.
In section \ref{sec:theorem}, we state our main theorems \ref{compactdim} and \ref{paramodular} on dimensions without going into details.
In section \ref{proof2}, we prove Theorem \ref{paramodular}.  
In section \ref{proof1}, we prove Theorem \ref{compactdim}
for $p\neq 2$, $3$. 
In section \ref{p23}, using a more direct method, 
we prove Theorem \ref{compactdim} for $p=2$ and $3$. 
We also give explicit generating functions of dimensions for 
scalar valued case for $p=2$, $3$ as examples.
In section \ref{appendix}, we quote concrete dimension formulas of algebraic modular forms used in this paper 
from \cite{hashimotoibu} II
and explicitly describe the group characters we require.
 In section \ref{bias}, we show that 
$(-1)^k\bigl(\dim S_k^+(K(p))-\dim S_k^-(K(p))\bigr)\geq 0$ for any $k\geq 3$
in Theorem \ref{plusminusdiff}.
In section \ref{example}, we give several numerical examples of dimensions  
for small $p$ as well as a certain list of 
Hilbert series with palindromic property, and also 
tables of dimensions 
for $k=3$, $4$, $5$, $6$, $7$, $8$, $10$ 
for small $p$. In particular, we determine all primes $p$ such that 
$\dim S_3^{+}(K(p))=0$.

\section{Main Theorems}\label{sec:theorem}
In this section, we will state our results on 
explicit dimension formulas for algebraic modular forms and 
paramodular forms that have the Atkin--Lehner plus or minus sign without going into details.
Precise definitions and most proofs will be given in later sections.

First we start from algebraic modular forms.  
Let $B$ be the definite quaternion algebra over $\Q$ with prime discriminant $p$.
For a natural number $n$, define a positive definite quaternion hermitian form $h(x,y)$ of 
$B^n$ by 
\[
h(x,y)=\sum_{i=1}^{n}x_i\overline{y_i} \qquad x=(x_i), \quad y=(y_i) \in B^n,
\]
where $\overline{*}$ is the main involution of $B$. 
This is the unique positive definite quaternion hermitian form on $B^n$ up to 
base change of $B^n$ over $B$ (See \cite{shimura} Lemma 4.4). For $g=(g_{ij})\in M_n(B)$, we put 
$g^*=\,^{t}\overline{g}=(\overline{g_{ji}})$. 
We denote by $G$ the group of quaternion hermitian similitudes of degree $n$ 
defined by 
\[
G=\{g\in M_n(B); gg^*=n(g)1_n, n(g)\in \Q^{\times}_{+}\}.
\]
We denote by $G_A$ the adelization of $G$, and by $G_v$ 
the local factor of $G_A$ at a place $v$.
In particular, if we define the subgroup $G^1$ of $G$ 
by taking elements with $n(g)=1$, then $G^1_{\infty}$ is 
the compact twist $Sp(n)$ of the split symplectic group 
$Sp(n,\R)\subset SL_{2n}(\R)$ of real rank $n$. 
For an irreducible representation $(\rho,V)$ 
of $G^1_{\infty}$ such that $\rho(\pm 1_n)=1$, we define 
a representation of $G_A$ by 
\begin{equation}\label{definerep}
G_A\rightarrow G_{\infty}\rightarrow G_{\infty}/\{center\}\cong G^1_{\infty}/\{\pm 1_n\}
\stackrel{\rho}{\longrightarrow} GL(V).
\end{equation}    
We denote this representation also by $\rho$ by abuse of language.
For an open subgroup $U$ of $G_A$, we define 
the space $\fM_{\rho}(U)$ of modular forms on $G_A$ 
with respect to $U$ of weight $\rho$
by 
\[
\fM_{\rho}(U)=\{f:G_A\rightarrow V; f(uga)=\rho(u)f(g) \text{ for all } g \in G_A, u\in U, a\in G\}.
\]
(The modular forms of this type are nowadays called algebraic modular forms in \cite{gross}. Classically the Brandt matrices that appeared in the theory of Eichler (and of Jacquet-Langlands) are of this sort. For higher degrees, see also \cite{ihara}, \cite{hashimoto}, \cite{hashimotoibu}.)
When $n=2$, as was shown in \cite{hashimotoibu} I Theorem 3 and II Theorem
and \cite{ibuparavector}, \cite{ibumiddle} Theorem 5.1, 
the dimension of $\fM_{\rho}(U)$ is
explicitly known for any $\rho$ and 
for any $U=U_{pg}(p)$ or $U_{npg}(p)$ corresponding to the principal genus, or the non-principal 
genus of maximal lattices in $B^2$.
For a precise definition, see the first paragraph of Section \ref{proof1}.
The irreducible representation $\rho$ of the compact group $G_{\infty}^1=Sp(2)$ corresponds 
with a Young diagram parameter $(f_1,f_2)$ with $f_1\geq f_2\geq 0$ 
with $f_1\equiv f_2 \bmod 2$ (coming from the assumption $\rho(\pm 1_2)=1$).
We write this representation by $\rho_{f_1,f_2}$ and we write 
\[
\fM_{\rho_{f_1,f_2}}(U)=\fM_{f_1,f_2}(U).
\]
Here in this paper we are interested in the case $U=U_{npg}(p)$ 
since this case has a good correspondence with paramodular forms. 
Our first problem is to decompose $\fM_{f_1,f_2}(U_{npg}(p))$ under the 
Atkin--Lehner involution for $U_{npg}(p)$ and give a dimension formula for each eigenspace. So we explain what the involution is in this case. 
Let $O$ be a maximal order of $B$. 
We denote by $\pi$ a local prime element at $p$ 
of $O_p=O\otimes_{\Z}\Z_p$. We may assume that $\pi^2=-p$. 
Then the Atkin--Lehner involution for $\fM_{f_1,f_2}(U_{npg}(p))$ is 
defined to be the action of the Hecke double coset $U_{npg}(p)\pi U_{npg}(p)
=U_{npg}(p)\pi=\pi U_{npg}(p)$.
This action gives an involution (under a natural normalization), since $\pi^2=-p$. 
The representation matrix of the action of $U_{npg}(p)\pi$ 
on $\fM_{f_1,f_2}(U_{npg}(p))$ 
as a Hecke algebra is denoted by $R_{f_1,f_2}(\pi)$.  
The $+1$ and $-1$ eigen subspaces of $\fM_{f_1,f_2}(U_{npg}(p))$ of the matrix $R_{f_1,f_2}(\pi)$ 
will be denoted by $\fM^{\pm}_{f_1,f_2}(U_{npg}(p))$, respectively. We would like to give dimension formulas for 
these spaces. 
By definition, we have 
\[
Tr(R_{f_1,f_2}(\pi))
=\dim \fM^{+}_{f_1,f_2}(U_{npg}(p))-\dim \fM^{-}_{f_1,f_2}(U_{npg}(p))
\]
and since a formula for 
\[
\dim \fM_{f_1,f_2}(U_{npg}(p))=\dim \fM^+_{f_1,f_2}(U_{npg}(p))+
\dim \fM^-_{f_1,f_2}(U_{npg}(p))
\]
has been known in \cite{hashimotoibu} II Theorem and \cite{ibuparavector}
(and also reproduced in section \ref{appendix}),  
all we should do is to give a formula for $Tr(R_{f_1,f_2}(\pi))$.
This will be given below as Theorem \ref{compactdim}.

Before stating the result, we explain some notation that we need in the formula. 

For any $g \in G_{\infty}$, the characteristic polynomial $\phi(x)$ 
of degree $4$ of the image of the embedding 
$g \in G_{\infty}\subset M_2(B\otimes_{\Q}\R)\subset M_4(\C)$ 
is said to be a principal polynomial of $g$.
The character $Tr(\rho_{f_1,f_2}(g))$ of $g \in G_{\infty}$ depends only on the 
principal polynomial $\phi(x)$ of $g/\sqrt{n(g)}\in G^1_{\infty}=Sp(2)$, 
and by our assumption that 
$\rho_{f_1,f_2}(\pm 1_2)=1$, 
the characters for $\phi(x)$ and $\phi(-x)$ are the same. The general formula for the characters 
is well known and found in \cite{weyl} Theorem 7.8 E.
Here, we need the following principal polynomials
\begin{align*}
&\phi_2(x) =(x-1)^2(x+1)^2,  \qquad \phi_6(x) = (x^2+1)^2, 
\quad \phi_9(x) = x^4+x^2+1,  
\\ & \phi_{11}(x)  = x^4+1, \quad \phi_{13}(x)=x^4+\sqrt{5}x^3+3x^2+\sqrt{5}x+1, 
\\ & \phi_{14}(x) = (x^2+\sqrt{2}x+1)^2, \quad
\phi_{15}(x)= x^4+\sqrt{2}x^3+x^2+\sqrt{2}x+1, \\
&\phi_{16}(x)=(x^2+\sqrt{2}x+1)(x^2+1), 
\quad \phi_{17}(x) = (x^2+\sqrt{3}x+1)(x^2+1) 
\end{align*}
and denote by $\chi_i$ the character of elements corresponding to $\phi_i(\pm x)$. 
(We use this strange numbering to maintain consistency 
with our previous works.) 
The polynomials $\phi_{14}$ to $\phi_{17}$ appear only when $p=2$ or $3$. 
We denote by $h(\sqrt{-d})$ the class number of an imaginary quadratic field 
$\Q(\sqrt{-d})$ and by $B_{2,\chi}$ the second generalized Bernoulli number for 
the character $\chi$ corresponding to the real quadratic extension 
$\Q(\sqrt{p})/\Q$. By definition, we have 
\[
B_{2,\chi}=\frac{1}{f}\sum_{a=1}^{f}\chi(a)a^2-\sum_{a=1}^{f}\chi(a)a, 
\]
where $f$ is the conductor of $\chi$, i.e. $f=p$ if $p\equiv 1 \bmod 4$ and $f=4p$ if $p\equiv 3 \bmod 4$, and the latter sum
is always $0$ in this case since $\chi(-1)=1$ (\cite{bernoulli} p.54). 
Our first theorem is given below.
Here the cases $p=2$, $3$ are slightly exceptional and 
the formula in these cases will be reproduced 
as Theorem \ref{compactp2p3} in section \ref{p23} 
with a different proof.
We put $\delta_{ab}=1$ if $a=b$ and $=0$ otherwise. We define the quadratic residue symbol $\left(\frac{d}{p}\right)$ for a prime $p$ to be 
$1$, $-1$ and $0$ if $p$ splits unramified, remains prime, 
and is ramified in $\Q(\sqrt{d})$, respectively. 

\begin{thm}\label{compactdim}
We assume that $n=2$. Then an explicit formula for $Tr(R_{f_1,f_2}(\pi))$
is given for any $f_1\geq f_2\geq 0$ with $f_1\equiv f_2 \bmod 2$ as follows. \\
For $p\equiv 1 \bmod 4$, we have 
\begin{align*}
Tr R_{f_1,f_2}(\pi) & =\frac{\chi_2}{2^5\cdot 3}\left(9-2\left(\frac{2}{p}\right)\right)B_{2,\chi} 
+ \frac{h(\sqrt{-p})}{2^4}\chi_6 
\\ & + \frac{h(\sqrt{-2p})}{2^3}\chi_{11}+\frac{h(\sqrt{-3p})}{2^2\cdot 3}
\left(3+\left(\frac{2}{p}\right)\right) \chi_{9} +\frac{\delta_{p5}}{5}\chi_{13}.
\end{align*}
For $p\equiv 3 \bmod 4$ and $p>3$, we have 
\begin{align*}
Tr R_{f_1.f_2}(\pi) & =\frac{\chi_2}{2^5\cdot 3}B_{2,\chi} 
+ \frac{h(\sqrt{-p})}{2^4}\left(1-\left(\frac{2}{p}\right)\right)\chi_6
\\ & + \frac{h(\sqrt{-2p})}{2^3}\chi_{11} + 
\frac{h(\sqrt{-3p})}{2^2\cdot 3}\chi_{9}. 
\end{align*}
For $p=2$, we have 
\[
Tr(R_{f_1,f_2}(\pi))=\frac{1}{48}\chi_2+\frac{1}{16}\chi_6+\frac{1}{6}\chi_9 
+\frac{5}{16}\chi_{11}
+\frac{1}{48}\chi_{14}+\frac{1}{6}\chi_{15} + \frac{1}{4}\chi_{16}.
\]
For $p=3$, we have 
\[
Tr(R_{f_1,f_2}(\pi))=\frac{1}{24}\chi_2+\frac{1}{24}\chi_6+\frac{1}{3}\chi_9
+\frac{1}{4}\chi_{11}+\frac{1}{3}\chi_{17}.
\]
Here we have $\chi_i=Tr(\rho_{f_1,f_2}(g_i))$ for any $g_i \in G^1$ 
whose principal polynomials
are given by $\phi_i(\pm x)$ for $i=2$, $6$, $9$, $11$, $13$, $14$, $15$, $16$, $17$  defined above.
\end{thm}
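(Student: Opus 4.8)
The plan is to compute $Tr(R_{f_1,f_2}(\pi))$ directly as a Hecke-operator trace on the finite-dimensional space $\fM_{f_1,f_2}(U_{npg}(p))$, exploiting that $G^1_{\infty}=Sp(2)$ is compact so that every contributing element of $G$ is elliptic and of finite order modulo the centre. Writing a class decomposition $G_A=\coprod_i U_{npg}(p)\,g_i\,G$ into finitely many double cosets, I would identify $\fM_{f_1,f_2}(U_{npg}(p))$ with $\bigoplus_i V^{\bar\Gamma_i}$, where $\bar\Gamma_i$ is the finite image in $G^1_{\infty}/\{\pm 1_2\}$ of the unit group $G\cap g_i^{-1}U_{npg}(p)g_i$. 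The double coset $U_{npg}(p)\pi$ defines an involution permuting the classes $g_i$, and only the $\pi$-stable classes contribute to the trace; for such a class the contribution is $\tfrac{1}{|\bar\Gamma_i|}\sum_{\gamma}Tr(\rho_{f_1,f_2}(\gamma))$, the inner sum running over the torsion elements $\gamma$ lying in the nontrivial coset $\bar\Gamma_i\pi$. Reorganising over $G$-conjugacy then yields an expression
\[
Tr(R_{f_1,f_2}(\pi))=\sum_{\{\gamma\}} m(\gamma)\,Tr(\rho_{f_1,f_2}(\gamma)),
\]
the sum being over conjugacy classes of $\pi$-twisted elliptic elements and $m(\gamma)$ a mass built from local embedding numbers.

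The next step is to classify these $\gamma$ by their principal polynomial. Since $\pi^2=-p$, any $\gamma$ in the $\pi$-coset satisfies a relation forcing the CM field $\Q(\gamma)$ to be ramified at $p$; running through the finitely many degree-$4$ elliptic possibilities under this constraint produces exactly the list $\phi_2,\phi_6,\phi_9,\phi_{11}$, together with $\phi_{13}$ (which occurs only for $p=5$) and $\phi_{14}$--$\phi_{17}$ (which occur only for $p=2,3$). I would match each polynomial to its field, namely $\phi_6\leftrightarrow\Q(\sqrt{-p})$, $\phi_{11}\leftrightarrow\Q(\sqrt{-2p})$, $\phi_9\leftrightarrow\Q(\sqrt{-3p})$, while $\phi_2$ (eigenvalues $\pm1$) indexes the most degenerate, bulk contribution. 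Because $Tr(\rho_{f_1,f_2}(\gamma))$ depends only on the principal polynomial and is invariant under $x\mapsto-x$, each conjugacy type collapses to a single character value $\chi_i$ as defined before the theorem.

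The heart of the argument is the evaluation of the masses $m(\gamma)$. Here I would use the accidental isomorphism $Sp(2)\cong Spin(5)$ (equivalently $PGSp_4\cong SO(5)$) to translate the counting of $\pi$-twisted classes with a prescribed stabiliser field into a counting of optimal embeddings of the relevant quadratic orders into the genus of quinary lattices attached to $U_{npg}(p)$. The global Siegel mass of this quinary genus, via Asai's class number formula for quinary lattices, is expressed through $B_{2,\chi}$ together with the standard Bernoulli values that produce the coefficient $\tfrac{1}{2^5\cdot3}$ of the $\chi_2$ term; the elliptic embedding numbers for the ramified imaginary quadratic fields supply the class numbers $h(\sqrt{-p})$, $h(\sqrt{-2p})$, $h(\sqrt{-3p})$. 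The type-number interpretation of Asai's formula established in the author's earlier works is what converts these quinary data into the quaternion hermitian masses for $U_{npg}(p)$ and fixes the overall normalisation. Combined with the already known formula for $\dim\fM_{f_1,f_2}(U_{npg}(p))$ recalled in Section \ref{appendix}, this determines both eigenspace dimensions.

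The main obstacle I anticipate is precisely this mass computation, in particular the local analysis at the primes $2$, $3$ and $p$. The $\pi$-twist at $p$ forces ramification and changes the local embedding numbers relative to the untwisted dimension formula, and the resulting local densities depend on $p\bmod 4$ and on $\left(\frac{2}{p}\right)$; tracking these carefully is what produces the congruence-dependent factors $9-2\left(\frac{2}{p}\right)$, $1-\left(\frac{2}{p}\right)$ and $3+\left(\frac{2}{p}\right)$, as well as the distinct behaviour of the $\chi_6$ term for $p\equiv1\bmod4$ versus $p\equiv3\bmod4$. Because the $2$- and $3$-adic embedding numbers degenerate for $p=2,3$, introducing the extra contributions $\phi_{14}$--$\phi_{17}$, I would treat those two primes separately by the more direct method of Theorem \ref{compactp2p3}, leaving the argument above to cover all $p\neq2,3$.
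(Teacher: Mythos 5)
Your proposal follows essentially the same route as the paper: reduce to the trivial representation via the factorization of the trace through the character $\chi_{f_1,f_2}(\phi)$ (Corollary \ref{maincor}), transport the count to $SO(5)$ via $G/\Q^{\times}1_2\cong SO(W)$, invoke Asai's class number formula for the quinary genus of determinant $2p$ together with the author's type-number interpretation, and treat $p=2,3$ by the direct unit-group computation of Section \ref{p23}. The one place where your plan diverges is the step you yourself flag as the main obstacle: you propose to evaluate the masses $m(\gamma)$ by computing local optimal-embedding numbers for the $\pi$-twisted classes, which would require fresh local density calculations at $2$, $3$ and $p$. The paper avoids this entirely by the identity $Tr(R_{0,0}(\pi))=2T-H$ from \cite{ibutypenumber}: since Asai's formula gives $2T$ conjugacy class by conjugacy class and the contribution of each class to $H$ is already known from \cite{hashimotoibu}, the $\phi$-part of the trace is obtained by pure subtraction (Lemma \ref{quinaryvshermitian} supplying the matching of principal polynomials on the two sides), with no new local computation; the congruence factors such as $9-2\left(\frac{2}{p}\right)$ and $1-\left(\frac{2}{p}\right)$ simply fall out of Asai's case distinctions. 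So your outline is correct and would work, but carrying out your version of the mass computation from scratch would be substantially harder than the bookkeeping the paper actually performs.
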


The proof of Theorem \ref{compactdim} will be given in section \ref{proof1}. 

The explicit value of $\chi_i$ for each $(f_1,f_2)$ for the case $p\neq 2$, $3$ 
is given as follows 
as can be easily deduced from the classical result in \cite{weyl} Theorem 7.8 E
(See also section \ref{appendix}).
For the formulas for $\chi_{14}$ to $\chi_{17}$, see section 5. 
We use notation 
$[a_0,\ldots,a_m;m]_{b}$ that means the number $a_i$  
when $b\equiv i \bmod m$. 
\begin{align*}
\chi_2 & = (-1)^{f_1}\frac{(f_1+2)(f_2+1)}{2}, \\
\chi_6 & =  \frac{(-1)^{(f_1+f_2)/2}}{2}
\times \left\{\begin{array}{ll} f_1+2 & \text{ if } f_2\equiv 0 \bmod 2, \\
-(f_2+1) & \text{ if } f_2\equiv 1 \bmod 2, 
\end{array}\right.
\\
\chi_{9} & = \left\{
\begin{array}{ll}
[1,0,0,-1,0,0;6]_{f_2} & \text{ if } f_1-f_2\equiv 0 \bmod 6, \\{} 
[-1,1,0,1,-1,0;6]_{f_2} & \text{ if } f_1-f_2\equiv 2 \bmod 6, \\{} 
[0,-1,0,0,1,0:6]_{f_2}  & \text{ if } f_1-f_2\equiv 4 \bmod 6,
\end{array} \right.  
\\
\chi_{11} & = \left\{
\begin{array}{ll}
(-1)^{(f_1-f_2)/4}[1,-1,0,0;4]_{f_2} & \text{ if } f_1-f_2\equiv 0 \bmod 4, \\
(-1)^{(f_1-f_2-2)/4}[0,1,-1,0;4]_{f_2} & \text{ if }f_1-f_2\equiv 2 \bmod 4, 
\end{array}\right. \\
\chi_{13} & = 
\left\{\begin{array}{ll}
[1,2,1,0,0,-1,-2,-1,0,0;10]_{f_2} & \text{ if } f_1- f_2\equiv 0 \bmod 10, \\{}
[2,0,-1,1,0,-2,0,1,-1,0;10]_{f_2} & \text{ if } f_1- f_2\equiv 2 \bmod 10, \\{}
[-2,-2,2,2,0,2,2,-2,-2,0;10]_{f_2} & \text{ if } f_1- f_2 \equiv 4 \bmod 10, \\{}
[-1,1,0,-2,0,1,-1,0,2,0;10]_{f_2} & \text{ if } f_1- f_2\equiv 6 \bmod 10, \\{}
[0,-1,-2,-1,0,0,1,2,1,0;10]_{f_2} & \text{ if } f_1-f_2 \equiv 8 \bmod 10.
\end{array}\right.
\end{align*}
By definition of $\fM^{\pm}_{f_1,f_2}(U_{npg}(p))$, we have 
\begin{align}
\label{compactplus}
\dim \fM^+_{f_1,f_2}(U_{npg}(p)) & =
\frac{1}{2}\biggl(\dim \fM_{f_1,f_2}(U_{npg}(p))+Tr(R_{f_1,f_2}(\pi))\biggr),\\
\label{compactminus}
\dim \fM^-_{f_1,f_2}(U_{npg}(p)) & = \frac{1}{2}\biggl(
\dim \fM_{f_1,f_2}(U_{npg}(p))-Tr(R_{f_1,f_2}(\pi))\biggr).
\end{align}
But the dimension for $\dim \fM_{f_1,f_2}(U_{npg}(p))$ 
is explicitly known for any $\rho_{f_1,f_2}$ 
as given in \cite{hashimotoibu} II 
(reproduced in Theorem \ref{nonprincipaldim} in 
section \ref{appendix}). 
So as a corollary of the above theorem, we have 
explicit dimension formulas for $\dim \fM^{+}_{f_1,f_2}(U_{npg}(p))$ and 
$\dim \fM^{-}_{f_1,f_2}(U_{npg}(p))$. 
(By the same sort of calculation given in this paper, we can give an
explicit  formula also for $\fM_{f_1,f_2}^{\pm}(U_{pg}(p))$, 
but we omit it here.)

Now we proceed to our next theme. 
Since $G^1_{\infty}=Sp(2)$ is the compact twist of the split symplectic group 
$Sp(2,\R)\subset SL_4(\R)$ of real rank $2$,
we may expect a nice correspondence between algebraic 
modular forms on $G_A$ and 
Siegel cusp forms of degree $2$. (A general principle by Langlands, and for this special case 
asked also by Y. Ihara \cite{ihara}.) 
An explicit correspondence for the 
non-principal genus for $n=2$ was conjectured in our previous works 
\cite{ibuparamodular}, \cite{ibuparavector}, \cite{ibukitayama}
with precise comparison of explicit dimension formulas, 
and this conjecture has been proved by van Hoften in \cite{vanhoften} Theorem 3 and
by R\"{o}sner and Weissauer in \cite{weissauer} Proposition 12.3, independently by 
a completely different method. (Other parahoric cases 
different from the above case 
have been conjectured in \cite{ibumiddle},\cite{ibueuler},\cite{hashimotoibudimII},
but this is another story.) 
The corresponding Siegel cusp forms here 
are so called paramodular forms. 
Now there also exists the Atkin--Lehner type involution on 
paramodular forms of level $p$. 
Recently, Dummigan, Pacetti, Rama and Tornar\'ia generalized 
the above correspondence to the case between 
paramodular forms and algebraic modular forms with given sign of the Atkin--Lehner 
involution (See \cite{dummigan} Theorem 9.6 etc.). (Their theorem includes 
some general level cases, but here we are concerned only with 
prime level. See also \cite{ladd}.) We will explain more details below.
For any positive integer $N$, we denote by $K(N)$ the paramodular subgroup of $Sp(2,\Q)$ of level $N$ defined by 
\[
K(N)=Sp(2,\Q)\cap \begin{pmatrix} \Z & N\Z & \Z & \Z 
\\ \Z & \Z & \Z & N^{-1}\Z \\
\Z & N\Z & \Z & \Z \\
N\Z & N\Z & N\Z & \Z 
\end{pmatrix}.
\]
Let $H_n$ be the Siegel upper half space of degree $n$.  
For any $g=\begin{pmatrix} A & B \\ C & D \end{pmatrix} \in Sp(2,\R)$ and 
a $V_{k,j}$-valued function $F$ of $Z \in H_2$,
we put 
\[
F|_{k,j}[g]=\rho_{k,j}(CZ+D)^{-1}F(gZ),
\]
where $(\rho_{k,j},V_{k,j})$ is the irreducible representation 
$\det^k Sym(j)$ of $GL_2(\C)$ and $Sym(j)$ is the symmetric tensor representation of degree $j$. (When compared with algebraic modular forms explained  
later, we will put $f_1=k+j-3$ and $f_2=k-3$, assuming $k\geq 3$.) 
We denote by $A_{k,j}(K(N))$ the space of paramodular forms belonging to $K(N)$ of weight $\rho_{k,j}$,
and by $S_{k,j}(K(N))$ its subspace of cusp forms. 
By definition, $S_{k,j}(K(N))$ means the vector space of 
$V_{k,j}$-valued holomorphic functions on $H_2$ such that $F|_{k,j}[\gamma]=F$
for any $\gamma \in K(N)$ and that vanish at all the cusps.
When $j$ is odd, we always have $A_{k,j}(K(N))=0$ by $\rho_{k,j}(-1_2)=(-1)^{2k+j}=-1$.
When $j=0$, we simply write $A_{k,0}(K(N))=A_k(K(N))$ and $S_{k,0}(K(N))=S_{k}(K(N))$. 
The formula for $\dim S_{k,j}(K(N))$ is known 
for square free $N$ for any $k\geq 3$, $j\geq 0$ 
(See \cite{ibuparamodular}, \cite{ibuweightthree}  for $j=0$, $N=$prime,
\cite{ibuparavector} for $j>0$, $k>4$, $N=$prime,  \cite{ibukitayama}  
for square free $N$ with $j=0$, $k\geq 3$ and $j>0$ and $k>4$,  
and by Dan Petersen (colloquial communication) for $k=3$, $4$, $j>0$. 
The last case has been also reproved by van Hoften \cite{vanhoften}.   

For a prime $p$, we put 
\[
\eta=\frac{1}{\sqrt{p}}\begin{pmatrix} 0 & 0 & 0 & -1 \\ 0 & 0 & -1 & 0 \\
0 & p & 0 & 0 \\
p & 0 & 0 & 0 
\end{pmatrix}.
\]
Then $\iota:F\rightarrow F|_{k,j}[\eta]$ induces an involution on $S_{k,j}(K(p))$. (This can be regarded also as the action of
the Hecke operator associated with $K(p)\eta$.) 
We denote by $S_{k,j}^{\pm}(K(p))\subset S_{k,j}(K(p))$
the eigenspaces of $\iota$ belonging to eigenvalues $+1$ and $-1$, respectively. 
To adjust the lifting part in the correspondence, we need the space 
$S_k(\Gamma_0(p))$ of 
elliptic cusp forms of weight $k$ belonging to the group 
\[
\Gamma_0(p)=SL_2(\Z)\cap \begin{pmatrix} \Z & \Z \\ p\Z& \Z\end{pmatrix}.
\]
We denote by $S_{j+2}^{\pm}(\Gamma_0(p))$ 
the eigenspaces of $S_{j+2}(\Gamma_0(p))$ belonging to the eigenvalues $+1$ and $-1$ of 
the Atkin--Lehner involution $W_p$ defined by 
\[
f(z)|_{j+2}W_p=p^{-(j+2)/2}z^{-(j+2)}f(-1/pz)
\]
on $S_{j+2}(\Gamma_0(p))$. 
We put $S_{j+2}^{\pm,new}(\Gamma_0(p))=S_{j+2}^{\pm}(\Gamma_0(p))\cap S_{j+2}^{new}(\Gamma_0(p))$
where $S_{j+2}^{new}(\Gamma_0(p))$ is the space of new forms. 

Now by Theorem \ref{compactdim} and 
the above mentioned result of Dummigan, Pacetti, Rama, Tornaria in \cite{dummigan}, 
together with \cite{vanhoften}, \cite{weissauer}, and other results, 
we have the following explicitly calculable formula for $\dim S_{k,j}^{\pm}(K(p))$. 
\begin{thm}\label{paramodular}
Let $p$ be any prime. For  
$k\geq 3$ and even $j\geq 0$, we have an explicit formula for 
$\dim S_{k,j}^{\pm}(K(p))$. It is given by 
\begin{align*}
& \dim S_{k,j}^+(K(p)) = \dim S_{k,j}(Sp(2,\Z))+\dim \fM^{-}_{j+k-3,k-3}(U_{npg}(p))
\\
& -\dim S_{j+2}^{new,+}(\Gamma_0(p))\times \dim S_{2k+j-2}(SL_2(\Z)) 
\\
& \dim S_{k,j}^{-}(K(p)) = \dim S_{k,j}(Sp(2,\Z))-\delta_{j0}\dim S_{2k-2}(SL_2(\Z))
-\delta_{j0}\delta_{k3} \\ 
& +\dim \fM_{j+k-3,k-3}^{+}(U_{npg}(p))
-\dim S_{j+2}^{new,-}(\Gamma_0(p))\times S_{2k+j-2}(SL_2(\Z)).
\end{align*}
where the main part of RHS is explicitly given by Theorem \ref{compactdim}, 
Theorem \ref{compactp2p3}, Theorem \ref{nonprincipaldim},
and the other parts are explained below.
\end{thm}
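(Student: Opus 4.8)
The plan is to observe that the two asserted formulas are equivalent to the single pair of statements that $\dim S_{k,j}^+(K(p))+\dim S_{k,j}^-(K(p))$ equals the total dimension $\dim S_{k,j}(K(p))$, which is already known from the works cited in the introduction, and that $\dim S_{k,j}^+(K(p))-\dim S_{k,j}^-(K(p))$ equals the trace $Tr(\iota\mid S_{k,j}(K(p)))$ of the Atkin--Lehner involution. The real content is therefore the trace, which I would compute summand by summand along an Arthur-type decomposition of the space, using the sign-refined correspondence of \cite{dummigan} to convert the stable (general type) part into the quaternionic quantity $Tr(R_{f_1,f_2}(\pi))$ of Theorem \ref{compactdim} with $f_1=j+k-3$ and $f_2=k-3$.

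First I would record the decomposition of $S_{k,j}(K(p))$ into the part old from level one, the Saito--Kurokawa (CAP) part, the Yoshida (endoscopic) part, and the remaining stable part, together with the analogous decomposition of $\fM_{f_1,f_2}(U_{npg}(p))$. On the stable part, the theorem of van Hoften \cite{vanhoften} and R\"osner--Weissauer \cite{weissauer}, refined by Dummigan--Pacetti--Rama--Tornar\'ia \cite{dummigan}, provides a Hecke-equivariant isomorphism that matches Atkin--Lehner eigenspaces with reversed sign; concretely, the $\pm$ eigenspace on the paramodular side corresponds to the $\mp$ eigenspace on the quaternionic side. Since $\dim \fM^-_{f_1,f_2}(U_{npg}(p))-\dim \fM^+_{f_1,f_2}(U_{npg}(p))=-Tr(R_{f_1,f_2}(\pi))$ by \eqref{compactplus} and \eqref{compactminus}, this sign reversal is exactly what produces the main term $\dim \fM^{\mp}_{j+k-3,k-3}(U_{npg}(p))$ in the formula for $\dim S^{\pm}_{k,j}(K(p))$.

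Next I would pin down the correction terms. The forms old from level one contribute $\dim S_{k,j}(Sp(2,\Z))$ to each of the two eigenspaces, since each level-one cusp form gives rise to a one-dimensional contribution in each eigenspace of $\iota$ at level $p$; this accounts for the common summand $\dim S_{k,j}(Sp(2,\Z))$. The Yoshida part is parametrised by pairs consisting of a newform in $S_{j+2}^{new,\pm}(\Gamma_0(p))$ and a level-one form in $S_{2k+j-2}(SL_2(\Z))$, its Atkin--Lehner sign being governed by the sign of $W_p$ on the level-$p$ factor; comparing its multiplicity on the two sides of the correspondence yields the subtracted products $\dim S_{j+2}^{new,\pm}(\Gamma_0(p))\times\dim S_{2k+j-2}(SL_2(\Z))$. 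The Saito--Kurokawa part occurs only in scalar weight, which is the source of the $\delta_{j0}$ factor; its parametrisation by $S_{2k-2}(SL_2(\Z))$ and its placement in the $-1$ eigenspace give the term $-\delta_{j0}\dim S_{2k-2}(SL_2(\Z))$, while the isolated $-\delta_{j0}\delta_{k3}$ is a single low-weight exceptional contribution that I would treat by hand. Finally I would verify that summing the two eigenspace formulas reproduces the known value of $\dim S_{k,j}(K(p))$, which both validates the signs and fixes the exceptional constants.

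The hard part will be the bookkeeping of the non-stable contributions: establishing the exact multiplicity with which each Saito--Kurokawa and Yoshida packet appears in $\fM_{f_1,f_2}(U_{npg}(p))$ as opposed to $S_{k,j}(K(p))$, determining the precise Atkin--Lehner eigenvalue carried by each lift (including the archimedean condition deciding which members of an endoscopic packet are holomorphic), and controlling the weight-three scalar anomaly responsible for $\delta_{j0}\delta_{k3}$. Once these local sign computations are matched against the reversed-sign dictionary of \cite{dummigan}, the global formula follows by linear bookkeeping from Theorem \ref{compactdim} together with the classical trace of $W_p$ on $S_{j+2}^{new}(\Gamma_0(p))$.
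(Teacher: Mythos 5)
Your overall strategy --- decompose both $S_{k,j}(K(p))$ and $\fM_{j+k-3,k-3}(U_{npg}(p))$ into old, Saito--Kurokawa, Yoshida and stable parts, transfer the stable part through the sign-reversed dictionary of \cite{dummigan}, and then correct for the lifts --- is the same as the paper's, so the issue is whether your bookkeeping of the non-stable parts (which you yourself flag as the hard part) is right. It is not, in two places. First, your claim that ``each level-one cusp form gives rise to a one-dimensional contribution in each eigenspace of $\iota$ at level $p$'' is false for the Saito--Kurokawa part of $S_{k,j}(Sp(2,\Z))$: by the local classification (types (I) versus (IIb), \cite{robertschmidt} Table A.15), a level-one form of general type has two $K(p)$-fixed vectors, one of each Atkin--Lehner sign, whereas a level-one Saito--Kurokawa lift has a \emph{unique} $K(p)$-fixed vector, carrying the plus sign. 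That is what produces the correction $-\delta_{k,\mathrm{even}}\dim S_{2k-2}(SL_2(\Z))$ in the minus eigenspace; your stated justification (``its placement in the $-1$ eigenspace'') has the phenomenon backwards even though the term you write down is the right one.

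Second, and more seriously, you omit the Ihara lift. For odd $k$ there is no Saito--Kurokawa lift from $S_{2k-2}(SL_2(\Z))$ into $S_k(Sp(2,\Z))$, so the old-form correction just described vanishes; yet the theorem carries $-\delta_{j0}\dim S_{2k-2}(SL_2(\Z))$ with no parity restriction. The missing ingredient is that for odd $k$ the space $\fM^{+}_{k-3,k-3}(U_{npg}(p))$ contains an injective image of $S_{2k-2}(SL_2(\Z))$ (the Ihara lift, \cite{vanhoften} Theorem 8.2.1 (3) and \cite{weissauer} Proposition 12.2, with sign from \cite{dummigan} Theorem 10.1 (ii)) corresponding to no paramodular cusp form, which must be subtracted from the compact side. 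As written, your accounting would yield $-\delta_{j0}\delta_{k,\mathrm{even}}\dim S_{2k-2}(SL_2(\Z))$ rather than $-\delta_{j0}\dim S_{2k-2}(SL_2(\Z))$, i.e.\ a wrong formula for all odd $k$ with $j=0$. You also never check the Gritsenko lifts from $S_{2k-2}^{(-1)^k,\mathrm{new}}(\Gamma_0(p))$ against their compact counterparts; the paper verifies that these land in $S_k^{(-1)^k}(K(p))$ and in $\fM^{(-1)^{k+1}}_{k-3,k-3}(U_{npg}(p))$ respectively, so that they match under the reversed-sign comparison and contribute no correction term --- a verification your argument needs but does not supply. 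Finally, the term $-\delta_{j0}\delta_{k3}$ is not an exceptional constant to be fixed ``by hand'' or by the consistency check with the total dimension: it is the constant function in $\fM_{0,0}(U_{npg}(p))$, which is Atkin--Lehner plus and corresponds to no cusp form.
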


In particular, $S_3^+(K(p))$ has a geometric meaning 
on the moduli of the Kummer surfaces associated to 
$(1,p)$ polarization (See \cite{gritsenkohulek} Theorem 1.5), 
and by using the above formula and estimating bounds of the class numbers and 
the Bernoulli numbers, 
we can give the complete list of 
primes such that $S_3^+(K(p))=0$ in Proposition \ref{weight3}. 
This includes a partial result in \cite{breedingpooryuen} and \cite{gritsenkopooryuen}
for primes, though they also treated composite levels. 

The meaning of the dimensional relations in Theorem 
\ref{paramodular} and its proof will be explained later in section \ref{proof2}.
Here we will explain why this is an explicit formula. 
The formula for $\dim \fM^{\pm}(U_{npg}(p))$ is deduced by Theorem \ref{compactdim}
and \eqref{compactplus} and \eqref{compactminus} by virtue of 
\cite{hashimotoibu} (reproduced as Theorem \ref{nonprincipaldim} in section 
\ref{appendix}). 
The formula for $\dim S_{k,j}(Sp(2,\Z))$ is in 
\cite{igusa}, \cite{igusa2}, \cite{tsushima}, \cite{petersen}
(not known for the case $k=2$ and big $j>0$ but this case   
is also excluded in Theorem \ref{paramodular}). 
The dimension 
$\dim S_{j+2}^{\pm}(\Gamma_0(p))$ is essentially in \cite{yamauchi} 1.6 Theorem, 
and easily obtained by the formula  
\eqref{dimgamma0}, \eqref{dimgamma0pm}, \eqref{dimgamma0p2}, \eqref{dimgamma0p3} explained below (See also \cite{martin}).
The formula for $\dim S_k(SL_2(\Z))$ is well known and given by \eqref{dimsl} below. 
So the above Theorem
\ref{paramodular} gives a really calculable formula 
for any given prime $p$, $k\geq 3$ and even $j\geq 0$.

We review the formula for $S_{j+2}^{\pm,new}(\Gamma_0(p))$ 
in \cite{yamauchi} for readers' convenience
(see also \cite{martin}). 
For any prime $p$ and even $k\geq 2$, as well known we have 
\begin{align}\label{dimgamma0}
& \dim S_{k}^{new}(\Gamma_0(p)) =
\dim S_{k}^{+,new}(\Gamma_0(p))+\dim S_{k}^{-,new}
(\Gamma_0(p)) \\
& \notag = \frac{(p-1)(k-1)}{12}+\frac{1}{4}(-1)^{k/2+1}\left(1-\left(\frac{-1}{p}\right)\right)
\\ \notag & \qquad \qquad \qquad \qquad
+\frac{1}{3}[-1,0,1;3]_k\left(1-\left(\frac{-3}{p}\right)\right)-\delta_{k2}.
\end{align}
For $p>3$ and even $k\geq 2$, we have 
\begin{align}
& \dim S_{k}^{+,new}(\Gamma_0(p))-\dim S_{k}^{-,new}(\Gamma_0(p))  = 
(-1)^{k/2}\frac{a_p\cdot h(\sqrt{-p})}{2}+\delta_{k2},
\label{dimgamma0pm}
\end{align}
where $h(\sqrt{-p})$ is the class number of $\Q(\sqrt{-p})$ and 
$a_p=1$ if $p\equiv 1 \bmod 4$, $a_p=2$ if $p\equiv 7 \bmod 8$, and 
$a_p=4$ if $p\equiv 3 \bmod 8$. 
We can also write $a_p h(\sqrt{-p})=h(-p)+h(-4p)$, where $h(-d)$ denotes the class number of quadratic order of discriminant $-d$ (not necessarily maximal), regarding $h(-d)=0$ if $-d\equiv 2,3 \bmod 4$. \\
The case $p=2$ and $3$ for even $k\geq 2$ is given by 
\begin{align}
\label{dimgamma0p2}
\dim S_k^{+,new}(\Gamma_0(2))-\dim S_{k}^{-,new}(\Gamma_0(2))
& =
\frac{(-1)^{k/2}-(-1)^{(k-4)(k-2)/8}}{2}+\delta_{2k},
\\
\label{dimgamma0p3}
\dim S_k^{+,new}(\Gamma_0(3))-\dim S_k^{-.new}(\Gamma_0(3))
& =
\delta_{2k}+\left\{
\begin{array}{ll}
-1 & \text{ if } k\equiv 2,6 \bmod 12, \\
0 & \text{ if } k\equiv 4,10 \bmod 12, \\
1 & \text{ if } k\equiv 0, 8 \bmod 12.
\end{array}\right.
\end{align}
It is also well known that 
\begin{equation}\label{dimsl}
\dim S_k(SL_2(\Z))=\frac{k-1}{12}+\frac{1}{4}(-1)^{k/2}+\frac{1}{3}[1,0,-1;3]_k-\frac{1}{2}+\delta_{k2}.
\end{equation}

As a special case of Theorem \ref{paramodular} for $(k,j)=(3,0)$, 
we have  
\[
\dim S_{3}^{+}(K(p))=H-T \quad \dim S_3^{-}(K(p))=T-1
\]
where $H$ is the class number and $T$ is the ($G$-)type number of the non-principal genus $\cL_{npg}$ in $B^2$ defined in \cite{ibutypenumber} p. 370.  The formula and numerical tables for $H$ and $T$ were given in \cite{hashimotoibu} II and \cite{ibuquinary} p.218.
Numerical examples of $\dim S_4^{\pm}(K(p))$ for many primes $p$ 
have been already given in 
\cite{poor} Table 4 and of course our results coincide with these values.
More numerical tables will be given in 
section \ref{example}. 

\section{Proof of Theorem \ref{paramodular}}\label{proof2}
In this section, we prove Theorem \ref{paramodular}, since 
this is much shorter than the proof of Theorem \ref{compactdim}.
We also explain the meaning of the dimensional relations in the theorem. 
Of course this relation can be read as a reflection of the Hecke equivariant 
bijection, but since this is obvious by \cite{vanhoften}, \cite{weissauer} and 
\cite{dummigan}, 
we do not explain such details. 

The local completions of groups $K(p)$ and $Sp(2,\Z)$ are maximal compact subgroups of 
$Sp(2,\Q_p)$ and there is no inclusion relation between $K(p)$ and $Sp(2,\Z)$ 
even if we take conjugacy.
But still we may consider paramodular forms in $S_{k,j}(K(p))$ 
coming from $Sp(2,\Z)$ as images 
of $S_{k,j}(Sp(2,\Z))$ and 
$S_{k,j}(Sp(2,\Z))|_{k}[\eta]=S_{k,j}(\eta^{-1}Sp(2,\Z)\eta)
\cong S_{k,j}(Sp(2,\Z))$ by trace operators. These are old forms in 
the sense of \cite{robertschmidt} and also explained in the much earlier paper \cite{ibuparamodular}. In general, old forms 
of $Sp(2,\Z)$ doubly appear in $S_{k,j}(K(p))$, but the 
Saito-Kurokawa lift from $S_{2k-2}(SL_2(\Z))$ 
(that exists only when $j=0$ and $k$ is even) 
exceptionally appears only once. 
Indeed, by \cite{royschmidtyi} Table 1 and \cite{schmidt}, \cite{schmidtpacket}, 
the possibility of local 
representations at $p$ which have both $Sp(2,\Z_p)$ fixed vectors and 
$K(p)$ fixed vectors are (I) and (IIb) 
in the notation of \cite{robertschmidt}. 
Here (IIb) corresponds to the Saito--Kurokawa type (P), where $K(p)$ fixed vector is 
unique up to scalar with plus Atkin--Lehner sign, 
while (I) corresponds to the general type (G), where there are two 
$K(p)$ fixed vectors, one is of Atkin--Lehner plus and the other is 
of Atkin--Lehner minus. This can be seen in \cite{robertschmidt} Table A.15.
(See also \cite{gritsenko}, \cite{schmidtiwahori}, \cite{schmidtSK}.)
So the dimensions of new forms in $S_{k,j}^{\pm}(K(p))$ in the sense of 
\cite{robertschmidt} is given by 
\begin{align*}
& \dim S_{k,j}^{+}(K(p)) -\dim S_{k,j}(Sp(2,\Z))  \\
& \dim S_{k,j}^{-}(K(p)) -(\dim S_{k,j}(Sp(2,\Z))-
\delta_{k,even}\dim S_{2k-2}(SL_2(\Z))). 
\end{align*}
Now, \cite{vanhoften} and \cite{weissauer} claim that the general part
(i.e. non-lift part) of  $\fM_{k+j-3,k-3}(U_{npg}(p))$ and 
that of new forms  
of $S_{k,j}(K(p))$ have Hecke equivariant bijection.
By \cite{dummigan} Theorem 10.1 (i), the general new part of 
$S_{k,j}^{\pm}(K(p))$ corresponds to 
the general part of $\fM_{k+j-3,k-3}^{\mp}(U_{npg})$
(where double sign corresponds). So we have to see the Atkin--Lehner signs 
of the remaining lifting part.
When $k$ is odd, then there is no Saito Kurokawa lift from $S_{2k-2}(SL_2(\Z))$ 
to $S_k(Sp(2,\Z))$, but there exists an Ihara lift from $S_{2k-2}(SL_2(\Z))$ 
to algebraic modular forms 
and this lift is injective to $\fM_{k-3,k-3}(U_{npg}(p))$ by  
\cite{vanhoften} Theorem 8.2.1 (3) and \cite{weissauer} Proposition 12.2
(, though not known if this lift is exactly as constructed in \cite{ihara}, 
\cite{iharaibu}, \cite{ibuparavector}).  
By virtue of \cite{dummigan} Theorem 10.1 (ii), we see that this appears in 
$\fM_{k-3,k-3}^{+}(U_{npg}(p))$. Also, when $k=3$ and $j=0$, 
$\fM_{0,0}(U_{npg}(p))$ contains a constant function of $G_A$ which 
belongs to Atkin--Lehner plus, and 
this does not correspond to a cusp form.
So we must subtract 
\[
\delta_{k,odd}\dim S_{2k-2}(SL_2(\Z))+\delta_{j0}\delta_{k3}.  
\]
from $\fM_{k+j-3,k-3}^{+}(U_{npg})$.
Now by \cite{vanhoften} Theorem 8.2.1 (3) and \cite{weissauer} Proposition 12.1, 
there is an injective Yoshida lifting from 
$S_{j+2}^{new}(\Gamma_0(p))\times S_{2k+j-2}(SL_2(\Z))$ to 
$\fM_{k+j-3,k-3}(U_{npg}(p))$. 
The signature of the image of the Yoshida lift is given in 
\cite{dummigan} Theorem 10.1 (iii). This gives the following injection
\begin{align*}
& S_{j+2}^{+,new}(\Gamma_0(p))\times S_{2k+j-2}(SL_2(\Z))\hookrightarrow 
\fM_{k+j-3,k-3}^{-}(U_{npg}(p)), \\
&  S_{j+2}^{-,new}(\Gamma_0(p))\times S_{2k+j-2}(SL_2(\Z))
\hookrightarrow \fM_{k+j-3,k-3}^{+}(U_{npg}(p)).
\end{align*}
Note that there is no Yoshida lift in $S_{k,j}(K(p))$ by \cite{schmidtpacket}
Lemma 2.2.1.
The only remaining part now is a lift from $S_{2k-2}^{\pm,new}(\Gamma_0(p))$. 
For paramodular forms, this is the Gritsenko lift 
(paramodular version of Saito-Kurokawa lift) from 
$S_{2k-2}^{\epsilon,new}(\Gamma_0(p))$ to $S_k(K(p))$. 
Here we must have $\epsilon=(-1)^k$, since originally the elliptic modular forms here
should correspond with Jacobi forms of index $p$ of weight $k$ and the 
sign of the functional equation should be $-1=(-1)^{k-1}\epsilon$.  
The Atkin--Lehner sign of Gritsenko lifts is $(-1)^k$ (\cite{gritsenko} (12), 
\cite{schmidtSK} Theorem 5.2 (i)). 
So we have the following injections.
\begin{align*}
& S_{2k-2}^{+,new}(\Gamma_0(p))\hookrightarrow S_k^{+}(K(p)) \quad 
\text{ for even $k$},\\
& S_{2k-2}^{-,new}(\Gamma_0(p))\hookrightarrow S_k^{-}(K(p)) \quad 
\text{ for odd $k$}.
\end{align*}
(By the way, the fact that the sign of the Saito-Kurokawa lift from 
$S_{2k-2}(SL_2(\Z))$ to $S_{k}(K(p))$ is plus can be proved also 
by \cite{gritsenko} Theorem 3 Corollary.)
For the compact twist, we have also a lift from $S_{2k-2}^{(-1)^k,new}(\Gamma_0(p))$
to $\fM_{k-3,k-3}(U_{npg}(p))$ by \cite{weissauer} Proposition 12.2. 
The Atkin--Lehner sign is determined by \cite{dummigan} Theorem 10.1 (ii), and we have 
\begin{align*}
& S_{2k-2}^{+,new}(\Gamma_0(p))\hookrightarrow \fM_{k-3,k-3}^{-}(U_{npg}(p)) \quad 
\text{ for even $k$,} \\
& S_{2k-2}^{-,new}(\Gamma_0(p))\hookrightarrow \fM_{k-3,k-3}^{+}(U_{npg}(p))\quad 
\text{ for odd $k$}.
\end{align*}
Note that the sign is reversed compared with paramodular case. 
So in the comparison between dimensions of 
$S_{k,j}^{\pm}(K(p))$ and $\fM_{k+j-3,k-3}^{\mp}(U_{npg}(p))$,
this part apparently need not appear.
Gathering above considerations, Theorem 1.2 is proved.

\section{Proof of Theorem \ref{compactdim}} 
\label{proof1}
First we explain the definition of the non-principal genus. 
From now on, we consider only the case $n=2$ for simplicity.
Let $B$ be the definite quaternion algebra of 
prime discriminant $p$ as before. We fix a maximal order $O$ of $B$.
A choice of $O$ is not essential but we fix $O$ that contains 
a prime element $\pi$ with 
$\pi^2=-p$ for the sake of simplicity (such $O$ always exists).
A $\Z$ lattice $L\subset B^2$ (a free $\Z$ submodule of $B^2$ of rank $8$) 
is said to be a left $O$ lattice if $L$ is a left $O$ module.
As in Shimura \cite{shimura}, we define a norm $N(L)$ of $L$ as 
the two sided $O$ ideal spanned by $h(x,y)$ for $x$, $y\in L$. 
A left $O$ lattice $L$ is said to be maximal if  any $O$-lattice $M$ 
with $L\subset M$
and $N(M)=N(L)$ satisfies $M=L$. 
For any left $O$ lattice $L$ and any prime $q$, we write 
$L_q=L\otimes_{\Z}\Z_q$. 
A genus $\cL$ is a set of left $O$ lattices in $B^2$ such that for any 
$L_1$, $L_2 \in \cL$, we have $L_{1,q}=L_{2,q} g_q$ for some $g_q\in G_q$ for any prime $q$.
If a lattice in $\cL$ is maximal, then so is any other lattice in $\cL$.   
The set of norm $N(L)$ for $L\in \cL$ is determined up to 
a multiplication by $\Q^{\times}$. In our case of discriminant $p$,
there are two genera of maximal lattices, one is called the principal genus $\cL_{pg}$ containing 
$O^2$, and the other  is the one called non-principal genus $\cL_{npg}$
containing a maximal lattice $L$ with $N(L)=\pi O=O \pi$ (See \cite{shimura} Proposition 4.6).

Now we fix any genus $\cL$.  
For $L\in \cL$, a set of lattices $\{Lg;g\in G\}\subset \cL$ is called a class. 
The number of classes in $\cL$ is called the class number of $\cL$.   
For a fixed left $O$ lattice $L\in \cL$ and any prime $q$, we put 
\[
U(L_q)=\{g_q\in G_q; L_qg_q=L_q\},
\]
and define an open subgroup $U(L)$ of $G_A$ by 
\[
U=U(L)=G_{\infty}\prod_{q}U(L_q).
\]
For any $g_A=(g_v)\in G_A$, we define a left $O$ lattice $Lg_A\subset B^2$ by 
\[
Lg_A=\bigcap_{v<\infty}(L_vg_v\cap B^2).
\]
Then by $L\rightarrow Lg_A$, we see that 
the class number $H$ is equal to the number of double cosets in  
$U\backslash G_A/G$, and if we write the representatives of double cosets as 
\begin{equation}\label{doublecoset}
G_A=\coprod_{i=1}^{H}Ug_i G \qquad (disjoint),
\end{equation}
then the set $\{Lg_i;i=1,\ldots, H\}$ gives a complete set of representatives of 
classes in $\cL$. If we put $\Gamma_i=g_i^{-1}Ug_i\cap G$, then 
these are finite groups and are (metric preserving) 
automorphism groups of $Lg_i$ for 
$1\leq i\leq H$, respectively. 
Let $\rho_{f_1,f_2}$ be the irreducible representation of $Sp(2)$ corresponding to 
$(f_1,f_2)$ with $f_1\equiv f_2 \bmod 2$ with $f_1\geq f_2\geq 0$. We define 
$\fM_{f_1,f_2}(U)$ as in the introduction and call an element of $\fM_{f_1,f_2}(U)$ 
an algebraic modular form of weight $\rho_{f_1,f_2}$ belonging to $U$. 
For later use, we review another non-adelic realization of $\fM_{f_1,f_2}(U)$ (see \cite{hashimoto} p. 230).  
Let $V$ be a representation space of $\rho_{f_1,f_2}$.
Then we have 
\[
\fM_{f_1,f_2}(U)\cong \oplus_{i=1}^{H}V^{\Gamma_i}
\qquad (\text{direct sum}), 
\]
where we put 
\[
V^{\Gamma_i}=\{v\in V; \rho_{f_1,f_2}(\gamma)v=v \text{ for all }\gamma \in \Gamma_i\}.
\]
The above isomorphism is given by the mapping 
\[
\fM_{f_1,f_2}(U)\ni f \rightarrow \sum_{i=1}^{H}\rho(g_i)^{-1}f(g_i)\in 
\oplus_{i=1}^{H}V_i^{\Gamma_i}.
\] 
Next we consider an action of the Hecke algebra. 
For $g \in G_A$, we define an action of $UgU=\coprod_{j}z_jU$ (disjoint) 
on $\fM_{f_1,f_2}(U)$ by 
\[
(R_{f_1,f_2}(UgU)f)(x)=\sum_{j}\rho_{f_1,f_2}(z_j)f(z_j^{-1}x).
\]
(Note that the representation $\rho_{f_1,f_2}$ is defined on 
$G_A$ by \eqref{definerep}.)
We interpret the action of $UgU$ on $\fM_{f_1,f_2}(U)$ into an $H\times H$ matrix action 
on $\oplus_{i=1}^{H}V^{\Gamma_i}$. 
We put $T_{ij}=G\cap g_i^{-1}UgUg_j$. Then it is clear by definition that we have
$\Gamma_iT_{ij}\Gamma_j=T_{ij}$. So we regard $T_{ij}$ as a formal sum 
\[
T_{ij}=\sum_{h}\Gamma_i h\Gamma_j=\sum_{m}h_m\Gamma_j.
\] 
Then this can be regarded as an operator of $V^{\Gamma_j}$ 
to $V^{\Gamma_i}$ by defining the action on 
$v_j \in V^{\Gamma_j}$ by 
\begin{equation}\label{Tijaction}
T_{ij}v_j = \sum_{m}\rho_{f_1,f_2}(h_m)v_j, \qquad (T_{ij}=\sum_{h}\Gamma_ih\Gamma_j = \sum_{m}h_m\Gamma_j).
\end{equation}
Then by \cite{hashimoto} Lemma 1, the action of $UgU$ is identified 
with the action of the matrix $(T_{ij})_{1\leq i, j \leq H}$ on $\oplus_{i=1}^{H}V^{\Gamma_i}$ 
by $(v_1,\ldots,v_H)\rightarrow (\sum_{j=1}^{H}T_{ij}v_j)_{1\leq i\leq H}$. Here we may also write   
\[
T_{ij}=\sum_{a\in G\cap g_i^{-1}UgUg_j/\Gamma_{j}}\rho_{f_1,f_2}(a)|V^{\Gamma_j}.
\]
We describe the trace of the action of $UgU$ in this setting.
For $x \in G$, we define $\rho_{f_1,f_2}(x)$ by \eqref{definerep}
through the diagonal embedding $G \rightarrow G_A$. 
We denote by $Tr(\rho_{f_1,f_2}(x))$ the trace of the representation
$\rho_{f_1,f_2}(x)$ on whole $V$.  
The following Lemma \ref{discrettrace} and Corollary \ref{maincor} are almost trivial by definition and has been used many times 
for the calculation of dimension formulas of algebraic modular 
forms such as \cite{hashimotoibu}, \cite{ibuparamodular}, \cite{hashimotoibudimII}, \cite{ibuparavector}, \cite{ibumiddle}, 
but to see its connection to $SO(5)$ clearly, we explain some details
for safety. 
\begin{lemma}\label{discrettrace}
For $i=1$, \ldots, $h$, define $T_{ii}=G\cap g_i^{-1}UgUg_i$ as before. 
Then we have 
\[
Tr(R_{f_1,f_2}(UgU))=\sum_{i=1}^{H}\frac{\sum_{x\in T_{ii}}
Tr(\rho_{f_1,f_2}(x))}{\#(\Gamma_i)}, 
\]
where $\#(\Gamma_i)$ is the cardinality of $\Gamma_i$.
\end{lemma}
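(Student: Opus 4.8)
The plan is to reduce the trace of the Hecke operator to a sum over the diagonal blocks of its matrix realization, and then to rewrite the trace of each block on $V^{\Gamma_i}$ as an average of the full character $Tr(\rho_{f_1,f_2}(\cdot))$ over the finite set $T_{ii}$. First I would invoke the matrix realization recalled above via \cite{hashimoto} Lemma~1 and \eqref{Tijaction}: under the identification $\fM_{f_1,f_2}(U)\cong\bigoplus_{i=1}^{H}V^{\Gamma_i}$ the operator $R_{f_1,f_2}(UgU)$ is the matrix $(T_{ij})_{1\le i,j\le H}$, each $T_{ij}$ acting from $V^{\Gamma_j}$ to $V^{\Gamma_i}$ by \eqref{Tijaction}. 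Hence $Tr(R_{f_1,f_2}(UgU))=\sum_{i=1}^{H}\operatorname{tr}\bigl(T_{ii}\mid V^{\Gamma_i}\bigr)$, and everything comes down to the single diagonal block $T_{ii}$ acting on $V^{\Gamma_i}$.

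Next I would turn this block into an average over all of $T_{ii}$. Decomposing $T_{ii}=\coprod_{m}h_m\Gamma_i$ as in \eqref{Tijaction} and using that $\rho_{f_1,f_2}(\gamma)v=v$ for $v\in V^{\Gamma_i}$ and $\gamma\in\Gamma_i$, each right coset $h_m\Gamma_i$ contributes $\#(\Gamma_i)$ identical terms $\rho_{f_1,f_2}(h_m)v$, so that on $V^{\Gamma_i}$ one has $T_{ii}=\frac{1}{\#(\Gamma_i)}\sum_{x\in T_{ii}}\rho_{f_1,f_2}(x)$. I would then introduce the projection $P_i=\frac{1}{\#(\Gamma_i)}\sum_{\gamma\in\Gamma_i}\rho_{f_1,f_2}(\gamma)$ of $V$ onto $V^{\Gamma_i}$ together with the endomorphism $\widetilde{T}_i=\frac{1}{\#(\Gamma_i)}\sum_{x\in T_{ii}}\rho_{f_1,f_2}(x)\in\operatorname{End}(V)$. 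The bi-invariance $\Gamma_iT_{ii}\Gamma_i=T_{ii}$ already noted above means that left and right multiplication by any $\gamma\in\Gamma_i$ permutes $T_{ii}$, which yields at once $\widetilde{T}_iP_i=\widetilde{T}_i=P_i\widetilde{T}_i$, and hence $\widetilde{T}_i=P_i\widetilde{T}_iP_i$.

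This last identity is the crux of the argument, and the step I would check most carefully. It says that $\widetilde{T}_i$ vanishes on the $P_i$-complement $(1-P_i)V$ and has image inside $V^{\Gamma_i}$, where it coincides with $T_{ii}$; writing $\widetilde{T}_i$ in block form for the decomposition $V=V^{\Gamma_i}\oplus(1-P_i)V$, it is block-diagonal with blocks $T_{ii}\mid V^{\Gamma_i}$ and $0$. Therefore $\operatorname{tr}\bigl(T_{ii}\mid V^{\Gamma_i}\bigr)=\operatorname{tr}\bigl(\widetilde{T}_i\mid V\bigr)=\frac{1}{\#(\Gamma_i)}\sum_{x\in T_{ii}}Tr(\rho_{f_1,f_2}(x))$, the right-hand trace now being taken over all of $V$. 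Summing over $i=1,\dots,H$ then yields the asserted formula. The whole computation is formal once one grants the passage from the trace on the invariant subspace $V^{\Gamma_i}$ to the average of the full character on $V$; this passage is exactly where the $\Gamma_i$-bi-invariance of $T_{ii}$ is used, and it is the only point requiring genuine care.
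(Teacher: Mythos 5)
Your proof is correct and follows essentially the same route as the paper: reduce to the diagonal blocks $T_{ii}$, rewrite the block action on $V^{\Gamma_i}$ as $\frac{1}{\#(\Gamma_i)}\sum_{x\in T_{ii}}\rho_{f_1,f_2}(x)$, and then identify the trace on $V^{\Gamma_i}$ with the trace of that average on all of $V$ because its image lands in $V^{\Gamma_i}$. The only cosmetic difference is that you package the last step with the projector $P_i$ and use the two-sided invariance $\Gamma_iT_{ii}\Gamma_i=T_{ii}$ to get a block-diagonal form, whereas the paper uses only the left invariance $\Gamma_iT_{ii}=T_{ii}$ and a block-upper-triangular matrix with vanishing lower rows; both yield the same trace identity.
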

\begin{proof}
Notation being as in \eqref{Tijaction}, 
 the action of $T_{ii}$ on $v_i \in V^{\Gamma_i}$ is given by 
\begin{align*}
T_{ii}v_i & =\sum_{m}\rho_{f_1,f_2}(h_m)v_i = \sum_{m}\sum_{\gamma \in \Gamma_i}
\frac{\rho_{f_1,f_2}(h_m\gamma)v_i}{\#(\Gamma_i)}
= \frac{\sum_{x\in T_{ii}}\rho_{f_1,f_2}(x)v_i}{\#(\Gamma_i)}.
\end{align*}
Now we must consider a relation 
between the trace of the above 
action of $T_{ii}$ on $V^{\Gamma_i}$ and 
$Tr(\rho_{f_1,f_2}(x))$ on $V$.
We define an action of $T_{ii}$ on $v\in V$ by 
\[
T_{ii}v=  \sum_{x\in T_{ii}}\rho_{f_1,f_2}(x)v.
\]
Since $\Gamma_iT_{ii}=T_{ii}$, we see that 
$T_{ii}v\in V^{\Gamma_i}$ for any $v \in V$.
So considering the representation matrix of $T_{ii}$ 
with respect to a basis of $V$ obtained by prolonging 
a basis of $V^{\Gamma_i}$, it is clear that
\[
\sum_{x\in T_{ii}}Tr(\rho_{f_1,f_2}(x)|V)
=
\sum_{x\in T_{ii}}Tr(\rho_{f_1,f_2}(x)|V^{\Gamma_i}).\qedhere
\]
\end{proof}
By definition \eqref{definerep} of the representation $\rho$, 
if we put $\hat{g}=g/\sqrt{n(g)}$ for $g \in G$, then 
we have 
$Tr(\rho_{f_1,f_2}(g))=Tr(\rho_{f_1,f_2}(\hg))$.
Here if $\Phi(x)$ is the principal polynomial of $g\in G$
(defined as the characteristic polynomial of 
$g\in G\subset M_2(B)\subset M_4(\C)$), 
the principal polynomial of $\hg$ is given by 
$\phi(x)=n(g)^{-2}\Phi(\sqrt{n(g)}x)$.  
It is well known that the character 
$Tr(\rho_{f_1,f_2}(\hg))$ depends only on the principal 
polynomial $\phi(x)$ of $\hg$. 
For any such polynomial $\phi(x)$, 
we write $\chi_{f_1,f_2}(f)=Tr(\rho_{f_1,f_2}(\hg))
=Tr(\rho_{f_1,f_2}(g))$ where $g\in G$ is as above.
Since we assumed $\rho_{f_1,f_2}(\pm 1_2)=1$, 
we have $Tr(\rho_{f_1,f_2}(\hg))=Tr(\rho_{f_1,f_2}(-\hg))$,  so 
we have $\chi_{f_1,f_2}(f(x))=\chi_{f_1,f_2}(f(-x))$. 
Let $G(\phi)$ be the set of all elements $g$ of $G$ such that 
$\phi(x)$ or $\phi(-x)$ is the principal polynomial of $\hg$.
We also put 
\[
Tr(R_{f_1,f_2}(UgU),\phi)=\sum_{i=1}^{H}
\frac{\sum_{g\in T_{ii}\cap G(\phi)}Tr(\rho_{f_1,f_2}(g))}{\#(\Gamma_i)}. 
\]
Then by Lemma \ref{discrettrace}, 
we have the following corollary. 
\begin{cor}\label{maincor}
We have 
\begin{equation}\label{corshiki}
Tr(R_{f_1,f_2}(UgU),\phi)=\chi_{f_1,f_2}(\phi)\times 
\sum_{i=1}^{h}\frac{\#(T_{ii}\cap G(\phi))}{\#(\Gamma_i)}.
\end{equation}
\end{cor}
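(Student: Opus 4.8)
The plan is to read off the result directly from the definition of the refined quantity $Tr(R_{f_1,f_2}(UgU),\phi)$ together with the fact, recorded just before the statement, that the character is constant on $G(\phi)$. I would begin from the defining expression
\[
Tr(R_{f_1,f_2}(UgU),\phi)=\sum_{i=1}^{H}
\frac{\sum_{x\in T_{ii}\cap G(\phi)}Tr(\rho_{f_1,f_2}(x))}{\#(\Gamma_i)},
\]
which is simply the $\phi$-part of the trace formula of Lemma \ref{discrettrace} (summing over all admissible principal polynomials recovers $Tr(R_{f_1,f_2}(UgU))$ itself). So no fresh input is needed beyond isolating one summand-type and evaluating the inner sum.

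The key step is the observation that $Tr(\rho_{f_1,f_2}(x))$ depends only on the principal polynomial of $\hg=x/\sqrt{n(x)}$, and that by the normalization $\rho_{f_1,f_2}(\pm 1_2)=1$ one has $\chi_{f_1,f_2}(\phi(x))=\chi_{f_1,f_2}(\phi(-x))$. Hence for \emph{every} $x\in T_{ii}\cap G(\phi)$ — whose $\hg$ has principal polynomial equal to either $\phi(x)$ or $\phi(-x)$ by definition of $G(\phi)$ — the value $Tr(\rho_{f_1,f_2}(x))$ equals the single constant $\chi_{f_1,f_2}(\phi)$. The inner sum therefore collapses to a count times a constant,
\[
\sum_{x\in T_{ii}\cap G(\phi)}Tr(\rho_{f_1,f_2}(x))
=\#\bigl(T_{ii}\cap G(\phi)\bigr)\cdot \chi_{f_1,f_2}(\phi),
\]
and substituting this into the displayed expression and pulling the common factor $\chi_{f_1,f_2}(\phi)$ out of the sum over $i$ yields exactly \eqref{corshiki}.

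There is essentially no obstacle here; the entire content sits in the two imported facts, namely that the character is a function of the principal polynomial alone and that it is invariant under $x\mapsto -x$. The only point deserving a moment's care is the compatibility of the definition of $G(\phi)$ — which deliberately lumps together the polynomials $\phi(x)$ and $\phi(-x)$ — with the well-definedness of the constant $\chi_{f_1,f_2}(\phi)$; this compatibility is precisely guaranteed by the $\pm$-invariance of the character, which is why the hypothesis $\rho_{f_1,f_2}(\pm 1_2)=1$ is genuinely needed rather than being a cosmetic convenience.
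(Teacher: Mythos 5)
Your proposal is correct and follows exactly the route the paper takes: the paper defines $Tr(R_{f_1,f_2}(UgU),\phi)$ as the $\phi$-part of the sum in Lemma \ref{discrettrace} and obtains the corollary immediately by pulling out the constant $\chi_{f_1,f_2}(\phi)$, using that the character depends only on the principal polynomial of $\hg$ and is invariant under $x\mapsto -x$ because $\rho_{f_1,f_2}(\pm 1_2)=1$. Nothing is missing.
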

This corollary is very useful since RHS is a product of the term depending on the representation and the term independent of the representation.
We have $\chi_{0,0}(\phi)=1$ for any $\phi$, so  
the general formula is reduced to the case of the 
trivial representation as far as the formula for $Tr(R_{0,0}(UgU))$ 
is given as a sum of 
explicit terms for each $\phi$. (This is usually true for any trace formula).   

Now we specialize our consideration to the case 
when $\cL=\cL_{npg}$. 
We consider the double coset 
$R(\pi)=U_{npg}(p)\pi U_{npg}(p)=U_{npg}(p)\pi=\pi U_{npg}(p)$,
where $\pi$ is identified with an element of $G_A$ by the diagonal 
embedding.
Our aim is to give a formula for $Tr(R_{f_1,f_2}(\pi))$ on $\fM_{f_1,f_2}(U_{npg}(p))$. 
In this case, for any element $x\in T_{ii}$, we have $n(x)=p$.
By Corollary \ref{maincor}, the main part of the calculation of
$Tr(R_{f_1,f_2}(\pi))$ for $\rho_{f_1,f_2}$ is to give the value of RHS of \eqref{corshiki} for $(f_1,f_2)=(0,0)$ for each $f$.  
This calculation is based on two things. One is 
the relation $2T=H+Tr(R_{0,0}(\pi))$ proved in \cite{ibutypenumber} Theorem 3.6 
between the class number $H$ of $\cL_{npg}$, the type number $T$ of
$\cL_{npg}$(the definition will be reviewed soon), and $Tr(R_{0,0}(\pi))$.  
The other is an equality between the type number $T$ of $\cL_{npg}$ and the class 
number of quinary lattices of some genus of $\det=2p$ proved in \cite{ibuquinary}
Theorem 4.5 for $p\neq 2$. 
(The case $p=2$ will be treated separately in section \ref{p23}.)   
In \cite{ibuquinary}, our calculation of $T$ 
depends on the class number formula of Asai in \cite{asai}.
But this time, we should be more careful since 
we must calculate the RHS of Corollary \ref{maincor} for each principal 
polynomial. So we will compare the class number formula in 
\cite{asai} and 
a formula of $2T$ given by $H+Tr(R_{0,0}(\pi))$ 
for the Hecke operator of $G$ for each principal polynomial. 
For that purpose, we will review some details on the type number below. 

We fix a set of representatives of 
$U_{npg}(p)\backslash G_A/G$ as in \eqref{doublecoset} and 
for a fixed 
$L\in \cL_{npg}$, put $L_i=Lg_i$ ($i=1, \ldots, H$). 
We define the right order $R_i$ of $L_i$ by 
\[
R_i = \{z\in M_2(B); L_i z \subset L_i\}.
\]
The classical meaning of the type number is the number of isomorphism classes 
of maximal orders in an algebra. But in our case, 
all maximal orders in $M_2(B)$ are conjugate to $M_2(O)$ 
since the class number of $M_2(B)$ is $1$ by the strong approximation theorem of $SL_2(B)$ (See \cite{kneser}, \cite{kneser2}). 
But here, instead of $GL_2(B)$ conjugacy, we consider 
the $G$ conjugacy of $R_i$. The ($G$-)type number $T$ of $\cL_{npg}$ 
is defined to be a number of $G$-conjugacy classes in $\{R_i\}_{1\leq i\leq H}$. 
In \cite{ibutypenumber} we proved that $2T=H+Tr(R_{0,0}(\pi))$.
So we have $T\leq H\leq 2T$.  
For each principal polynomial $\Phi$ of 
an element appearing in 
$g_i^{-1}(U_{npg}(p)\cup U_{npg}(p)\pi)g_i\cap G$ for some $i$, 
put $\phi(x)=n(g)^{-2}\Phi(x/\sqrt{n(g)})$ as before. 
Then the contribution to $T$ of the ``$\phi(x)$ and $\phi(-x)$-part'' 
in $T=(H+Tr(R_{0,0}(\pi)))/2$ 
is defined to be  
\begin{equation}\label{fpart}
T(\phi)=
\frac{1}{2}
\sum_{i=1}^{H}\frac{\#(\Gamma_i\cap G(\phi))+\#(g_i^{-1}U_{npg}(p)\pi g_i \cap G(\phi))}{\#(\Gamma_i)},
\end{equation} 
where we write $\Gamma_i=g_i^{-1}U_{ngp}(p)g_i\cap G$. 
Now we will compare this to the class number formula
of quinary lattices. 
Let $\cM=\cM(1,p)$ be the genus of lattices with determinant $2p$ in 
the $5$ dimensional positive definite quadratic space $W$ defined in 
\cite{ibuquinary} p. 215. We have shown in \cite{ibuquinary} 
that 
the class number of $\cM$ is the type number $T$ 
of $\cL_{npg}$ if $p\neq 2$. 
The class number formula for $\cM$ can be 
explained as follows. We denote by $M_i$ ($i=1,\ldots, T$) 
a complete set of representatives of classes in $\cM$. 
Although Asai in \cite{asai} used the orthogonal group $O(W)$ to define a class, 
we have $O(W)=SO(W)\cup (-1_5)SO(W)$ for the special 
orthogonal group $SO(W)$, and 
the class number for both $O(W)$ and $SO(W)$
are the same and the formulas are identical, 
so we explain the $SO(W)$ formulation.
Let $M_1$, \ldots, $M_T$ be a complete set of 
representatives of classes in $\cM$.
We denote by $Aut(M_i)$ the group of automorphisms 
of $M_i$ in $SO(W)$. Then we have the trivial identity
\[
T=\sum_{i=1}^{T}\frac{\#(Aut(M_i))}{\#(Aut(M_i))}.
\]
Let $\hh(x)$ be a principal polynomial of an element of $SO(W)$ of degree $5$. 
It is of the shape
\[
\hh(x)=(x-1)h(x)
\]
for some degree $4$ monic reciprocal polynomial $h(x)$. 
We denote by $SO(W,\hh)$ the set of elements of 
$SO(W)$ whose principal polynomial is $\hh(x)$. 
We put 
\[
T(\hh)=\sum_{i=1}^{T}\frac{\#(Aut(M_i)\cap SO(W,\hh))}{\#(Aut(M_i))}.
\]
Naturally we have $T=\sum_{\hh}T(\hh)$. 
In Asai \cite{asai}, he gave a formula for $T(\hh)$ for each $\hh$ 
as a contribution of the case $C_{\pm i}$ where $C_{\pm i}$ means 
a principal polynomial of $\pm \tg$ of some element $\tg\in O(W)$. 
Here one of $\{\tg,-\tg\}$ is an element of $SO(W)$ and Asai's formula is the same 
as the contribution of $SO(W)$ belonging to one of $C_i$ or $C_{-i}$.
Now we will interpret each $T(\hh)$ into the ^^ ^^ $\phi$-part" $T(\phi)$ of 
$T$ defined by \eqref{fpart}. To explain this, we review some parts of \cite{ibuquinary}. 
The even Clifford algebra $C_2(W)$ of $W$ can be identified 
with $M_2(B)$, $W$ with a linear subspace of $M_2(B)$ over $\Q$, and the even Clifford group
$\Gamma_2$ with $G$. The inner automorphism 
$W\ni w \rightarrow g^{-1}wg \in W$ for $g \in G$ 
induces an isomorphism $G/\{\Q^{\times}1_2\}\cong SO(W)$
(\cite{ibuquinary} p. 210).
To compare with the formulation by $G$, we must describe $Aut(M_i)$
in terms of $G$. Fix $L$ to be a representative of $\cL_{npg}$
and $R$ the right order of $L$.
Then we have 
\begin{lemma}[\cite{ibuquinary} Lemma 4.1 and Corollary 4.4]\label{keylemma}
There exists a lattice $M\in \cM$ such that 
for any $g_A=(g_v)\in G_A$, we have 
$g_vM_vg_v^{-1}=M_v$ if and only if 
$g_vR_vg_v^{-1}=R_v$ for any finite place $v$ of $\Q$,
where we put $M_v=M\otimes_{Z}\Z_v$ and 
$R_v=R\otimes_{\Z}\Z_v$.
\end{lemma}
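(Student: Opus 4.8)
The plan is to reduce the assertion to a purely local statement at each finite place $v$ and to deduce the equivalence from the functoriality of the even Clifford construction that identifies $C_2(W)$ with $M_2(B)$. Both stabilizer conditions $g_vM_vg_v^{-1}=M_v$ and $g_vR_vg_v^{-1}=R_v$ are imposed place by place, so it suffices to treat each $v$ separately and patch the local data into a global $M\in\cM$. The mechanism I would use repeatedly is this: under $G/\{\Q^{\times}1_2\}\cong SO(W)$, conjugation $z\mapsto g_vzg_v^{-1}$ is an algebra automorphism of $M_2(B)_v=C_2(W)_v$ whose restriction to the subspace $W_v\subset M_2(B)_v$ is an isometry realizing the $SO(W)$-action; in particular $g_v$ preserves $W_v$, and for a stabilizer condition $g_vM_vg_v^{-1}=M_v$ is equivalent to $g_v^{-1}M_vg_v=M_v$.

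First I would construct $M$ from the right order $R$ of $L$, the natural candidate being $M_v:=R_v\cap W_v$, patched to a global lattice in $\cM$. The essential local input, which is the content of \cite{ibuquinary} Lemma 4.1, is the converse relation that the even Clifford order generated by $M_v$ is exactly $R_v$, that is $R_v=C_2(M_v)$. This must be checked place by place, comparing the local maximal order $R_v$ of $M_2(B)_v$ against the even Clifford order of a quinary lattice of determinant $2p$, and it is precisely what forces $M$ to lie in the genus $\cM=\cM(1,p)$. Taken together, $M_v=R_v\cap W_v$ recovers the lattice from the order and $R_v=C_2(M_v)$ recovers the order from the lattice, so $M$ and $R$ are mutually determined at every $v$.

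Granting this correspondence, both implications are formal. If $g_vM_vg_v^{-1}=M_v$, then since conjugation by $g_v$ is an algebra automorphism it carries the even Clifford order to the even Clifford order of the conjugated lattice, so $g_vR_vg_v^{-1}=g_vC_2(M_v)g_v^{-1}=C_2(g_vM_vg_v^{-1})=C_2(M_v)=R_v$. Conversely, if $g_vR_vg_v^{-1}=R_v$, then using that conjugation preserves $W_v$ I obtain $g_vM_vg_v^{-1}=(g_vR_vg_v^{-1})\cap(g_vW_vg_v^{-1})=R_v\cap W_v=M_v$. Assembling the local equivalences over all finite $v$ yields the lemma; this is the $G$-equivariance (Corollary 4.4) that later lets me identify $\#(Aut(M_i))$ with the order of the $G$-normalizer of $R_i$ in the type-number computation.

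The step I expect to be the \emph{main obstacle} is verifying $R_v=C_2(M_v)$ uniformly across all places, above all at the ramified prime $v=p$ and at $v=2$. At $p$, where $B$ ramifies and we are in the non-principal genus, the local maximal order $R_p$ and the determinant-$2p$ quinary lattice $M_p$ have a more delicate structure than at the split places, and one must check that the even Clifford order of $M_p$ is the full maximal order rather than a proper Eichler suborder, and that the determinant normalization of $\cM$ is exactly what pins down $M_p=R_p\cap W_p$. Once this local Clifford computation is carried out at the bad places, the remaining places are unramified and the matching is the standard even Clifford correspondence for a near self-dual quinary lattice, which is routine.
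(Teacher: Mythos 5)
First, a point of comparison: the paper does not prove this lemma at all --- it is imported verbatim from \cite{ibuquinary} (Lemma 4.1 and Corollary 4.4), so there is no in-paper argument to measure your proposal against. Your formal skeleton is the right one and is consistent with the setup the paper recalls from that source: once one knows that $M_v$ and $R_v$ determine each other by constructions commuting with conjugation by $G_v$ (which preserves $W_v$ under the identification $C_2(W)=M_2(B)$, $W\subset M_2(B)$), the equivalence of the two stabilizer conditions is immediate, exactly as you argue in your third paragraph.

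The gap is that the substantive content of the lemma is precisely the part you defer. The statement begins ``there exists a lattice $M\in\cM$ such that\dots'', so what actually has to be proved is (i) that your candidate $M_v=R_v\cap W_v$ patches to a global lattice lying in the genus $\cM=\cM(1,p)$ of determinant $2p$, and (ii) that $R_v$ is recovered from $M_v$, i.e.\ your claim $R_v=C_2(M_v)$. Neither is verified, and these are not routine: in Clifford-type correspondences the naive intersection with $W_v$ is frequently the wrong normalization (one typically needs a dual lattice or an index adjustment to land in the stated genus), and the delicate places are exactly $v=2$ and $v=p$, where $R_p$ is the right order of a lattice in the \emph{non-principal} genus and the quinary lattice must come out with the Hasse invariants $\epsilon_2=\epsilon_p=-1$ that the paper uses later when applying Asai's formula. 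Without carrying out these local computations, your argument establishes only the easy implication ``mutual determination $\Rightarrow$ equal stabilizers'', not the existence assertion that constitutes the lemma; that existence is exactly what \cite{ibuquinary} Lemma 4.1 supplies.
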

For a representative $g_i=(g_{i,v})$ of the double coset in 
\eqref{doublecoset}, we put 
\[
R_i=g_i^{-1}Rg_i=\cap_{v<\infty}(g_{i,v}^{-1}R_vg_{i,v} \cap M_2(B)).
\]
This is the right order of $L_i=Lg_i$. 
Changing numbers $i$ if necessary, we assume that $R_1$, \ldots, $R_T$ are representatives of 
types (i.e. $G$ conjugacy classes). 
Then $M_i=g_i^{-1}Mg_i$ ($i=1,\ldots, T$) are also the 
representatives of classes in $\cM$. 
By Lemma \ref{keylemma}, we see that 
any element of $Aut(M_i)$ comes from $g\in G$ 
with $gg_i^{-1}Rg_ig^{-1}=g_i^{-1}Rg_i$. This means that 
$Rg_igg_i^{-1}$ 
is a two sided ideal of $R$, and it is well known that 
any two sided ideal of $R_v$ is spanned by $\Q_v^{\times}$ 
and besides $\pi$ if $v=p$ up to $R_v^{\times}$.
By definition we have 
$U_{npg}(p)=G_{\infty}\prod_{v<\infty}(R_v^{\times}\cap G_v)$,
and since $\Q_A^{\times}=
\Q^{\times}\R^{\times}_+\prod_{v<\infty}\Z_v^{\times}$
and $(\R^{\times} 1_2)\prod_{v<\infty}(\Z_v^{\times} 1_2) \subset U_{npg}(p)$, 
this means that $g_i(mg)g_i^{-1}\in U_{npg}(p)\cup U_{npg}(p)\pi$
for some $m\in \Q^{\times}$.  Writing the natural projection of $G$ to 
$G/{\Q^{\times} 1_2}\cong SO(W)$ by $\iota$,  
we have $\iota(m1_2)=1_5$,
so we have 
\[
Aut(M_i)=\iota(g_i^{-1}(U_{npg}(p)\cup U_{npg}(p)\pi)g_i \cap G).
\]
Here by definition we have 
\[
g_i^{-1}U_{npg}(p)g_i\cap G = \Gamma_i,
\]
so $\iota(\Gamma_i)$ is always contained in $Aut(M_i)$. 
The problem is the part $g_i^{-1}U_{npg}(p)\pi g_i\cap G$. 
To explain this part more clearly, we review the 
relation of the type number and the class number of $\cL_{npg}$ written in \cite{ibuquinary}.
We write the right orders of $L_i=Lg_i$ by $R_i$ 
($i=1$, \ldots, $H$). Assume that $gR_ig^{-1}=R_j$ 
for some $g \in G$. By the same reason we explained above, 
this means that 
\[
g_i(mg)g_j^{-1} \in U_{npg}(p) \cup U_{npg}(p)\pi
\]
for some $m\in \Q^{\times}$. So $R_i$ is $G$ conjugate to 
$R_j$ if and only if 
\begin{equation}\label{nonemptyij}
g_i^{-1}(U_{npg}(p)\cup U_{npg}(p)\pi)g_j\cap G \neq \emptyset.
\end{equation}
Now we fix $i$ and see which $j\neq i$ satisfies \eqref{nonemptyij}.
If $i\neq j$, then $g_i^{-1}U_{npg}(p)g_j\cap G=\emptyset$  
since $\{g_i\}_{1\leq i\leq h}$ is a 
set of representatives of $U\backslash G_A/G$.  
The condition 
$g_i^{-1}U_{npg}(p)\pi g_j \cap G\neq \emptyset$ is 
equivalent to 
\begin{equation}\label{ij}
\pi U_{npg}(p)g_jG=U_{npg}(p)\pi g_j G=U_{npg}(p)g_iG. 
\end{equation}
Since 
\[
U_{npg}(p)g_iG\subset G_A=\pi G_A=
\coprod_{l=1}^{H}\pi U_{npg}(p)g_lG=\coprod_{l=1}U_{npg}(p)\pi g_lG \quad (disjoint),
\] 
there exists unique $j$ that satisfies \eqref{ij}.
This means that we have two cases. 
The first one is the case that $j=i$ in \eqref{ij} and we have  
\[
U_{npg}(p)\pi g_i G=U_{npg}(p)g_i G.
\]
This means that 
$g_i^{-1}U_{npg}(p)\pi g_i\cap G \neq \emptyset$.
Besides, if 
$g$, $g' \in g_i^{-1}U_{npg}(p)\pi g_i\cap G$, then 
writing $g=g_i^{-1}u\pi g_i$ and $g'=g_i^{-1}u'\pi g_i$
for $u$, $u'\in U_{npg}(p)$, 
we have $g^{-1}g'=g_i^{-1}\pi^{-1}u^{-1}u'\pi g_i \in G$, but
we have $\pi^{-1} u^{-1}u' \pi\in \pi^{-1} U_{npg}(p)\pi=U_{npg}(p)$, 
so $g^{-1}g'\in \Gamma_i$. This means that 
\[
g_i^{-1}U_{npg}(p)\pi g_i \cap G=g_0\Gamma_i
\]
for some $g_0\in G$. We may assume that  
the set of such $i$ is $\{1,\ldots,t\}$. Then for these $i$, we have 
\[
Aut(M_i)=\iota(\Gamma_i)\cup \iota(g_0\Gamma_i).
\]
This is a disjoint union in $SO(W)$. Indeed, for any element $g_0\in G$ with $n(g_0)=p$, we have 
$\iota(g_0)\not\in \iota(G^1)$, because for any element $g_0'\in \Q^{\times}G^1$,
we have $n(g_0')\in (\Q^{\times})^2$. So we have $\#(Aut(M_i))=2\#(\iota(\Gamma_i))$. 
Next we consider the second case that $j\neq i$ in \eqref{ij}.
This case, we have a pair of right orders $R_i$ and $R_j$ 
which are $G$ conjugate 
one another, and we may take $(i,j)=(i,i+(H-t)/2)$
for $i=t+1,\ldots,(H-t)/2$. Here we have 
\[
Aut(M_i)=\iota(\Gamma_i).
\]
We also have $T=t+(H-t)/2$ and 
$t=Tr(R_{0,0}(\pi))$.  
Now for a principal polynomial $\hh$, we have 
\[
2T(\hh)=2\sum_{i=1}^{T}\frac{\#(\iota(\Gamma_i)\cap SO(W,\hh))+\#(\iota(g_i^{-1}U_{npg}(p)\pi g_i\cap G)\cap SO(W,\hh))}{\#(\iota(\Gamma_i))+\#(\iota(g_i^{-1}U_{npg}(p)\pi g_i\cap G))}.
\]
For $i=1$, \ldots, $t$, the denominator is $2\#(\iota(\Gamma_i))$. For $i=t+1$, \ldots, $T-t$, the denominator is $\#(\iota(\Gamma_i))$. 
Besides, when $gg_iRg_i^{-1}g^{-1}=g_jRg_j^{-1}$ for some $(i,j)$ with 
$1\leq i\neq j\leq H$ and $g \in G$,   
we have  $\Gamma_i\cong \Gamma_j$, so  
$\#(\iota(\Gamma_i))=\#(\iota(\Gamma_j))$. 
Now we compare principal polynomials $f(x)$ 
of $\hg=g/\sqrt{n(g)}$ for $g \in G$ and $\hh(x)$ of $\iota(g)$. 
If the eigenvalues of an element of $Sp(2)$ is $\epsilon_1$, 
$\epsilon_2$, $\epsilon_1^{-1}$, $\epsilon_2^{-1}$,
then eigenvalues of the image in $SO(5)$ is $1$, 
$\epsilon_1\epsilon_2$, $\epsilon_1\epsilon_2^{-1}$,
$\epsilon_1^{-1}\epsilon_2$, $\epsilon_1^{-1}\epsilon_2^{-1}$, so 
for the principal polynomial
\[
f(x)=x^4+c_1x^3+c_2x^2+c_1x+1
\]
of an element $\hg\in G^1_{\infty}$, the principal polynomial 
of $\iota(g)\in SO(W)$ is given by 
\[
\hf(x)=(x-1)h(x), \quad 
h(x)=x^4-(c_2-2)x^3+(c_1^2-2c_2+2)x^2-(c_2-2)x+1.
\]
Here for $\phi(x)$ and $\phi(-x)$, we have the same $\hf(x)$.
This is clear from the above calculation and also by 
$Ker(\iota|Sp(2))=\{\pm 1_2\}$. We have $\#(\Gamma_i)=2\#(\iota(\Gamma_i))$ and 
\begin{align*}
\#(\Gamma_i\cap G(\phi))& =2(\#(\iota(\Gamma_i)\cap SO(W,\hf)),\\ 
\#(g_i^{-1}U_{npg}(p)\pi g_i\cap G(\phi))& =2\#(\iota(g_i^{-1}U_{npg}(p)\pi g_i\cap G)\cap SO(W,\hf)),
\end{align*} 
so we can rewrite the above formula for $2T(\hf)$ as 
\begin{lemma}\label{quinaryvshermitian}
We have
\[
2T(\hf)=
\sum_{i=1}^{H}\left(\frac{\#(\Gamma_i \cap G(f))}
{\#(\Gamma_i)}+\frac{\#(g_i^{-1}U_{npg}(p)\pi g_i \cap G(f))}
{\#(\Gamma_i)}\right)
=2T(\phi).
\] 
\end{lemma}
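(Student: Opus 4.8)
The plan is to read the asserted double equality as a reindexing identity. The rightmost equality $\sum_{i=1}^{H}\bigl(\tfrac{\#(\Gamma_i\cap G(f))}{\#(\Gamma_i)}+\tfrac{\#(g_i^{-1}U_{npg}(p)\pi g_i\cap G(f))}{\#(\Gamma_i)}\bigr)=2T(\phi)$ is immediate from the definition \eqref{fpart} of $T(\phi)$, so the whole content lies in showing that the displayed expression for $2T(\hh)$, taken with $\hh=\hf$, reorganizes from a sum over the $T$ type representatives into this sum over all $H$ class representatives. I would carry this out by substituting the two-to-one cardinality relations recorded just before the lemma, namely $\#(\Gamma_i\cap G(\phi))=2\#(\iota(\Gamma_i)\cap SO(W,\hf))$ and $\#(g_i^{-1}U_{npg}(p)\pi g_i\cap G(\phi))=2\#(\iota(g_i^{-1}U_{npg}(p)\pi g_i\cap G)\cap SO(W,\hf))$, together with $\#(\Gamma_i)=2\#(\iota(\Gamma_i))$, where throughout $G(f)=G(\phi)$ since $f$ and $\phi$ name the same degree-four reciprocal principal polynomial and $\hf$ is the associated degree-five $SO(W)$ polynomial.

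First I would split the sum according to the dichotomy already established. For a self-paired index $i\in\{1,\dots,t\}$ one has $g_i^{-1}U_{npg}(p)\pi g_i\cap G=g_0\Gamma_i$, and since $-1_2\in\Gamma_i$ the map $\iota$ is two-to-one on this coset as well, so the denominator is $\#(\iota(\Gamma_i))+\#(\iota(g_0\Gamma_i))=2\#(\iota(\Gamma_i))=\#(\Gamma_i)$. Applying the cardinality relations to the numerator then turns the $i$-th summand of $2T(\hf)$, after the outer factor $2$, into precisely $\tfrac{\#(\Gamma_i\cap G(f))+\#(g_i^{-1}U_{npg}(p)\pi g_i\cap G(f))}{\#(\Gamma_i)}$, matching the corresponding term of the target sum.

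Next I would treat the paired indices $i\in\{t+1,\dots,T\}$. Here the $\pi$-term is empty, so the denominator is $\#(\iota(\Gamma_i))=\tfrac12\#(\Gamma_i)$, and after the outer factor the summand becomes $\tfrac{2\#(\Gamma_i\cap G(f))}{\#(\Gamma_i)}$. The key observation is that such a type $i$ represents a pair of distinct classes whose right orders $R_i$ and $R_j$, with partner $j\in\{T+1,\dots,H\}$, are $G$-conjugate; since $\pi$ normalizes $U_{npg}(p)$, the relation \eqref{ij} with $j\neq i$ gives $\Gamma_j=a^{-1}\Gamma_i a$ for some $a\in G$, and this conjugation preserves principal polynomials, whence $\#(\Gamma_i)=\#(\Gamma_j)$ and $\#(\Gamma_i\cap G(f))=\#(\Gamma_j\cap G(f))$. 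Therefore $\tfrac{2\#(\Gamma_i\cap G(f))}{\#(\Gamma_i)}=\tfrac{\#(\Gamma_i\cap G(f))}{\#(\Gamma_i)}+\tfrac{\#(\Gamma_j\cap G(f))}{\#(\Gamma_j)}$, i.e. the single type contribution splits as the two class contributions of $i$ and $j$, both of which have empty $\pi$-term. Assembling the self-paired and paired contributions reproduces the full sum over $i=1,\dots,H$, which is $2T(\phi)$ by \eqref{fpart}.

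I do not expect a genuine obstacle, since the argument is mechanical once the structural facts are in hand; the one place demanding care is exactly the reindexing from the $T$ type representatives to the $H$ class representatives, ensuring that each paired type is expanded into precisely its two conjugate classes and that the conjugacy $\Gamma_i\cong\Gamma_j$ is compatible with the stratification by $G(f)$. The only quantitative input is that the kernel of $\iota$ restricted to each finite group $\Gamma_i$ is $\{\pm1_2\}$, so that $\#(\Gamma_i)=2\#(\iota(\Gamma_i))$ holds uniformly; this is immediate because a finite subgroup of $G$ meets the center $\Q^{\times}1_2$ only in $\{\pm1_2\}$.
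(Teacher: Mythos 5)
Your proposal is correct and follows essentially the same route as the paper, which obtains the lemma by combining the two-to-one cardinality relations with the case split into self-paired types (denominator $2\#(\iota(\Gamma_i))$) and paired types (denominator $\#(\iota(\Gamma_i))$, each type expanding into two $G$-conjugate classes with empty $\pi$-term). Your explicit verification that conjugation by $a\in g_i^{-1}U_{npg}(p)\pi g_j\cap G$ identifies $\Gamma_i$ with $\Gamma_j$ compatibly with the stratification by $G(f)$ is exactly the point the paper leaves implicit (it only records $\#(\iota(\Gamma_i))=\#(\iota(\Gamma_j))$), so your write-up is a faithful, slightly more detailed version of the intended argument.
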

This Lemma means that $T(\phi)$ can be obtained from  
Asai's formula for $T(\hf)$. 
The principal polynomials of elements of $\Gamma_i$ and 
of elements of $g_i^{-1}U_{npg}(p)\pi g_i\cap G$ are 
different, because
the constant terms $n(g)^2$ are different. But 
sometimes the principal polynomials for $\hg$ are the same.
So both contribute to the same $T(\hf)$. 
But this does not matter, since 
the character of the elements depend only on $\hf(x)$, 
or $\phi(\pm x)$. 

For more precise calculations, it would be safer to give a list of tables of principal polynomials $\Phi(x)$, $\phi(x)$ and $\hf(x)$ of 
$g\in G$, $\hg$, and $\iota(g)$.   
We consider possible principal polynomials of elements in 
$U_{npg}(p)\cup U_{npg}(p)\pi$.
The principal polynomials of elements in $\Gamma_i$ has been 
listed in \cite{hashimotoibu} I p. 590 
and also in section \ref{appendix} after Theorem \ref{nonprincipaldim}. 
So we see the case $U_{npg}(p)\pi$. 
For any prime $q\neq p$, we have $U(L_q)=G_q\cap GL_2(O_q)$.
At $p$, the prime element $\pi$ of $O$ we defined before is  
also a prime of $O_p=O\otimes_{\Z}\Z_p$.
We put 
\[
G_p^*=\left\{g\in M_2(B_p); g\begin{pmatrix} 0 & 1 \\ 1 & 0 \end{pmatrix} g^*=n(g)\begin{pmatrix} 
0 & 1 \\ 1 & 0 \end{pmatrix}\right\}.
\]
Then we have $G_p\cong G_p^*$ by $GL_2(O_p)$ conjugation, 
and  
\[
U_{npg}(L_p)\cong G_p^{*}\cap \begin{pmatrix} O_p & \pi ^{-1}O_p \\ \pi O_p & O_p \end{pmatrix}^{\times}.
\]
Hence by the condition that $g$ is conjugate to $U_{npg}\pi$ and 
$g^*=pg^{-1}$, 
the principal polynomial $F(x)$ of an element $g$ appearing in $g_i^{-1}U\pi g_i\cap G$ 
should be of the form 
\[
\Phi(x)=x^4+pax^3+pbx^2+p^2ax+p^2
\]
for some rational integers $a$, $b$.
The possible principal polynomials $\Phi$ of $g \in U_{npg}(p)\pi$ 
and corresponding $\phi(x)$ are as in Table \ref{Upi}. 
\begin{table}[htb]
\caption{Polynomials for $U_{npg}\pi$}\label{Upi}
\begin{tabular}{lll}
$p$ & $\Phi(x)$ & $\phi(x)$ \\
general & $(x^2-p)^2$ & $(x-1)^2(x+1)^2$  \\
general & $(x^2+p)^2$ & $(x^2+1)^2$  \\
general & $x^4+p^2$ &  $x^4+1$ \\
general & $x^4-px^2+p^2$ & $x^4-x^2+1$ \\
general & $x^4+px^2+p^2$ & $x^4+x^2+1$ \\
p=5 & $x^4\pm 5x^3+15x^2\pm 25x+25$  & $x^4\pm \sqrt{5}x^3+3x^2\pm \sqrt{5}x+1$ \\
p=3 & $(x^2\pm 3x+3)^2$ & $(x^2\pm \sqrt{3}x+1)^2$ \\
p=3 & $(x^2\pm 3x+3)(x^2+3)$ & $(x^2\pm \sqrt{3}x+1)(x^2+1)$\\
p=2 & $(x^2\pm 2x+2)^2$ & $(x^2\pm \sqrt{2}x+1)^2$ \\
p=2 & $(x^2\pm 2x+2)(x^2+2)$ & $(x^2\pm \sqrt{2}x+1)(x^2+1)$\\
p=2 & $x^4\pm 2x^3+2x^2\pm 4x+4$ & $x^4\pm\sqrt{2}x^3+x^2\pm\sqrt{2}x+1$  
\end{tabular}
\end{table}

This list can be obtained by using the inequalities in the following lemma.
\begin{lemma}
We fix $p$. Then a polynomial  
\[
\Phi(x)=x^4+pax^3+pbx^2+p^2ax+p^2
\]
is a principal polynomial of an element $g$ of $G$
only if the following conditions are satisfied.
\begin{align*}
|a| & \leq 4/\sqrt{p} \\
b & \leq a^2p/4+2 \\
(pa^2-4)/2 & \leq  b \\
4pa^2 & \leq (b+2)^2
\end{align*}
In particular, for a fixed $p$, there are only finitely many integers $a$, $b\in \Z$
such that $\Phi(x)$ can be a principal  polynomial of an element of $G$. 
\end{lemma}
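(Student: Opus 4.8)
The plan is to pass from $g$ to the normalized element $\hg=g/\sqrt{n(g)}$, exploiting that the constant term $p^2$ of $\Phi$ forces $n(g)=p$, so that $\hg$ lies in the \emph{compact} group $G^1_\infty=Sp(2)$. Its principal polynomial is the real palindromic quartic
\[
\phi(x)=p^{-2}\Phi(\sqrt p\,x)=x^4+a\sqrt p\,x^3+bx^2+a\sqrt p\,x+1 .
\]
Because $\hg$ sits in a compact group preserving an alternating form, all four of its eigenvalues lie on the unit circle; being roots of a reciprocal polynomial they are closed under $\lambda\mapsto\lambda^{-1}=\bar\lambda$, hence occur as two conjugate pairs $e^{\pm i\theta_1},e^{\pm i\theta_2}$. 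This compactness is the one structural input; everything else is elementary root location.

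First I would apply the standard reciprocal substitution $y=x+x^{-1}$: dividing $\phi$ by $x^2$ and using $x^2+x^{-2}=y^2-2$ turns it into the quadratic
\[
q(y)=y^2+a\sqrt p\,y+(b-2),
\]
whose two roots are exactly $y_j=2\cos\theta_j$. Thus the hypothesis is equivalent to requiring that $q$ have both roots real and lying in $[-2,2]$. Writing $y_1+y_2=-a\sqrt p$ and $y_1y_2=b-2$ by Vieta, I would then read off each listed inequality as a standard confinement condition: $|y_1+y_2|\le 4$ gives $|a|\le 4/\sqrt p$; nonnegativity of the discriminant $pa^2-4(b-2)\ge 0$ gives $b\le a^2p/4+2$; the bound $y_1^2+y_2^2=(y_1+y_2)^2-2y_1y_2\le 8$ (from $|y_j|\le 2$) gives $(pa^2-4)/2\le b$; and combining $q(2)\ge 0$ with $q(-2)\ge 0$ yields $b+2\ge 2|a|\sqrt p\ge 0$, which squares to $4pa^2\le(b+2)^2$. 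Since all four conditions are merely necessary consequences of the root constraints, this establishes the asserted ``only if''.

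The finiteness claim then follows at once: the first inequality leaves only finitely many integers $a$, and for each such $a$ the second and third inequalities sandwich $b$ between $(pa^2-4)/2$ and $a^2p/4+2$, leaving finitely many integers $b$. I expect no serious obstacle in the bookkeeping; the only point requiring genuine care is the justification that the eigenvalues of $\hg$ are unimodular, so that the roots $y_j$ of $q$ are not merely real but actually lie in $[-2,2]$. This rests on the compactness of $G^1_\infty=Sp(2)$ recorded in Section~\ref{sec:theorem}, together with the palindromic shape of $\Phi$ that is built into the statement.
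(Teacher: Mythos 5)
Your argument is correct and is essentially the paper's own proof: both normalize to $\hg=g/\sqrt{p}\in Sp(2)$, substitute $y=x+x^{-1}$ to reduce the palindromic quartic to the quadratic $y^2+a\sqrt{p}\,y+(b-2)$, and read off the four inequalities from the requirement that its roots be real and lie in $[-2,2]$. The only cosmetic difference is that you obtain the last inequality from $q(\pm 2)\ge 0$ while the paper writes it as $(4-\alpha^2)(4-\beta^2)\ge 0$, which is the same computation.
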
  

\begin{proof}
Since $g\in G$ and $g/\sqrt{p}\in Sp(2)$, if we put 
\[
h(X)=p^{-2}\Phi(\sqrt{p}X)=X^4+a\sqrt{p}X^3+bX^2+a\sqrt{p}X+1,
\]
then the roots of $h(x)=0$ should be of absolute value $1$.  
For any root $x$ of $h(X)=0$, writing $y=x+x^{-1}$, we have 
\begin{equation}\label{shiki1}
y^2+a\sqrt{p} y + b-2=0.
\end{equation}
Since $|x|=1$, we see $y$ is a real number, so we have 
$a^2p-4(b-2)\geq 0$, which gives the second inequality. 
If we denote by $\alpha$ and $\beta$ the roots of \eqref{shiki1}, 
then these are real and the roots of the equation
\[
X^2-\alpha X+1=0 \text{ or } X^2-\beta X+1=0
\]
are imaginary or $\pm 1$. In both cases, we have 
$\alpha^2,\beta^2\leq 4$. 
So we have 
\[
8 \geq \alpha^2+\beta^2=pa^2-2(b-2),  \qquad 
0 \leq  (4-\alpha^2)(4-\beta^2)=(b+2)^2-4pa^2,
\]
which leads to the third and the fourth inequalities.
\end{proof}

Now assume that $a$, $b \in \Z$ satisfy 
the condition in the above lemma. For any prime, 
we have $|a|\leq 4/\sqrt{p}\leq 4/\sqrt{2}$ so $|a|=0$, $1$, $2$.
If $a=0$, then we have $-2\leq b\leq 2$.
If $(a,b)=(0,\pm 2)$, then we have 
$\Phi(x)=x^4\pm 2px^2 + p^2=(x^2\pm p)^2$. 
If $(a,b)=(0,\pm 1)$, then 
$F(x)=x^4\pm px^2+p^2$.
If $(a,b)=(0,0)$, then $\Phi(x)=x^4+p^2$.
If $|a|=1$, then by the second and the third condition, we have 
$p/2-2\leq b\leq p/4+2$, so we have $p\leq 16$. If $p=7$, then 
$1.5\leq b\leq 3.75$, so $b=2$ or $3$. But the last condition  
$28\leq (b+2)^2$ is not satisfied for $b=2$ and $3$.
In the same way, for $p=11$, we have $b=4$ and for $p=13$, we have 
$b=5$, violating the last condition. So we have $p=2$, $3$, or $5$.
For $p=5$, we have $b=1$, $2$, $3$. By $20\leq (b+2)^2$, we have 
$b=3$. So we have $\Phi(x)=x^4\pm 5x^3+15x^2\pm 5x+25$. 
For $p=3$, we have $b=0$, $1$, $2$. Since $12\leq (b+2)^2$, we have 
$b=2$. This gives $\Phi(x)=x^4\pm 3x^2+6x^2\pm 9x+9=(x^2\pm 3x+3)(x^2+3)$.  
For $p=2$, we have $b=-1$, $0$, $1$, $2$. By $8\leq (b+2)^2$, we have 
$b=1$ or $b=2$. Then $\Phi(x)=x^4\pm 2^3+2x^2\pm 4x+4$ for $b=1$ and 
$\Phi(x)=(x^2\pm 2x+x)(x^2+1)$ for $b=2$. 
Finally we see the case $|a|=2$. This can happen only when $p=2$ or $3$
since $2\leq 4/\sqrt{p}$. 
When $p=3$, then $b=4$ or $5$. Since $48\leq (b+2)^2$, we should have 
$b=5$. So $\Phi(x)=x^4\pm 6x^3+15x^2\pm 18x+9=
(x^2\pm 3x+3)^2$. 
When $p=2$, we have $b=2$, $3$, $4$, and by $32\leq (b+2)^2$, we have 
$b=4$. This means that 
$\Phi(x)=x^4\pm 4x^3+8x^2\pm 8x+4=(x^2\pm 2x+2)^2$. So we have 
only polynomials $\Phi(x)$ that we have listed.

Of course Table \ref{Upi} is a possible list and we are not claiming that these really 
occur always

Now we list all the possible principal polynomials 
appearing in the calculation of $H$ and $T$  
in the following table. Here, in the first column of Table \ref{principalSO5}, 
we write the 
principal polynomials $\phi(x)$ of $\hat{g}\in G^{1}_{\infty}$ coming from 
$g\in U_{npg}(p)$ and $U_{npg}(p)\pi$. The second column is 
the principal polynomial $\tilde{\phi}(x)$ of elements of $SO(5)$ 
corresponding to the image of $\hat{g}$ by the projection 
$Sp(2)\rightarrow SO(5)$. 
The notation $C_{\pm i}$ is Asai's notation in \cite{asai}
Lemma 4.16 to indicate principal polynomials.
As we mentioned, he used $O(5)$ formulation 
instead of $SO(5)$, which causes the
suffix $\pm$ of $C_{\pm i}$, and for $SO(5)$, we need 
one of $C_{i}$ or $C_{-i}$. 

\begin{table}[htb]
\caption{Table of principal polynomials of $SO(5)$}\label{principalSO5}
\begin{tabular}{lll}
$\phi(x)$, $Sp(2)$ & $\hf(x)$, $SO(5)$ & Asai \\
$(x\pm 1)^4$ &  $(x-1)^5$ & $C_{1}$\\
$(x-1)^2(x+1)^2$ &  $(x-1)(x+1)^4$ & $C_{2}$ \\
$(x^2+1)^2$        &  $(x-1)^3(x+1)^2$ & $C_{-3}$ \\
$(x\pm 1)^2(x^2+1)$ &  $(x-1)(x^2+1)^2$ & $C_{4}$ \\
$x^4\pm 2\sqrt{2}x^3+4x^2\pm 2\sqrt{2}x+1$  & $(x-1)^3(x^2+1)$ & $C_{5}$ \\
$(x\pm 1)^2(x^2\mp x+1)$ & $(x-1)(x^2+x+1)^2$ & $C_{6}$ \\
$(x^2\pm x+1)^2$ & $(x-1)^3(x^2+x+1)$ & $C_{7}$ \\
$(x\pm1)^2(x^2\pm x+1)$ & $(x-1)(x^2-x+1)^2$ & $C_{8}$ \\
$x^4\pm 2\sqrt{3}x^3+5x^2\pm 2\sqrt{3}x+1$ & $(x-1)^3(x^2-x+1)$ & $C_{9}$ \\
$x^4+1$ &  $(x-1)(x+1)^2(x^2+1)$ & $C_{-10}$ \\
$x^4-x^2+1$  & $(x-1)(x+1)^2(x^2+x+1)$ & $C_{-11}$ \\
$(x^2+x+1)(x^2-x+1)$ & $(x-1)(x+1)^2(x^2-x+1)$ & $C_{-12}$ \\
$x^4\pm \sqrt{2}x^3+2x^2\pm \sqrt{2}x+1$ & $(x-1)(x^4+1)$ & $C_{13}$ \\
$x^4\pm x^3+x^2\pm x+1$ &  $(x-1)(x^4+x^3+x^2+x+1)$ & $C_{14}$ \\
$x^4\pm \sqrt{5}x^3+3x^2\pm \sqrt{5}x+1$ & $(x-1)(x^4-x^3+x^2-x+1)$ & $C_{15}$ \\
$(x^2+1)(x^2\pm x+1)$ & $(x-1)(x^4-x^2+1)$ & $C_{16}$ \\
$x^4\pm \sqrt{6}x^3+3x^2\pm \sqrt{6}x+1$ & $(x-1)(x^2+1)(x^2-x+1)$ & $C_{17}$ \\
$x^4\pm \sqrt{2}x^3+x^2\pm \sqrt{2}x+1$ & $(x-1)(x^2+1)(x^2+x+1)$ & $C_{18}$ \\
$x^4\pm \sqrt{3}x^3+2x^2\pm \sqrt{3}x+1$ & $(x-1)(x^2+x+1)(x^2-x+1)$ & $C_{-19}$ 
\end{tabular}
\end{table}
The polynomials in the table below are those that do not appear in the usual 
dimension formula of the class number $H$ of the non-principal 
genus coming from elements of some $\Gamma_i$ 
and newly needed for the type number.
\begin{table}[htb]
\caption{Principal polynomials of $Sp(2)$}\label{principalSp2}
\begin{tabular}{lll}
$\hat{g}\in Sp(2)$ & $g\in G$ & Asai \\
$x^4\pm 2\sqrt{2}x^3+4x^2\pm 2\sqrt{2}x+1$ & $(x^2\pm 2x+2)^2$ & $C_5$ \\
$x^4\pm 2\sqrt{3}x^3+5x^2\pm 2\sqrt{3}x+1$ & $(x^2\pm 3x+3)^2$ & $C_9$ \\
$x^4\pm \sqrt{2}x^3+2x^2\pm \sqrt{2}x+1$ & $(x^2\pm 2x+2)(x^2+2)$ & $C_{13}$ \\
$x^4\pm \sqrt{5}x^3+3x^2\pm \sqrt{5}x+1$ & $x^4\pm 5x^3+15x^2\pm 25x+25$ & $C_{15}$ \\
$x^4\pm \sqrt{6}x^3+3x^2\pm \sqrt{6}x+1$ & $x^4+6x^3+18x^2+36x+36$ & $C_{17}$ \\
$x^4\pm \sqrt{2}x^3+x^2\pm \sqrt{2}x+1$ & $x^4\pm 2x^3+2x^2\pm 4x+4$ & $C_{18}$ \\
$x^4\pm \sqrt{3}x^3+2x^2\pm \sqrt{3}x+1$ & $(x^2\pm 3x+3)(x^2+3)$  & $C_{-19}$
\end{tabular}
\end{table}
As can be seen, these appear only when $p=2$, $3$ or $5$.
Actually, the contributions to $T$ from $C_9$, $C_{13}$, 
$C_{17}$, $C_{18}$ are known to be all zero in Asai \cite{asai} Theorem 4.17 
for $d=p$ with $p\neq 2$ in the notation there,
so we do not need these. For $p=2$, we do not use Asai's result anyway.
The case $p=2$, $3$ will be treated separately in the next 
section and the case $p=5$ is included in the proof below.
 
\begin{proof}[Proof of Theorem \ref{compactdim}]
Here we prove Theorem \ref{compactdim} under the assumption that $p\neq 2$, $3$.
By Corollary \ref{maincor}, we have 
\[
Tr(R_{f_1,f_2}(R(\pi),\phi)=\chi_{f_1,f_2}(\phi)Tr(R_{0,0}(R(\pi),\phi),
\]
so if we calculate $Tr(R_{0,0}(R(\pi),\phi)$ for each principal 
polynomial $\phi$, then we obtain the case of general $(f_1,f_2)$ by the formula
for $\chi_{f_1,f_2}(\phi)$ given later.
So we assume that $f_1=f_2=0$ and $\rho_{f_1,f_2}$ is the 
trivial representation. 
All we should do is to calculate a contribution to 
$Tr(R(\pi)):=Tr(R_{0,0}(\pi))=2T-H$
for each principal polynomial $\phi(\pm x)$ or $\hf(x)$.
When $p\neq 2$, the formula for $T(\phi)$ of $\cL_{npg}$ is equal
$T(\hf)$ by Lemma \ref{quinaryvshermitian}.
To calculate $T(\hf)$, we use the class number formula  
given in Asai \cite{asai} Theorem 4.17. On the other hand, contribution of 
each conjugacy classes to the class number $H$ has been given in 
Theorem in \cite{hashimotoibu} II (which is reproduced 
in the appendix for prime discriminant case.)
The list of principal polynomials are given in Table \ref{principalSO5}, so 
we calculate $T(\tilde{\phi})$ for each $C_i$. 
For $p\neq 2$, $3$, the conjugacy classes $C_5$, $C_9$, $C_{13}$, $C_{17}$,
$C_{18}$, $C_{-19}$ in Tables \ref{principalSO5} and \ref{principalSp2}
does not appear. So the remaining cases are 
$C_{1}$, $C_2$, $C_{-3}$, $C_4$, $C_6$, $C_7$, $C_8$, $C_{-10}$, $C_{-11}$, $C_{12}$
$C_{14}$, $C_{15}$, and $C_{16}$. The corresponding characters are $\chi_1$, 
$\chi_2$, $\chi_6$, $\chi_3$, $\chi_4$, $\chi_7$, $\chi_5$, $\chi_{11}$, $\chi_{12}$,
$\chi_9$, $\chi_{10}$, $\chi_{13}$ and $\chi_8$, respectively.
We must subtract contribution of each conjugacy class to $H$ from $2T$.
This contribution to $H$ is given 
by Theorem \ref{nonprincipaldim} in section \ref{appendix}, and under the assumption that 
$p\neq 2$, $3$, only conjugacy classes $C_1$, $C_{-3}$, $C_7$, 
$C_{-10}$, $C_{-11}$ and $C_{14}$ in Table \ref{principalSO5}
contribute to $H$. The contribution of each $C_i$ to $T$ is given in \cite{asai}
Theorem 4.17, so we will see these. 
In the notation of Theorem 4.17, we have $d=p$ and $\det(L)=2p$
and $\epsilon_v$ is the Hasse invariant of $L$ for each place $v$
in the sense of Serre \cite{serre}. 
For our lattice, we have $\epsilon_p=\epsilon_2=-1$ and $\epsilon_q=1$ 
for any other prime $q$,
assuming that $p\neq 2$. Here we have $\epsilon_{\infty}=1$ since $L$ is 
positive definite and if $q$ is prime to the discriminant $2p$  
then of course we have $\epsilon_q=1$.
The fact that $\epsilon_2=-1$ 
can be checked by the fact that  
the lattice in question at $2$ is isometric to 
\[
2p\perp \begin{pmatrix} 0 & 1 \\ 1 & 0 \end{pmatrix}\perp\begin{pmatrix} 0 & 1 \\ 1 & 0 \end{pmatrix}
\]
and this is diagonalized to $diag(2p,2,-2,2,-2)$. We have $\epsilon_p=-1$ 
by the product formula of the Hasse invariant.
In \cite{asai}, the notations $\epsilon(n)$ and $\omega(n)$ mean, 
as in \cite{serre} 3.2, elements in $\{0,1\}$
such that  
\begin{align*}
& \epsilon(n)\equiv (n-1)/2\bmod 2 \\ 
& \omega(n)\equiv (n^2-1)/8 \bmod 2.
\end{align*}
So for a prime $p$ we have 
\[
\left(\frac{-1}{p}\right)=(-1)^{\epsilon(p)}, \qquad 
\left(\frac{2}{p}\right)=(-1)^{\omega(p)}.
\]
We also define 
\[
\fM(m)=\frac{h(-m)}{2^t\cdot u}
\]
where $h(-m)$ is the class number of $\Q(\sqrt{-m})$, $u$ is the order of the 
unit group of the ring of integers of $Q(\sqrt{-m})$ and 
$t$ be the number of the divisors of the discriminant of $\Q(\sqrt{-m})$.
Below, we use freely the notation used in 
\cite{asai} from p. 284 to p. 290 for each conjugacy class $C_i$.
For the general formulas for $h(L,C_i)$ we use here, please refer the corresponding 
pages of \cite{asai}
since we do not copy them here to avoid complication.
For $C_1$, we have $d=p$, $\epsilon_p=-1$, and $e=1$, so 
the contribution to $T$ is $2/(2^8\cdot 3^2\cdot 5)(p^2-1)=(p^2-1)/5760$ 
and the contribution to $H$ is $(p^2-1)/2880$ 
as in Theorem \ref{nonprincipaldim} from the coefficient of $\chi_1$, 
so for $2T-H$, we have   
\[
2\frac{p^2-1}{5760}-\frac{p^2-1}{2880}=0.
\]
For $C_2$, in the notation of \cite{asai} p. 284 (2), we 
have $e=\delta=1$ and the contribution to $T$ is 
\[
\begin{array}{ll}
\dfrac{1}{2^6\cdot 3}\left(9-2\left(\frac{2}{p}\right)\right) 
& \text{ if }p\equiv 1 \bmod 4 \\[1ex]
\dfrac{1}{2^6\cdot 3}B_{2,\chi} & \text{ if } p\equiv 3 \bmod 4,
\end{array}
\]
and the contribution to $H$ of finite order elements with principal 
polynomial $(x-1)^2(x+1)^2$ is $0$ if $p\neq 2$, so 
the contribution to $2T-H$ is the twice of the above values.
For $C_3$, we have $\delta=1$ or $p$ and 
$S_1=S_2=\emptyset$ in p.285 of \cite{asai}.
We have $a=1$ and $-\left(\frac{2}{p}\right)$ for 
$\delta=1$, $p$, respectively. (By definition, the product on the empty set 
is $1$ in the definition of $a$ in \cite{asai},) 
We have 
\[
\fM(1)=\frac{1}{8},\quad \fM(p)=h(-p)\times 
\left\{\begin{array}{cc} 1/8 & p\equiv 1 \bmod 4, \\
1/4 & \text{ if }p\equiv 3 \bmod 4.
\end{array}\right.
\]
For $\delta=1$, we have $(-1)^{\epsilon(p)+\omega(1)}=\left(\frac{-1}{p}\right)$. 
So in this case we have 
\[
A=\frac{1}{4}\left(4+\left(\frac{-1}{p}\right)\right)) 
\]
where $A$ is as in \cite{asai} p. 285 line 6, 
and 
\begin{align*}
\fM(1)\times \frac{1}{6}\times A\frac{p+\epsilon_p(-1/p)}{2}
& =
\frac{1}{8}\times \frac{1}{12}\left(p-\left(\frac{-1}{p}\right)\right)
\times \left(1+\frac{1}{2^2}\left(\frac{-1}{p}\right)\right)
\\ 
& = \frac{1}{2^5\cdot 3}\left(p-\left(\frac{-1}{p}\right)\right)+\frac{1}{2^7\cdot 3}
\left(p\left(\frac{-1}{p}\right)-1\right).
\end{align*}
The contribution of the part $\delta=1$ to $2T$ is the four times of the above and 
is equal to  
\[
\frac{1}{2^3\cdot 3}\left(p-\left(\frac{-1}{p}\right)\right)+\frac{1}{2^5\cdot 3}\left(p\left(\frac{-1}{p}\right)-1\right),
\]
which is exactly the coefficients of $\chi_6$ in $H$ in Theorem \ref{nonprincipaldim}.
So these parts cancel with each other.
The remaining part is the case $\delta=p$. 
When $\delta=p\equiv 1 \bmod 4$, We have 
\[
a(-1)^{\epsilon(p)+\omega(p)}=-\left(\frac{2}{p}\right)\left(\frac{-2}{p}\right)=-1,
\]
so 
\[
A=2^{-2}(4-1)=\frac{3}{2^2}
\]
in this case.
If $p\equiv 3 \bmod 4$, then we see $1+a(-1)^{\omega(p)}=0$, so we have 
\[
A=\frac{1}{2^3\cdot 3}\left(1-\left(\frac{2}{p}\right)\right).
\]
Gathering these data, we have 
\[
h(L.C_3,p)=h(-p)\times 
\left\{\begin{array}{ll}2^{-6} & \text{ if } p\equiv 1 \bmod 4 \\
2^{-6}\left(1-\left(\frac{2}{p}\right)\right) & \text{ if } p\equiv 3 \bmod 4.
\end{array}\right.
\]
So the contribution $4h(L,C_3,p)$ to $2T$ is exactly the coefficient of 
$\chi_6$ in Theorem \ref{compactdim}.
In the case of $C_4$, both conditions (i) and (ii) of (4) in \cite{asai} p. 285 
do not hold, so the contribution to $T$ is $0$, The same for $H$. So this term gives $0$.
In the case $C_6$ either, the conditions (i) and (ii) do not hold and 
the contributions to $T$ and $H$ are $0$.
For $C_7$, noting that $\left(\frac{p}{3}\right)=\left(\frac{-3}{p}\right)$,
we have $A=3+\left(\frac{-3}{p}\right)$ in  (7) of \cite{asai} p. 286 and 
\[
h(L,C_7)=\frac{1}{2^5\cdot 3^2}\left(p-\left(\frac{-3}{p}\right)\right)
\left(3+\left(\frac{-3}{p}\right)\right).
\]
We see that $4h(L.C_7)$ (the contribution to $4T$) is exactly 
the coefficient of $\chi_7$ of $H$ in Theorem \ref{nonprincipaldim},
so the contribution to $2T-H$ is $0$. 
In the case of $C_8$, the condition (ii) in (8) of \cite{asai} p.286 is not
satisfied and the contribution to $T$ is $0$. The contribution of $\phi_5(x)$ to 
$H$ is also $0$, so no contribution to $2T-H$.
In the case of $C_{-10}$, we have $\delta=1$ or $p$ in \cite{asai} p, 287 (10) (i) and 
if $\delta=1$, then by the condition (ii), the contribution is $0$ unless 
$\left(\frac{2}{p}\right)=-1$, i,e, unless $p\equiv 3, 5 \bmod 8$.
In this case, we have $a=0$ and $\fM(2)= h(\sqrt{-2})/2\cdot 2=1/4$. 
So the contribution of the case $\delta=1$ to $2T$ is 
$4h(L,C_{10},2)=4\times (1/2^2)\times (1/4)=1/4$. 
for $p\equiv \pm 3 \bmod 8$. But the coefficient of 
$\chi_{11}$ in $H$ is $(1/8)(1-(2/p))$ so this is $0$ and $1/4$ for $p\equiv \pm 1 
\bmod 8$ and $p\equiv \pm 3 \bmod 8$, respectively. 
So the contribution of these parts to $2T-H$ is $0$.
The case $\delta=p$ remains, and in this case, (ii) in \cite{asai} p. 287 (10) 
means no condition.
We have $\fM(2p)=h(\sqrt{-2p})/2^2\cdot 2$ and 
$h(L,C_{10},2p)=h(\sqrt{-2p})/2^5$, so the contribution to 
$2T$ is $4h(L.C_{10},2p)=h(\sqrt{-2p})/2^3$. This is nothing but the 
coefficient of $\chi_{11}$ in Theorem \ref{compactdim}.
In the case of $C_{11}$, in \cite{asai} p. 287 (11), we have 
$a=b=0$, $c=-1$. We have $\delta=1$ or $p$. If $\delta=p$, then 
$A=1-1=0$, so $h(L,C_{11},p)=0$. So we assume $\delta=1$ hereafter.
Then $A=1-\left(\frac{p}{3}\right)=1-\left(\frac{-3}{p}\right)$,
and by the condition (ii), we have $-1=\left(\frac{3}{p}\right)$.
We have $\fM(1)=1/8$, so the contribution to $2T$ is   
\[
4h(L,C_{11},1)= \frac{1}{12}\left(1-\left(\frac{-3}{p}\right)\right)
\]
if $(3/p)=-1$ and $0$ otherwise.
In other words, this is equal to $1/6$ if $p\equiv 5 \bmod 12$ and 
$0$ otherwise.
On the other hand, the coefficient of $\chi_{12}$ in $H$ is 
\[
\frac{1}{24}\left(1-\left(\frac{3}{p}\right)+\left(\frac{-1}{p}\right)-\left(\frac{-3}{p}\right)\right).
\]
This is also $1/6$ if $p\equiv 5 \bmod 12$ and $0$ otherwise. So 
the contribution to $2T-H$ is $0$. 
In the case of $C_{12}$, the coefficient of $\chi_9$ in $H$ is $0$. 
In \cite{asai} p. 288 (12), we always have $a=b=0$ and $c=-1$ in our setting.
By the condition (i), we have $\delta=1$, $3$, $p$ or $3p$.
If $3\nmid \delta$, we have $A=0$ by definition,
so we should have $\delta=3$ or $3p$. If $\delta=3$, this contradicts to 
the condition (ii) since $\epsilon_p=-1$ but $(3^2/p)=1$. 
So the only possible case is $\delta=3p$. In this case we have $A=2$.
We have 
\[
\fM(3p)=h(\sqrt{-3p})\times 
\left\{\begin{array}{cc} 2^{-3} & \text{ if } p\equiv 1 \bmod 4, \\
2^{-4} & \text{ if } p\equiv 3 \bmod 4.
\end{array}\right.
\]
We first consider the case $p\equiv 1\bmod 4$. 
Then since we have $(-1)^{\omega(3p)}=-\left(\frac{2}{p}\right)$, 
we have 
\[
h(L,C_{12},3p)=\frac{h(\sqrt{-3p})}{2^4\cdot 3}
\left(3+\left(\frac{2}{p}\right)\right).
\]
If $p\equiv 3 \bmod 4$, then 
\[
h(L,C_{12},3p)=\frac{h(\sqrt{-3p})}{2^4\cdot 3}.
\]
So the contribution to $2T-H$ given by $4h(L,C_{12},3p)$ is 
as in Theorem \ref{compactdim}.
In the case of $C_{14}$, then in \cite{asai} p. 289 (14), we have 
$a=0$ and if $p\neq 5$, then we should have $(5/p)=-1$, that is, 
$p\equiv 2$, $3 \bmod 5$ and in this case $h(L.C_{14})=1/10$.
If $p=5$, then $h(L,C_{14})=1/20$ 
So the contribution to $2T$ is 
$4h(L,C_{14})=1/5$, $2/5$ and $0$ for $p=5$, $p\equiv 2,3 \bmod 5$, 
$p\equiv \pm 1 \bmod 5$, respectively. 
This cancels with the corresponding 
coefficients of $\chi_{10}$ in $H$.
In the case of $C_{-15}$, by \cite{asai} p, 289 (15), 
the contribution to $T$ vanishes 
if $p\neq 5$ and is $1/10$ if $p=5$. 
So the contribution to $2T$ for $p=5$ is $1/5$ and $0$ otherwise. 
On the other hand, the contribution to $H$ is $0$ since 
the polynomial $x^4\pm \sqrt{5}x^3+3x^2\pm \sqrt{5}x+1$ does not appear for $H$.
So the contribution to $2T-H$ is as stated in Theorem \ref{compactdim}
for $\chi_{13}$. In the case of $C_{16}$, by \cite{asai} p. 289 (16), 
there is no contribution to $T$, By Theorem \ref{nonprincipaldim},
there is no contribution 
from $\chi_8$ to $H$ either. So the contribution to $2T-H$ is $0$. 
\end{proof}

\section{The case $p=2$ and $3$}\label{p23}
In this section, we give a formula for $Tr(R_{f_1,f_2}(\pi))$ for general 
$(f_1,f_2)$ for $p=2$ and $p=3$ by a different method. Applying \cite{dummigan}
and Theorem \ref{paramodular}, this also gives 
a dimension formula for $S_{k,j}^{\pm}(K(p))$ for $p=2$, $3$ and $k\geq 3$ as before.
For the scalar valued cases $\dim S_k^{\pm}(K(2))$ and 
$\dim S_k^{\pm}(K(3))$, this means that we reprove the formula  
that has already been known 
in \cite{ibuonodera}, \cite{dern}, \cite{pooryuenibu}. 
 
For the case $p=3$, we can calculate $Tr(R_{f_1,f_2}(\pi))$ 
by the same method as in 
the proof of Theorem \ref{compactdim}, but if $p=2$, we have 
a problem since the correspondence between $T$ and the class
number of quinary lattices explained before is not known in 
\cite{ibutypenumber}. So we need a different method.
Here we use more direct method for both $p=2$ and $3$.
We know that the class number $H=1$ for the non-principal genus $\cL_{npg}$  
for both $p=2$ and $3$ (See \cite{hashimotoibu}), so 
$G_A=U_{npg}G$ and $\Gamma_1=G\cap U_{npg}$. We have 
$\#(\Gamma_1)=1920$ for $p=2$ and $720$ for $p=3$. 
Besides, we have $R(\pi)=U_{npg}\pi$ for $\pi \in O$, 
and since $\pi 1_2 \in G$, we see that  
\[
G\cap U_{npg}\pi =\pi(G\cap U_{npg})= \pi\Gamma_1.
\]
So in order to obtain $Tr(R(\pi))$, all we should do is to 
count the number of elements 
$\gamma \in \pi \Gamma_1$ for each fixed principal polynomial.
We can concretely describe these elements by a direct calculation.
First we give a table of principal polynomials and number of corresponding elements 
in $\pi \Gamma$, and then state our theorem 

\begin{lemma}\label{table23}
For a fixed polynomial $\Phi(x)$, the number of elements $\gamma \in \pi \Gamma_1$ 
such that $\Phi(x)$ is their principal polynomial is given in the second 
column in Tables \ref{p2} and \ref{p3} for $p=2$, $3$. 
We put $\phi(x)=p^{-2}\Phi(x/\sqrt{p})$ as before. 
The last column is the character $Tr(\rho_{f_1,f_2}(g))$ of elements
of the corresponding row, where the notation $\chi_{i}$ is explained in section \ref{appendix}. 
\end{lemma}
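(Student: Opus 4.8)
The plan is to exploit the fact, recalled just above the statement, that the non-principal genus $\cL_{npg}$ has class number $H=1$ for both $p=2$ and $p=3$. This collapses the sum over $i$ in Corollary \ref{maincor} to a single term, so the trace contribution attached to each principal polynomial is governed entirely by the one finite group $\Gamma_1=G\cap U_{npg}$ together with the coset $\pi\Gamma_1=G\cap U_{npg}\pi$. Here every element of $\pi\Gamma_1$ has reduced norm $p$ (since $\Gamma_1\subset G^1$ and $(\pi 1_2)(\pi 1_2)^*=\pi\overline{\pi}\,1_2=-\pi^2\,1_2=p\,1_2$ gives $n(\pi 1_2)=p$), and left multiplication by the invertible element $\pi 1_2\in G$ is a bijection $\Gamma_1\to\pi\Gamma_1$. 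As $\#(\Gamma_1)=1920$ for $p=2$ and $720$ for $p=3$, the counting problem is genuinely finite and it suffices to run through $\Gamma_1$ once.

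First I would fix an explicit model of $\Gamma_1$ as a group of quaternionic $2\times 2$ matrices, using the description of the automorphism group of the maximal lattice in $\cL_{npg}$ from \cite{hashimotoibu}. With such a model, each $g=\pi\gamma\in\pi\Gamma_1$ can be pushed through the standard embedding $M_2(B\otimes_{\Q}\R)\hookrightarrow M_4(\C)$ of the Hamilton quaternion matrices, and its principal polynomial $\Phi(x)$, the degree-four characteristic polynomial, read off. Because $n(g)=p$ throughout, each $\Phi$ is forced to have the shape $\Phi(x)=x^4+pax^3+pbx^2+p^2ax+p^2$ and hence lies in the finite list already pinned down in Table \ref{Upi} by the inequalities of the preceding lemma; this is an a priori check that only the polynomials occurring in the two tables can arise. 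For each such $\Phi$ I would then record how many $g\in\pi\Gamma_1$ produce it, which is the second column of the respective table, and pass to the normalized polynomial $\phi(x)=p^{-2}\Phi(\sqrt{p}\,x)$, the principal polynomial of $\hg$.

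The last column is then immediate: the value $Tr(\rho_{f_1,f_2}(g))$ depends only on $\phi(\pm x)$, that is on the eigenvalues of $\hg\in Sp(2)$, and is supplied by the Weyl character formula (\cite{weyl} Theorem 7.8 E), whose explicit evaluations for each principal polynomial are the symbols $\chi_i$ recorded in the appendix. The main obstacle is simply the bookkeeping of the enumeration itself: forming the up-to-$1920$ products $\pi\gamma$, computing their characteristic polynomials reliably through $M_4(\C)$, and collating coincident polynomials, remembering that $\phi(x)$ and $\phi(-x)$ must be merged because $\rho_{f_1,f_2}(\pm 1_2)=1$. The labor can be cut considerably by first partitioning $\pi\Gamma_1$ into $\Gamma_1$-conjugacy classes: this partition is legitimate because $U_{npg}\pi U_{npg}=U_{npg}\pi$ forces $\Gamma_1$-conjugation to preserve $\pi\Gamma_1$, and conjugate elements share a principal polynomial, so only one representative per class need be examined and its class size tallied. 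Matching these multiplicities against the second columns of Tables \ref{p2} and \ref{p3} then completes the verification.
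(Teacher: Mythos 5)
Your proposal is correct and follows essentially the same route as the paper: exploit $H=1$ to reduce to the single finite group $\Gamma_1$, identify $G\cap U_{npg}\pi=\pi\Gamma_1$, fix an explicit quaternionic matrix model of $\Gamma_1$, and enumerate the principal polynomials of the elements $\pi\gamma$ (the paper computes these via an explicit formula in terms of the reduced traces and norms of the matrix entries, Lemma \ref{charpoly}, rather than through $M_4(\C)$, but this is only a computational convenience). The remaining content in both cases is the finite bookkeeping itself.
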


\begin{table}[htb]
\caption{Number of elements for $p=2$}\label{p2}
\begin{tabular}{lcll}
$\Phi(x)$ & $\pi\Gamma_1$ & $\phi(x)$ & character \\
$(x^2-2)^2$ & $40$ & $(x-1)^2(x+1)^2$ & $\chi_2$ \\
$(x^2+2)^2$ & $120$ & $(x^2+1)^2$ & $\chi_6$ \\
$x^4+2x^2+4$ & $320$ & $x^4+x^2+1$ & $\chi_9$ \\
$x^4+4$ & $600$ & $x^4+1$ & $\chi_{11}$ \\
$(x^2\pm 2x+2)^2$ & $40$ & $(x^2\pm \sqrt{2}x+1)^2$ & $\chi_{14}$ \\
$x^4\pm 2x^3+2x^2\pm 4x+4$ & $320$ & $x^4\pm \sqrt{2}x^3+x^2+\sqrt{2}x+1$ & $\chi_{15}$
\\
$(x^2\pm 2x+2)(x^2+2)$ & $480$ & $(x^2\pm \sqrt{2}x+1)(x^2+1)$ & $\chi_{16}$
\end{tabular}
\end{table}

\begin{table}[htb]
\caption{Number of elements for $p=3$}\label{p3}
\begin{tabular}{lcll}
$\Phi(x)$ & $\pi \Gamma_1$ & $\phi(x)$ & character \\
$(x^2-3)^2$ & $30$ & $(x-1)^2(x+1)^2$ & $\chi_2$ \\
$(x^2+3)^2$ & $30$ & $(x^2+1)$ & $\chi_6$  \\
$(x^2+3x+3)(x^2+3)$ & $120$ & $(x^2+\sqrt{3}x+1)(x^2+1)$ & $\chi_{17}$ \\
$(x^2-3x+3)(x^2+3)$ & $120$ & $(x^2-\sqrt{3}x+1)(x^2+1)$ & $\chi_{17}$ \\
$x^4+9$ & $180$ & $x^4+1$ & $\chi_{11}$ \\
$(x^4+3x^2+9)$ & $240$ & $x^4+x^2+1$ & $\chi_{9}$  
\end{tabular}
\end{table}

\begin{thm}\label{compactp2p3}
For $p=2$, we have 
\[
Tr(R_{f_1,f_2}(\pi))=\frac{1}{48}\chi_2+\frac{1}{16}\chi_6+\frac{1}{6}\chi_9 
+\frac{5}{16}\chi_{11}
+\frac{1}{48}\chi_{14}+\frac{1}{6}\chi_{15} + \frac{1}{4}\chi_{16}.
\]
For $p=3$, we have 
\[
Tr(R_{f_1,f_2}(\pi))=\frac{1}{24}\chi_2+\frac{1}{24}\chi_6+\frac{1}{3}\chi_9
+\frac{1}{4}\chi_{11}+\frac{1}{3}\chi_{17}.
\]
\end{thm}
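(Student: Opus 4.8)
The plan is to combine the trace identity of Lemma~\ref{discrettrace} with the fact that the class number of $\cL_{npg}$ equals $1$ for $p=2,3$, and then read off the coefficients directly from the enumeration in Lemma~\ref{table23}. Since $H=1$, I may take the single double-coset representative $g_1=1_2$, so that
\[
T_{11}=G\cap U_{npg}(p)\pi U_{npg}(p)=G\cap U_{npg}(p)\pi=\pi\Gamma_1,
\qquad \Gamma_1=G\cap U_{npg}(p),
\]
and Lemma~\ref{discrettrace} collapses to the single summand
\[
Tr(R_{f_1,f_2}(\pi))=\frac{1}{\#(\Gamma_1)}\sum_{x\in\pi\Gamma_1}Tr(\rho_{f_1,f_2}(x)).
\]

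Next I would group the elements of $\pi\Gamma_1$ by their principal polynomial. Every $x=\pi\gamma$ with $\gamma\in\Gamma_1\subset G^1$ has $n(x)=n(\pi)=p$, so $x/\sqrt{p}\in G^1_\infty$ and, as recorded before Corollary~\ref{maincor}, the value $Tr(\rho_{f_1,f_2}(x))=\chi_{f_1,f_2}(\phi)$ depends only on the principal polynomial $\phi(x)=p^{-2}\Phi(\sqrt{p}\,x)$ of $x/\sqrt{p}$. Writing $N(\phi)$ for the number of $x\in\pi\Gamma_1$ with principal polynomial $\phi(\pm x)$, the sum becomes $\sum_\phi N(\phi)\,\chi_{f_1,f_2}(\phi)$, and these counts $N(\phi)$ are exactly the entries in the second column of Tables~\ref{p2} and~\ref{p3} supplied by Lemma~\ref{table23} (with the two sign variants merged into one character, as for the two $\chi_{17}$ rows when $p=3$).

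It then remains to divide by $\#(\Gamma_1)=1920$ for $p=2$ and $\#(\Gamma_1)=720$ for $p=3$ and collect terms. For $p=2$ this turns the counts $40,120,320,600,40,320,480$ into the coefficients $\tfrac1{48},\tfrac1{16},\tfrac16,\tfrac5{16},\tfrac1{48},\tfrac16,\tfrac14$ of $\chi_2,\chi_6,\chi_9,\chi_{11},\chi_{14},\chi_{15},\chi_{16}$; for $p=3$ the counts $30,30,240,180,240$ (the third obtained by adding the two $\chi_{17}$ rows $120+120$) become $\tfrac1{24},\tfrac1{24},\tfrac13,\tfrac14,\tfrac13$ of $\chi_2,\chi_6,\chi_{17},\chi_{11},\chi_9$, which are precisely the asserted formulas. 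As a consistency check, specializing to $f_1=f_2=0$ makes every $\chi_i=1$, and since the tabulated counts sum to $\#(\Gamma_1)$ one obtains $Tr(R_{0,0}(\pi))=1$, compatible with $2T=H+Tr(R_{0,0}(\pi))$ and $H=T=1$.

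The genuine content, and the step I expect to be the main obstacle, is Lemma~\ref{table23} itself---the explicit enumeration. To establish it I would fix a concrete maximal order $O\subset B$ containing $\pi$ with $\pi^2=-p$, pin down a representative lattice $L$ of the non-principal genus, and identify the finite automorphism group $\Gamma_1=G\cap U_{npg}(p)$ of order $1920$ (resp. $720$) explicitly as a finite subgroup of $G$. For each of its elements $\gamma$ I would compute the characteristic polynomial of the image of $\pi\gamma$ in $M_4(\C)$ and tally the results; since the possible principal polynomials are constrained to the finite list in Table~\ref{Upi} (specialized to $p=2$, resp. $p=3$), the enumeration is complete and the partition of $\pi\Gamma_1$ by principal polynomial is forced. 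The difficulty is purely organizational: realizing $\Gamma_1$ in usable coordinates and carrying out the finite but sizeable bookkeeping so that the seven (resp. six) tallies come out exactly as in the tables.
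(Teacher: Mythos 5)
Your proposal is correct and follows essentially the same route as the paper: since $H=1$ for $p=2,3$, the trace reduces to $\frac{1}{\#(\Gamma_1)}\sum_{x\in\pi\Gamma_1}Tr(\rho_{f_1,f_2}(x))$, and the theorem then falls out of the tallies in Lemma~\ref{table23} by dividing the counts by $1920$ (resp.\ $720$), exactly as in Section~\ref{p23}. Your plan for establishing Lemma~\ref{table23} — fixing a concrete maximal order containing $\pi$, realizing $\Gamma_1$ explicitly, and tallying characteristic polynomials of $\pi\gamma$ — is precisely the paper's direct enumeration (aided there by Lemma~\ref{charpoly}).
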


Theorem \ref{compactp2p3} is an easy corollary of Lemma \ref{table23},
so we prove the lemma. 

Before proving Lemma \ref{table23},
we write a formula for principal polynomial 
by using matrix coefficients.
\begin{lemma}\label{charpoly}
For $g=\begin{pmatrix} a & b  \\ c  & d \end{pmatrix} \in G$ with 
$gg^*=n(g)1_2$, the principal polynomial $\Phi(x)$ of $g$ is given by 
\begin{multline*}
\Phi(x)=x^4-(Tr(a)+Tr(d))x^3+
\\ (Tr(a)Tr(d)-N(b+\overline{c})+2n(g))x^2
-(Tr(a)+Tr(d))n(g)x+n(g)^2
\end{multline*}
We also have 
\[
\Phi(x)=x^4+Tr(g)x^3+(1/2)(Tr(g)^2-Tr(g^2))x^2-Tr(g)n(g)x+n(g)^2.
\]
\end{lemma}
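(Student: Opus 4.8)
The plan is to identify the principal polynomial $\Phi(x)$ with the reduced characteristic polynomial of $g$ viewed as an element of the central simple $\Q$-algebra $M_2(B)$, whose reduced degree is $4$. Since $B\otimes_\Q\R=\mathbb H$ and $\mathbb H\hookrightarrow M_2(\C)$, the embedding $M_2(B)\hookrightarrow M_2(\mathbb H)\hookrightarrow M_4(\C)$ is a splitting, so this reduced characteristic polynomial is exactly the degree $4$ characteristic polynomial used to define $\Phi(x)$, and its coefficients are, up to sign, the elementary symmetric functions of the four eigenvalues. Three of the four coefficients are then almost immediate. The coefficient of $x^3$ is $-\mathrm{Trd}(g)$, and additivity of the reduced trace over the diagonal blocks gives $\mathrm{Trd}(g)=Tr(a)+Tr(d)$. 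The constant term is $\mathrm{Nrd}(g)$; applying $\mathrm{Nrd}$ to $gg^*=n(g)1_2$ and using that the main involution preserves the reduced norm, so $\mathrm{Nrd}(g^*)=\mathrm{Nrd}(g)\in\Q$, together with positivity coming from $B$ being definite, forces $\mathrm{Nrd}(g)=n(g)^2$.

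Next I would fix the coefficient of $x$ by the functional equation attached to the similitude relation. The map $g\mapsto g^*={}^t\overline g$ is a $\Q$-algebra anti-involution of $M_2(B)$, hence satisfies $\mathrm{Trd}((g^*)^n)=\mathrm{Trd}(g^n)$ for all $n$ and so leaves $\Phi(x)$ unchanged; on the other hand $gg^*=n(g)1_2$ gives $g^*=n(g)g^{-1}$, whose eigenvalues are $n(g)/\lambda_i$, where $\lambda_1,\dots,\lambda_4$ are the eigenvalues of $g$. Comparing, the multiset of eigenvalues is stable under $\lambda\mapsto n(g)/\lambda$, which makes $\Phi(x)$ palindromic up to this scaling; writing $\Phi(x)=x^4+c_3x^3+c_2x^2+c_1x+c_0$, this yields $c_0=n(g)^2$, already known, and $c_1=n(g)\,c_3=-(Tr(a)+Tr(d))\,n(g)$, matching the first displayed form.

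This leaves only the middle coefficient $c_2$, which is where the two displayed forms meet. For this I would use Newton's identity $c_2=\tfrac12\bigl(\mathrm{Trd}(g)^2-\mathrm{Trd}(g^2)\bigr)$, which is precisely the middle coefficient of the second displayed formula once $Tr(g)$ is read as $\mathrm{Trd}(g)$. To reach the first form I would expand $g^2$, note that $\mathrm{Trd}(g^2)=Tr(a^2)+Tr(d^2)+2\,Tr(bc)$ using $Tr(bc)=Tr(cb)$, substitute the quaternion identity $Tr(q^2)=Tr(q)^2-2N(q)$, and simplify to $c_2=Tr(a)Tr(d)+N(a)+N(d)-Tr(bc)$. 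Finally I would feed in the similitude relations read off from the diagonal of $gg^*=n(g)1_2$, namely $N(a)+N(b)=N(c)+N(d)=n(g)$, and the identity $N(b+\overline c)=N(b)+N(c)+Tr(bc)$; these rewrite $N(a)+N(d)-Tr(bc)$ as $2n(g)-N(b+\overline c)$, giving exactly $Tr(a)Tr(d)-N(b+\overline c)+2n(g)$.

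The one genuinely computational step is this last reconciliation of $c_2$: the outer three coefficients are forced by the reduced norm/trace formalism and the functional equation with essentially no computation, whereas turning the intrinsic expression $\tfrac12(\mathrm{Trd}(g)^2-\mathrm{Trd}(g^2))$ into the matrix-coefficient form is the only place where the defining relations $N(a)+N(b)=N(c)+N(d)=n(g)$ are actually used, so I would concentrate the careful bookkeeping of quaternionic trace and norm identities there.
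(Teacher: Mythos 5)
Your proof is correct, but it reaches the middle coefficient by a genuinely different route, and it produces the two displayed formulas in the opposite order from the paper. Both arguments rest on the same symmetry that you isolate first: $g^*$ has the same principal polynomial as $g$, and $g^*=n(g)g^{-1}$ forces the constant term to be $n(g)^2$ and the $x$-coefficient to be $n(g)$ times the $x^3$-coefficient. Where you diverge is at $c_2$. The paper notes that
\[
g+n(g)g^{-1}=g+g^{*}=\begin{pmatrix} Tr(a) & b+\overline{c} \\ c+\overline{b} & Tr(d) \end{pmatrix}
\]
has all its entries in the commutative subalgebra generated by $b+\overline{c}$, computes the characteristic polynomial of this $2\times 2$ matrix directly as $y^2-(Tr(a)+Tr(d))y+Tr(a)Tr(d)-N(b+\overline{c})$, and matches it against $x^{-2}\Phi(x)$ under the substitution $y=x+n(g)x^{-1}$; this yields the matrix-coefficient form of $\Phi$ first, and the intrinsic form with $\tfrac12(Tr(g)^2-Tr(g^2))$ is then deduced from it using exactly the identities you invoke ($Tr(g^2)=Tr(a^2)+Tr(d^2)+2Tr(bc)$, $N(b+\overline{c})=N(b)+N(c)+Tr(bc)$, and the diagonal relations $N(a)+N(b)=N(c)+N(d)=n(g)$ from $gg^*=n(g)1_2$). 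You instead obtain the intrinsic form first from Newton's identity $c_2=\tfrac12(p_1^2-p_2)$ applied to the reduced trace, and then convert; so the same quaternionic bookkeeping occurs in both proofs, merely run in opposite directions. The paper's route avoids Newton's identities and any explicit appeal to the splitting $M_2(B)\hookrightarrow M_4(\C)$; your route makes the equivalence of the two displayed formulas transparent, and your determination of the sign in $\mathrm{Nrd}(g)=+n(g)^2$ via positivity of the reduced norm on the definite algebra is more careful than the paper's bare assertion $x^4\Phi(n(g)x^{-1})=\Phi(x)$. One incidental payoff of your computation: it shows the $x^3$-coefficient in the second displayed formula should read $-Tr(g)$ rather than $+Tr(g)$ (consistently with the $x$-coefficient $-Tr(g)n(g)$ and with the first formula), so the printed statement contains a sign typo; this is not a defect of your argument.
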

\begin{proof}
Put $\Phi(x)=x^4+a_1x^3+a_2x^2+a_3x+a_4$. 
Here $g^*$ has the same principal polynomial $\Phi(X)$,  and since 
$g^*=n(g)g^{-1}$, we have $x^4\Phi(n(g)x^{-1})=\Phi(x)$. 
So we have $a_4=n(g)^2$ and $a_3=a_1n(g)$,
Now put $y=x+n(g)x^{-1}$. Then we have 
\[
x^{-2}\Phi(x)=y^2+a_1y+a_2-2n(g).
\]
Since 
\[
g+n(g)g^{-1}=g+g^*=\begin{pmatrix} Tr(a) & b+\overline{c} \\ 
c+\overline{b} & Tr(d) \end{pmatrix},
\]
and the components of this matrix are commutative, the characteristic
polynomial of $g+n(g)g^{-1}$ is given by 
\[
y^2-(Tr(a)+Tr(d))y+Tr(a)Tr(d)-N(b+\overline{c}),
\]
Comparing this with $x^{-2}\Phi(X)$, we prove the first formula.
Noting that 
\begin{align*}
Tr(g)^2 & =N(a)^2+N(d)^2+2Tr(a)Tr(d), \\
Tr(g^2) & = Tr(a^2)+Tr(d^2)+2Tr(bc), \\
N(b+\overline{c}) & = N(b)+N(c)+Tr(bc) \\
N(a)+N(b) & = N(b)+N(c)=n(g),
\end{align*}
we have the second formula.
\end{proof}

\begin{proof}[Proof of Lemma \ref{table23}]
When $p=2$, a maximal order $O$ is taken to be 
\[
O=\Z+\Z i+\Z j+ \Z\frac{1+i+j+k}{2},
\]
where $i^2=j^2=-1$, $ij=-ji=k$. 
We have 
\[
O^{\times}=\{\pm 1, \pm i, \pm j, \pm k, (\pm 1\pm i\pm j\pm k)/2\}.
\]
The quaternion hermitian matrix corresponding 
to a lattice in the non-principal genus $\cL_{npg}$ is given by   
\[
gg^*=\begin{pmatrix} 2 & r \\ -r & 2 \end{pmatrix} \text{ where }r=i-k, \quad 
g=\begin{pmatrix} 1 & -1 \\ 0 & r \end{pmatrix}.
\]
The unit group $\Gamma_1$ is a set of matrices $g^{-1}\epsilon g$ such that 
$\epsilon \in M_2(O)$ and $\epsilon gg^*\epsilon^*=gg^*$. 
This is given by the following set of elements (1) to (5) (see \cite{ibueuler} p.592. Here we slightly modify the notation, but essentially the same).
\[
\begin{array}{ll}
(1) \quad \begin{pmatrix} r^{-1}a & -r^{-1}aa_0 \\
r^{-1}a & r^{-1}aa_0 \end{pmatrix}, 
& 
(2) \quad \begin{pmatrix} r^{-1}a & r^{-1}aa_0 \\
-r^{-1}a & r^{-1}aa_0 \end{pmatrix},
\\
(3) \quad \begin{pmatrix} a & 0  \\ 0 & aa_0 \end{pmatrix},
& 
(4)\quad \begin{pmatrix} 0 & aa_0 \\
a & 0 \end{pmatrix}, 
\\
(5) \quad 
\begin{pmatrix} (1+r^{-1}x)a & r^{-1}xaa_0 \\
r^{-1}xa & (1+r^{-1}x)aa_0 \end{pmatrix},
& 
\end{array}
\]
where $a \in O^{\times}$,
$a_0\in \{\pm 1,\pm i, \pm j, \pm k\}$ and 
$x\in \{-i,k,(\pm 1-i\pm j+k)/2\}$. 
The number of elements are 192,192,192,192 and 1152 
for each (1), (2), (3), (4) and (5), respectively.
We have $|\Gamma_1|=1920$. 

For $\pi=r=i-k$, by calculating the principal polynomials of elements in 
$\pi \Gamma_1$, we have the result in Lemma \ref{table23}.
Actual calculation is quite long, 
so we sketch the point.
Using Lemma \ref{charpoly}, we can count the elements having each principal polynomial.
For example, if $\gamma \in \Gamma_1$ is as in (4) above, 
then for $g=\pi \gamma=r\gamma$, we have 
\[
g=\begin{pmatrix} 0 & raa_0 \\ ra & 0 \end{pmatrix}.
\]
So we have $Tr(g)=0$ and $Tr(g^2)=Tr(raa_0ra)+Tr(raraa_0)=2Tr((ra)^2a_0)$. 
Here $N(ra)=2$ and the norm $2$ elements in $O$ are the following 24 elements.
\[
\pm(1\pm i),\pm(1\pm j),\pm(1\pm k), \pm(i\pm j),\pm(i\pm k),\pm (j\pm k)
\]
Here we have $(ra)^2=\pm 2i, \pm 2j, \pm 2k$ for the first 12 elements,
(each for two $a$), 
and $-2$ for the last 12 elements.
First we assume $(ra)^2=-2$. Then
 $Tr((ra)^2a_0)=0$ for $a_0=\pm i$, $\pm j$, $\pm k$.
In this case we have $\Phi(X)=x^4+4$. This is for $12\times 6=72$ elements.
For $a_0=\pm 1$ we have $Tr((ra)^2a_0)=\mp 2$.
This case, we have $\Phi(X)=(x^2\mp 2)^2$ for 12 elements for each.
Next assume that $(ra)^2=\pm 2i$. Then we have $Tr(g^2)\neq 0$ only when 
$a_0=\pm i$. For $a_0=\pm i$, we have $(ra)^2a_0=\pm 2$.
So we have $\Phi(x)=x^4+4$ for $ 4 \times 6=24$ elements and
$\Phi(x)=x^4+4x^2+4$ for $4$ elements,
$\Phi(x)=x^4-4x^2+4$ for $4$ elements.
This is the same for $(ra)^2=\pm 2j$ and $\pm 2k$. 
So as a total, the principal polynomials of $\pi \gamma$ for $\gamma \in \Gamma_1$
in (4) is given by the following table.
\[
\begin{array}{lr}
x^4+4 & 144 \\
x^4+4x^2+4=(x^2+2)^2 & 24 \\
x^4-4x^2+4=(x^2-2)^2 & 24 
\end{array}
\]
The case (4) is the simplest case, but continuing the same sort of 
(more complicated) calculation for (1), (2), (3), (5),
we obtain the number of elements for each principal polynomial as in the 
following table.
\[
\begin{array}{lrrrrrrc}
\text{principal poly.} & (3) & (4) & (1) & (2) & (5) & total & total/1920 \\
x^4+4 & 24 & 144 & 54 & 54 & 324 & 600 & 5/16\\
x^4+4x^3+8x^2+8x+4 & 12 & 0 & 1 & 1 & 6 & 20 & 1/96 \\
x^4-4x^3+8x^2-4x+4 & 12 & 0 & 1 & 1 & 6 & 20 & 1/96 \\
x^4-2x^3+4x^2-4x+4 & 48 & 0 & 24 & 24 & 144 & 240 & 1/8 \\
x^4+2x^2+4x^2+4x+4 & 48 & 0 & 24 & 24 & 144 & 240 & 1/8 \\
x^4+4x^2+4 & 48 & 24 & 6 & 6 & 36 & 120 & 1/16 \\
x^4-4x^2+4 & 0 & 24 & 2 & 2 & 12 & 40 & 1/48 \\
x^4-2x^3+2x^2+4x+4 & 0 & 0 & 20 & 20 & 120 & 160 & 1/12 \\
x^4+2x^3+2x^2+4x+4 & 0 & 0 & 20 & 20 & 120 & 160 & 1/12 \\
x^4+2x^2+4 & 0 & 0 & 40 & 40 & 240 & 320 & 1/6 \\
total & 192 & 192 & 192 & 192 & 1152 & 1920 & 1 
\end{array}
\]
From this table, we obtain Table \ref{p2}.
We omit the details.  

When $p=3$, we can take
\[
O=\Z+\Z\frac{1+\alpha}{2}+\Z\beta+\Z\frac{(1+\alpha)\beta}{2}, 
\qquad \alpha^2=-3, \beta^2=-1,\alpha\beta=-\beta\alpha.
\]
We have 
\[
O^{\times}=\{\pm 1,\pm \beta,(\pm1\pm \alpha)/2,(\pm 1\pm \alpha)\beta/2\}
\]
The quaternion hermitian matrix corresponding to a lattice in $\cL_{npg}$
is given by 
\[
gg^*=\begin{pmatrix} 3 & -(1+\beta)\alpha \\ (1+\beta)\alpha & 3 \end{pmatrix},
\quad g=\begin{pmatrix} 1 & 1+\beta \\ 0 & \alpha \end{pmatrix}.
\]
Then $U_{npg}(p)\pi \cap G$ is given by the following elements.
\[
\begin{array}{ll}
(1) \quad \begin{pmatrix} \beta\alpha a & 0 \\ 0 & \alpha aa_0 \end{pmatrix} 
& a \in O^{\times}, a_0\in \{1,(-1\pm \alpha)/2\}, \\[2ex]
(2) \quad \begin{pmatrix} 0 & \beta\alpha aa_0 \\ \alpha a & 0 \end{pmatrix}
& a \in O^{\times}, a_0\in \{1,(-1\pm \alpha)/2\}, \\[2ex]
(3) \quad \begin{pmatrix} \epsilon_1 & -\epsilon_1\overline{c_2}\epsilon_2 
\\ c_2 & \epsilon_2 \end{pmatrix}
& \begin{array}{l}c_2\in O, N(c_2)=2, \epsilon_1, \epsilon_2 \in O^{\times}, 
\\ \epsilon_1\equiv -(1-\beta)c_2\bmod \alpha, 
\epsilon_2\equiv c_2(1+\beta) \bmod \alpha, 
\end{array}
\\[2ex]
(4) \quad \begin{pmatrix} c_2 & \epsilon_2 \\ \epsilon_1 & -\epsilon_2\overline{c_2}\epsilon_2
\end{pmatrix}
& \begin{array}{l}
c_2\in O, N(c_2)=2, \epsilon_1,\epsilon_2 \in O^{\times}, \\
\epsilon_1\equiv (1+\beta)c_2\bmod \alpha, \epsilon_2 \equiv c_2(1+\beta) 
\bmod \alpha. 
\end{array}\\
\end{array}
\]
The number of elements in (1), (2), (3), (4) are 36, 36, 324, 324, respectively.
By similar method as in the case $p=2$, we have the following number of 
principal polynomials in each case. 
\[
\begin{array}{lrrrrrc}
principal\ poly.      & (1) & (2) &  (3)  & (4) & total & total/720 \\
(x^2+3)^2             & 12  &  0   & 18 & 0 & 30  & 1/24\\ 
(x^2+3)(x^2+3x+3)     & 12   &  0  & 36 & 72 & 120 & 1/6 \\
(x^2+3)(x^2-3x+3)     & 12   &  0  & 36 & 72 & 120 & 1/6 \\
(x^2-3)^2             & 0   &  12  & 18 & 0  & 30 & 1/24 \\
x^4+9                 & 0   &  0  & 144 & 36 & 180 & 1/4 \\
x^4+3x^2+9            & 0   &  24  &   72  & 144 & 240 & 1/3 \\ 
total                 & 36  & 36  & 324 & 324 & 720 & 1
\end{array}
\]

\end{proof} 

For $p=2$, $3$, we can compare dimensions of scalar valued paramodular 
cusp forms of weight
$k$ and  
algebraic modular forms of weight $\rho_{k-3,k-3}$ ($k\geq 3$) with involutions 
directly by \cite{ibuonodera}, \cite{dern}, \cite{pooryuenibu} 
without using \cite{dummigan}.
We note that $S_{j+2}(\Gamma_0(p))=0$ for $j=0$ and $p=2$, $3$, 
so the Yoshida lift part does not appear in the comparison and the relation becomes 
much more simple. 
For algebraic modular forms for $p=2$, $3$, we have the following result from the calculation of 
$Tr(R(\pi))$ given as above and the result in \cite{hashimotoibu} (see section \ref{appendix}).
\begin{align*}
\sum_{f=0}^{\infty}\dim \fM_{f,f}(U_{npg}(2))t^f
& = \dfrac{(1+t^5)(1+t^{20})}{(1-t^4)(1-t^6)(1-t^8)(1-t^{10})}
\\
\sum_{f=0}^{\infty}\dim \fM_{f,f}^{+}(U_{npg}(2))t^f
& = \dfrac{1+t^{25}}{(1-t^4)(1-t^6)(1-t^8)(1-t^{10})}
\\
\sum_{f=0}^{\infty}\dim \fM_{f,f}^{-}(U_{npg}(2))t^f
& = \dfrac{t^5(1+t^{15})}{(1-t^4)(1-t^6)(1-t^8)(1-t^{10})}\\
\sum_{f=0}^{\infty}\dim \fM_{f,f}(U_{npg}(3))t^f
& = 
\frac{(1+t^5)(1+t^{15})}
{(1-t^3)(1-t^4)(1-t^6)(1-t^{10})}
\\
\sum_{f=0}^{\infty}\dim \fM_{f,f}^{+}(U_{npr}(3))t^f
& =\frac{(1+t^8)(1+t^{15})}
{(1-t^4)(1-t^6)^2(1-t^{10})}
\\
\sum_{f=0}^{\infty}\dim \fM_{f,f}^{-}(U_{npr}(3))t^f
& = \frac{(t^3+t^5)(1+t^{15})}{(1-t^4)(1-t^6)^2(1-t^{10})}
\end{align*}
On the other hand, for paramodular forms of level $2$ and $3$, the dimensions for 
plus and minus space (including non-cusp forms) has been given in 
\cite{ibuonodera}, \cite{dern}, \cite{pooryuenibu}. 
The structure of cusps of paramodular varieties is well known
(see for example \cite{ibucusp} Proposition 5.1 to 5.3). 
For level $p$, there are two one-dimensional cusps isomorphic to the compactification of $H_1/SL_2(\Z)$ that are crossing at one point. 
For a fixed $k$, the modular forms of one variable on this boundary consist of 
a pair of elliptic cusp forms of $SL_2(\Z)$, 
which gives one to plus part and other to minus part of $A_k(K(p))$, and 
the Eisenstein series besides, which belong to the plus part. 
This fact and the surjectivity of generalized Siegel $\Phi$-operator 
by Satake \cite{satake} show that $S_k^{\pm}(K(p))$ is given by 
\begin{align*}
\dim S_{k}^+(K(p))  & = \dim A_k^+(K(p))-\dim A_k(SL_2(\Z)), \\
\dim S_k^+(K(p)) & =\dim A_k^-(K(p))-\dim S_k(SL_2(\Z)).  
\end{align*}

Here we give concrete results only for cusp forms.
For the dimensions of the whole space including non cusp forms,
 see \cite{pooryuenibu} p. 113. 

\begin{align*}
\sum_{k=0}\dim S_{k}^{+}(K(2)) t^k & = 
\dfrac{t^8 + t^{10} + t^{12} - t^{20} + t^{23} + t^{33}}
{(1-t^4)(1-t^6)(1-t^8)(1-t^{12})}
\\
\sum_{k=0}\dim S_k^{-}(K(2)) t^k & = 
\dfrac{t^{11} + t^{20} + t^{21} + t^{22} + t^{24} - t^{32}}
{(1-t^4)(1-t^6)(1-t^8)(1-t^{12})}
\\
\sum_{k=0}^{\infty}\dim S_k^+(K(3))t^k & = 
\frac{t^6+t^8+t^{10}+t^{12}-t^{18}+t^{21}+t^{23}+t^{31}}
{(1-t^4)(1-t^6)^2(1-t^{12})}
\\
\sum_{k=0}^{\infty}\dim S_k^-(K(3))t^k & =
\frac{t^9+t^{11}+t^{18}+t^{19}+t^{20}+t^{22}+t^{24}-t^{30}}
{(1-t^4)(1-t^6)^2(1-t^{12})}
\end{align*}

Observing these, we have the following dimensional relations without 
assuming the results of 
\cite{dummigan}.  
(Actually these relations had been obtained before independently of \cite{dummigan}.)

\begin{prop}
For $p=2$ and $p=3$, and any integer $k\geq 3$, we have 
\begin{align*}
& \dim S_{k}^{-}(K(p))-\dim S_{k}(Sp(2,\Z))+\delta_{k,even}\cdot\dim S_{2k-2}(SL_2(\Z))
\\ & \qquad =\dim \fM_{k-3,k-3}^{+}(U_{npr}(p))-\delta_{k,3}
-\delta_{k,odd}\cdot \dim S_{2k-2}(SL_2(\Z)).\\
& \dim S_{k}^{+}(K(p))-\dim S_{k}(Sp(2,\Z)) 
 = \dim \fM_{k-3,k-3}^{-}(U_{npr}(p)),
\end{align*}
where $(\delta_{k,even},\delta_{k,odd})=(1,0)$ if $k$ is even and 
$=(0,1)$ if $k$ is odd. 
Here LHS are the dimensions of new forms in the sense of \cite{robertschmidt}.
The term $\dim S_{2k-2}(SL_2(\Z))$ is an adjustment for 
Saito--Kurokawa lift or 
Ihara lift (which is the compact version of Saito--Kurokawa lift 
though it had been introduced much earlier). 
\end{prop}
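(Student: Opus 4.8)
The plan is to prove the proposition purely by comparison of the explicit generating functions displayed above, so that the argument is self-contained and, as asserted, independent of \cite{dummigan}. Every dimension occurring in the two relations is, as a function of the weight $k$, the coefficient sequence of a known rational generating function in a variable $t$: the series $\sum_f\dim\fM^{\pm}_{f,f}(U_{npg}(p))t^f$ recorded above (which follow from the computation of $Tr(R(\pi))$ in this section together with \cite{hashimotoibu}), the paramodular series $\sum_k\dim S^{\pm}_k(K(p))t^k$ taken from \cite{ibuonodera}, \cite{dern}, \cite{pooryuenibu}, Igusa's series for $\sum_k\dim S_k(Sp(2,\Z))t^k$ from \cite{igusa}, and the classical elliptic series. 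Since each proposed identity is a $\Z$-linear relation between such sequences, it is equivalent to the corresponding identity of generating functions, which after passing to a common denominator becomes a polynomial identity of bounded degree and is verified by a finite computation. I would carry this out for the four cases $p\in\{2,3\}$ and sign $\pm$ in turn.

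Two reductions make the verification light. First, since $\delta_{k,even}+\delta_{k,odd}=1$ for every $k$, the two lift-adjustment terms in the first relation combine, and that relation is equivalent to
\[
\dim\fM^{+}_{k-3,k-3}(U_{npg}(p))=\dim S^{-}_k(K(p))-\dim S_k(Sp(2,\Z))+\dim S_{2k-2}(SL_2(\Z))+\delta_{k,3};
\]
in particular no splitting of $\dim S_{2k-2}(SL_2(\Z))$ into even and odd parts is needed. Second, for $p=2,3$ one has $S_2(\Gamma_0(p))=0$, so the Yoshida contribution $S^{new,\pm}_{j+2}(\Gamma_0(p))\times S_{2k-2}(SL_2(\Z))$ that appears in Theorem \ref{paramodular} is absent here; this is precisely why no Yoshida term occurs in the statement, and it is the one place where the special nature of $p=2,3$ enters.

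For the ingredients I would record the shifted series $\sum_k\dim\fM^{\pm}_{k-3,k-3}(U_{npg}(p))t^k=t^{3}\sum_f\dim\fM^{\pm}_{f,f}(U_{npg}(p))t^f$, read off $\sum_k\dim S^{\pm}_k(K(p))t^k$ from the displayed formulas (imported through the relation between $A^{\pm}_k(K(p))$ and $S^{\pm}_k(K(p))$ coming from the two one-dimensional cusps meeting at a point and the surjectivity of the Siegel $\Phi$-operator of Satake \cite{satake}, as recalled above), and compute
\[
\sum_k\dim S_{2k-2}(SL_2(\Z))\,t^k=\frac{t^{7}}{(1-t^{2})(1-t^{3})}
\]
by substituting $u^2=t$ in $\sum_m\dim S_m(SL_2(\Z))u^m=u^{12}/((1-u^{4})(1-u^{6}))$ and multiplying by $t$. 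The Kronecker symbol $\delta_{k,3}$ contributes the monomial $t^{3}$.

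With these in hand, for each $p$ and each sign I would clear denominators against a fixed polynomial divisible by each of $(1-t^{4})(1-t^{6})(1-t^{8})(1-t^{12})$, $(1-t^{4})(1-t^{6})(1-t^{8})(1-t^{10})$, $(1-t^{2})(1-t^{3})$ and Igusa's denominator, and check equality of the resulting numerator polynomials. It remains to reconcile the range of validity, since the relations are asserted for $k\ge 3$ while generating functions encode all $k\ge 0$: every series involved vanishes for $0\le k<3$, and the borderline $k=3$ matches by hand, e.g. $\dim\fM^{+}_{0,0}(U_{npg}(p))=1=\delta_{3,3}$ against $\dim S^{-}_3(K(p))=\dim S_3(Sp(2,\Z))=\dim S_4(SL_2(\Z))=0$, so the power-series identities in fact hold for all $k\ge 0$ and no boundary correction is needed. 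The main obstacle is not conceptual but one of bookkeeping: Igusa's cusp-form series and the elliptic series must be used with the correct offsets and signs, and the paramodular series must be imported consistently through the $A^{\pm}\to S^{\pm}$ reduction, since any misalignment of conventions would corrupt the numerator comparison.
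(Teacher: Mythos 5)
Your proposal is correct and is essentially the paper's own argument: the Proposition is justified there simply by "observing" the explicit generating functions for $\fM^{\pm}_{f,f}(U_{npg}(p))$ (from $Tr(R(\pi))$ and \cite{hashimotoibu}) and for $S^{\pm}_k(K(p))$ (imported from \cite{ibuonodera}, \cite{dern}, \cite{pooryuenibu} via the cusp structure and Satake's surjectivity), with the Yoshida term absent because $S_2(\Gamma_0(p))=0$ for $p=2,3$. Your write-up merely makes the finite rational-function verification and the low-weight boundary cases explicit, which is the same route in more detail.
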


\section{Appendix on dimensions and characters}\label{appendix}
We review formulas for $\dim \fM_{f_1,f_2}(U_{npg}(p))$ from \cite{hashimotoibu} II
p. 696 Theorem for readers convenience. . 
We also give explicit formulas of characters of irreducible representations of $Sp(2)$ 
for elements of $Sp(2)$ that we need in the theorems.
\begin{thm}\label{nonprincipaldim}
We assume that $p$ is any prime including $2$, $3$, $5$. Then we have 
\begin{align*}
& \dim \fM_{k+j-3,k-3}(U_{npg}(p)) \\
& = \frac{p^2-1}{2880}\chi_1 +\frac{\delta_{p2}}{192}\chi_2 +\frac{\delta_{p2}}{16}\chi_3
+\frac{\delta_{p3}}{9}\chi_4
\\ & \quad
+\left(\frac{1}{2^3\cdot 3}\left(p-\left(\frac{-1}{p}\right) \right)+
\frac{1}{2^5\cdot 3}\left(p\left(\frac{-1}{p}\right)-1\right)\right)\chi_6 
\\  &  \quad +
\left(\frac{1}{2^3\cdot 3}\left(p-\left(\frac{-3}{p}\right)\right)+
\frac{1}{2^3\cdot 3^2}\left(p\left(\frac{-3}{p}\right)-1\right)\right)\chi_7
\\ & \quad +\frac{\delta_{p2}}{6}\chi_9 
+\frac{\chi_{10}}{5}\left(1-\left(\frac{p}{5}\right)\right)
+\frac{\chi_{11}}{8}\left(1-\left(\frac{2}{p}\right)\right)
\\
 & 
\quad +\frac{\chi_{12}}{24}
\left(1-\left(\frac{3}{p}\right)+\left(\frac{-1}{p}\right)-\left(\frac{-3}{p}\right)\right)
\end{align*}
\end{thm}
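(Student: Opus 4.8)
The plan is to evaluate $\dim\fM_{f_1,f_2}(U_{npg}(p))$ as the trace of the \emph{identity} Hecke operator and to organize that trace by principal polynomials, exactly along the lines of Corollary \ref{maincor}. First I would pass to the non-adelic model $\fM_{f_1,f_2}(U_{npg}(p))\cong\oplus_{i=1}^{H}V^{\Gamma_i}$ recalled in Section \ref{proof1}, and apply the elementary averaging identity $\dim V^{\Gamma_i}=\#(\Gamma_i)^{-1}\sum_{\gamma\in\Gamma_i}Tr(\rho_{f_1,f_2}(\gamma))$, giving
\[
\dim\fM_{f_1,f_2}(U_{npg}(p))=\sum_{i=1}^{H}\frac{1}{\#(\Gamma_i)}\sum_{\gamma\in\Gamma_i}Tr(\rho_{f_1,f_2}(\gamma)).
\]
Grouping the inner sum according to the principal polynomial $\phi$ of $\hat\gamma$ and invoking Corollary \ref{maincor} with $UgU=U_{npg}(p)$, this becomes $\sum_{\phi}\chi_{f_1,f_2}(\phi)\,m(\phi)$ with the representation-independent mass $m(\phi)=\sum_{i=1}^{H}\#(\Gamma_i\cap G(\phi))/\#(\Gamma_i)$. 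Thus the whole theorem reduces to computing the finitely many masses $m(\phi)$, one for each admissible principal polynomial; the coefficient of each $\chi_i$ in the stated formula is precisely the corresponding $m(\phi_i)$. A useful consistency check is the case $(f_1,f_2)=(0,0)$, where every $\chi_i=1$ and the sum of the coefficients must recover the class number $H$ of $\cL_{npg}$.

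The leading term is $m(\phi_1)$ for $\phi_1(x)=(x\pm1)^4$, which collects only the central elements $\pm1_2\in\Gamma_i$; hence $m(\phi_1)=\sum_i 2/\#(\Gamma_i)$ is twice the total mass of the non-principal genus. I would obtain this total mass from the Eichler--Shimura mass formula for positive definite quaternion hermitian forms of degree two (as in \cite{shimura}), which for prime discriminant $p$ yields the coefficient $\tfrac{p^2-1}{2880}$ of $\chi_1$; this is the sole source of the leading $p$-dependence.

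For each remaining $\phi_i$ the roots are roots of unity, so any $\gamma$ with that principal polynomial has finite order and generates an order in a product of CM fields. I would compute $m(\phi_i)$ as a weighted number of optimal embeddings of such an order into the right orders $R_i$ of the $L_i$, and expand it by the local--global principle for embedding numbers: the archimedean factor is absorbed into $\chi_i$, the factor at the ramified prime $p$ (where $B$ is division and the non-principal genus carries norm $\pi O$ with $\pi^2=-p$) produces the quadratic-residue symbols $\left(\tfrac{-1}{p}\right)$, $\left(\tfrac{-3}{p}\right)$, $\left(\tfrac{2}{p}\right)$, $\left(\tfrac{3}{p}\right)$, and the remaining global count is packaged into the class numbers $h(\sqrt{-d})$ of the relevant fields $\Q(\sqrt{-1})$, $\Q(\sqrt{-3})$, and so on. The sporadic contributions $\delta_{p2}\chi_2$, $\delta_{p2}\chi_3$, $\delta_{p3}\chi_4$, $\delta_{p2}\chi_9$, and the $\chi_{10}$ term carrying $\left(\tfrac{p}{5}\right)$ arise from principal polynomials with roots of order $8$, $12$, or $5$, which admit embeddings only for the small primes $p=2$, $3$, $5$.

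The hard part will be the local embedding computation at the ramified prime $p$: for each $\phi_i$ one must determine the structure of $R_{i,p}$ in the non-split setting and count optimal embeddings there, pinning down exactly the residue symbols and the powers of $2$ and $3$ in the denominators. A secondary difficulty is the degenerate behaviour at $p=2$, $3$, $5$, where extra finite-order classes appear and the generic embedding-number formulas break down, so that the $\delta_{p2}$, $\delta_{p3}$ and $\chi_{10}$ terms must be verified by a separate local analysis. Once all the local masses are assembled and multiplied by their characters, the stated formula follows; this is exactly the computation carried out in \cite{hashimotoibu} II, which I would follow in detail.
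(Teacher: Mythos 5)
Your outline is correct and matches the approach behind the stated formula: the paper itself gives no independent proof of Theorem \ref{nonprincipaldim} but quotes it from \cite{hashimotoibu} II, and the method there is exactly what you describe --- averaging the character over the automorphism groups $\Gamma_i$, isolating the mass-formula term for the central elements $\pm 1_2$, and evaluating the remaining representation-independent masses $m(\phi_i)$ by local--global counts of optimal embeddings of cyclotomic orders, with the degenerate primes $p=2,3,5$ handled separately. Be aware, though, that the local embedding computations at the ramified prime constitute the entire substance of \cite{hashimotoibu} II, so your proposal is an accurate roadmap to that reference rather than a self-contained derivation of the explicit coefficients.
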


Here we understand $(n/p)=0$ if $p$ ramifies in $\Q(\sqrt{n})$, $=-1$ if $p$ remains prime, and $=1$ otherwise. So for example 
we have 
\[
\left(\frac{-1}{2}\right)=\left(\frac{3}{2}\right)=\left(\frac{-3}{3}\right)=\left(\frac{5}{5}\right)=\left(\frac{2}{2}\right)
=0, \qquad \left(\frac{-3}{2}\right)=-1.
\]
Here $\chi_i$ are characters of the elements of $Sp(2)$ whose principal polynomials 
are $\phi_i(\pm x)$, where $\phi_i(x)$ are given as follows.
\[
\begin{array}{ll}
\phi_1(x) = (x-1)^4 & \phi_2(x)=(x-1)^2(x+1)^2 \\
\phi_3(x)= (x-1)^2(x^2+1) & \phi_4(x) = (x-1)^2(x^2+x+1) \\
\phi_5(x)=(x-1)^2(x^2-x+1) & \phi_6(x)=(x^2+1)^2 \\
\phi_7(x) = (x^2+x+1)^2 & \phi_8(x)= (x^2+1)(x^2+x+1) \\
\phi_9(x)=(x^2+x+1)(x^2-x+1) & \phi_{10}=x^4+x^3+x^2+x+1 \\
\phi_{11}(x)  = x^4+1 & \phi_{12}(x)=x^4-x^2+1 \\
\phi_{13}(x) =x^4+\sqrt{5}x^3+3x^2+\sqrt{5}x+1 & 
\phi_{14}(x) = (x^2+\sqrt{2}x+1)^2 \\
\phi_{15}(x)= x^4+\sqrt{2}x^3+x^2+\sqrt{2}x+1 &
\phi_{16}(x)=(x^2+\sqrt{2}x+1)(x^2+1) \\
\phi_{17}(x) = (x^2+\sqrt{3}x+1)(x^2+1) 
\end{array}
\]
These polynomials are possible principal polynomials of all the elements of 
finite order of $G$ for $n=2$ and of 
$g/\sqrt{p}$ with $n(g)=p$ appearing in $U_{npg}\pi$.
Of course the character formula is classical and written in \cite{weyl}, 
but explicit calculation is sometimes complicated and we give them below for 
readers' convenience. We write the Young diagram parameter $(f_1,f_2)$ for 
an irreducible representation of $Sp(2)$. 
By the general formula of \cite{weyl} p. 219 Theorem 7.8E, 
for a principal polynomial $\phi(x)$ of 
any element in $Sp(2)$, 
the character of $\rho_{f_1,f_2}$ is given by 
\begin{equation}\label{characterformula}
p_{f_1}(p_{f_2}+p_{f_2-2})-p_{f_2-1}(p_{f_1+1}+p_{f_1-1})
\end{equation}
where $p_f$ is defined by 
\[
\frac{1}{\phi(x)}=\sum_{f=0}^{\infty}p_f x^f
\]
with $p_f=0$ for any $f<0$. 

Since it would be convenient to use the notation suitable for the calculation of 
paramodular forms $S_{k,j}(K(p))$ of weight $\det^kSym(j)$, 
we put $(f_1,f_2)=(j+k-3,k-3)$ ($k\geq 3$, $j\geq 0$, $j$ even) and 
write the characters below for these parameters $k$, $j$.
In the following table, $\chi_i$ are up to constant the same as $C_i(k,j)$ in \cite{ibukitayama} p. 604,
and also $\chi_i$ for $i=2,6,9,11$ are reproductions of those in the first section, noting that $f_1=k+j-3$, $f_2=k-3$. 
The concrete results for $\chi_{13}$ to $\chi_{17}$ are newly calculated by 
using the above formula \eqref{characterformula}, writing $p_f$ explicitly 
for each $\phi$.  The details of the calculation are omitted here but 
we will add hints for the calculation later.
\begin{align*}
\chi_1 & = \frac{(j+1)(k-2)(j+k-1)(j+2k-3)}{6}, \\
\chi_2 & = (-1)^{k-3}\frac{(k-2)(k+j-1)}{2}, \\
\chi_3 & = \frac{1}{2}[(-1)^{j/2}(k-2),-(j+k-1),-(-1)^{j/2}(k-2),j+k-1;4]_k, \\
\chi_4 & = \frac{1}{3}\bigl((j+k-1)[1,-1,0;3]_k+(k-2)[1,0,-1;3]_{j+k}\bigr), \\
\chi_5 & = (j+k-1)[-1,-1,0,1,1,0;6]_k+(k-2)[1,0,-1,-1,0,1;6]_{j+k}, \\
\chi_6 & = \frac{1}{2}(-1)^{(2k+j-6)/2}\times [-k+2, j+k-1;2]_k, \\
\chi_7 & = \left\{\begin{array}{ll}
[(2k+j-3),(2k+2j-2), (2k-4);3]_k/3 & \text{ if } j\equiv 0 \bmod 3, \\
{}[-(2k+2j-2),-(2k+j-3),-(2k-4);3]_k/3 & \text{ if } j\equiv 1 \bmod 3, \\
{}[j+1,-(j+1),0;3]_k/3 & \text{ if } j\equiv 2\bmod 3,
\end{array}\right.
\\ 
\chi_{8} & = 
\left\{
\begin{array}{ll}
[-1,0,0,1,1,1,1,0,0,-1,-1,-1;12]_k & \text{ if } j\equiv 0 \bmod 12, \\{}
[1,-1,0,-1,-1,0,-1,1,0,1,1,0; 12]_k &  \text{ if } j\equiv 2 \bmod 12, \\{}
[-1,1,0,0,1,-1,1,-1,0,1,-1,1;12]_k & \text{ if } j\equiv 4 \bmod 12, \\{}
[1,0,0,1,-1,1,-1,0,0,-1,1,-1; 12]_k & \text{ if} j\equiv 6 \bmod 12, \\{}
[-1,-1,0,-1,1,0,1,1,0,1,-1,0; 12]_k & \text{ if } j\equiv 8 \bmod 12, \\{}
[1,1,0,0,-1,-1,-1,-1, 0,0,1,1; 12]_k & \text{ if } j\equiv 10 \bmod 12, 
\end{array}\right.
\\ 
\chi_9 &  = 
\left\{\begin{array}{ll}
[-1,0,0,1,0,0;6]_k & \text{ if } j\equiv 0 \bmod 6, \\{}
[1,-1,0,-1,1,0;6]_k & \text{ if } j\equiv 2 \bmod 6, \\{}
[0,1,0,0,-1,0;6]_k & \text{ if } j\equiv 4 \bmod 6, 
\end{array}\right.
\\
\chi_{10}
& =\left\{\begin{array}{ll}
[-1,0,0,1,0;5]_k & \text{ if } j\equiv 0 \bmod 10, \\{}
[1,-1,0,0,0; 5]_k & \text{ if } j\equiv 2 \bmod 10, \\{}
0 & \text{ if } j\equiv 4 \bmod 10, \\{}
[0,0,0,-1,1; 5]_k & \text{ if } j\equiv 6 \bmod 10, \\{}
[0,1,0,0,-1;5]_k & \text{ if }j\equiv 8 \bmod 10, 
\end{array}\right.
\\
\chi_{11} & = \left\{\begin{array}{ll}
[-1,0,0,1;4]_k & \text{ if } j\equiv 0 \bmod 8, \\{}
[1,-1,0,0;4]_k & \text{ if } j\equiv 2 \bmod 8, \\{}
[1,0,0,-1;4]_k & \text{ if } j\equiv 4 \bmod 8, \\{}
[-1,1,0,0;4]_k & \text{ if } j\equiv 6 \bmod 8, 
\end{array}\right.
\\
\chi_{12} & = (-1)^{j/2}\times \left\{
\begin{array}{ll}
[-1,0,0,1,-2,2;6]_k & \text{ if }j\equiv 0 \bmod 6, \\{} 
[-1,1,0;3]_k & \text{ if }j\equiv 2 \bmod 6, \\{}
[2,-1,0,0,1,-2;6]_k & \text{ if }j\equiv 4 \bmod 6,
\end{array}\right.
\\
\chi_{13} & = 
\left\{\begin{array}{ll}
[-1,0,0,1,2,1,0,0,-1,-2;10]_{k} & \text{ if } j\equiv 0 \bmod 10, \\{}
[1,-1,0,2,0,-1,1,0,-2,0;10]_{k} & \text{ if } j\equiv 2 \bmod 10, \\{}
[-2,-2,0,-2,-2,2,2,0,2,2;10]_{k} & \text{ if }j\equiv 4 \bmod 10, \\{}
[0,2,0,-1,1,0,-2,0,1,-1;10]_{k} & \text{ if }j\equiv 6 \bmod 10, \\{}
[2,1,0,0,-1,-2,-1,0,0,1;10]_{k} & \text{ if }j\equiv 8 \bmod 10,
\end{array}\right.
\\
\chi_{14} & = 
\left\{
\begin{array}{ll}
(-1)^{j/4}[j+k-1,j+k-1,k-2,k-2;4]_k & \text{ if } j\equiv 0 \bmod 4, \\
(-1)^{(j-2)/4}[j+k-1,k-2,k-2,j+k-1;4]_k & \text{ if } j\equiv 2 \bmod 4, 
\end{array}\right.
\\
\chi_{15}& =(-1)^{[j/12]}\times 
\\ & 
\left\{\begin{array}{ll} 
[-1,0,0,1,0,-2,1,2,-2,-1,2,0;12]_{k}& \text{ if }j\equiv 0 \bmod 12,\\{}
[1,-1,0;3]_{k} & j\equiv 2 \bmod 12, \\{}
[0,-1,0,2,-1,-2,2,1,-2,0,1,0;12]_{k} & j\equiv 4 \bmod 12, \\{}
[1,-2,0,1,0,0,-1,0,2,-1,-2,2;12]_{k} & j\equiv 6 \bmod 12, \\{}
[1,-1,0;3]_{k} & j\equiv 8 \bmod 12, \\{}
[0,-1,0,0,1,0,-2,1,2,-2,-1,2;12]_{k} & j\equiv 10 \bmod 12, 
\end{array}\right.
\\
\chi_{16} &  
=\left\{
\begin{array}{ll}
[-1,0,0,1,1,0,0,-1;8]_{k} & j \equiv 0 \bmod 8, \\{}
[1,-1,0,0,-1,1,0,0;8]_{k} & j \equiv 2 \bmod 8, \\{}
[-1,0,0,-1,1,0,0,1;8]_{k} & j \equiv 4 \bmod 8, \\{}
[1,1,0,0,-1,-1,0,0;8]_{k} & j \equiv 6 \bmod 8,
\end{array}\right.
\\
\chi_{17} & = 
\left\{
\begin{array}{ll}
[-1,0,0,1,1,-1;6]_{k} & j \equiv 0 \bmod 12, \\{}
[1,-1,0;3]_{k} & j \equiv 2 \bmod 12, \\{}
[-1,-1,0,0,1,1;6]_{k} & j \equiv 4 \bmod 12, \\{}
[1,0,0,-1,-1,1;6]_{k} & j \equiv 6 \bmod 12, \\{}
[-1,1,0;3]_{k} & j \equiv 8 \bmod 12, \\{}
[1,1,0,0,-1,-1;6]_{k} & j \equiv 10 \bmod 12.
\end{array}\right.
\end{align*}

Calculation for $\chi_{13}$ to $\chi_{17}$ are based on the following observation.
Multiplying suitable polynomials to the numerators and 
denominators of $1/\phi_i(x)$, we have  
\begin{align*}
\frac{1}{\phi_{13}(x)} & =
\frac{1+2x^2-2x^4-x^6+\sqrt{5}(-x+x^5)}{1-x^{10}},
\\
\frac{1}{\phi_{14}(x)}& =\frac{(x^4+4x^2+1)-2\sqrt{2}x(x^2+1)}{(x^4+1)^2},
\\
\frac{1}{\phi_{15}(x)}&=\frac{1++x^2+2x^4+x^6+x^8\sqrt{2}(x+x^3+x^5+x^7)}
{1+x^{12}},
\\
\frac{1}{\phi_{16}(x)}& = =\frac{1-x^4+\sqrt{2}(-x+x^3)}{1-x^8}
\\
\frac{1}{\phi_{17}(x)}& = =\frac{1+x^2-x^6-x^8+\sqrt{3}(-x+x^7)}
{1-x^{12}}.
\end{align*}
Noting that 
\[
\frac{1}{(1+x^4)^2}=\sum_{n=0}^{\infty}(-1)^n(n+1)x^{4n},
\]
the formula for $p_f$ defined by $\phi_{14}(x)=\sum_{f=0}^{\infty}p_fx^f$
is given as follows.
\[
p_f=\left\{\begin{array}{ll}
(-1)^{f/4} & f\equiv 0 \bmod 4, \\
-2^{-1}\sqrt{2}(-1)^{(f-1)/4}(f+3) & f\equiv 1 \bmod 4, \\
(-1)^{(f-2)/4}(f+2) & f\equiv 2 \bmod 4, \\
-2^{-1}\sqrt{2}(-1)^{(f-3)/4}(f+1) & f\equiv 3 \bmod 4.
\end{array}\right.
\]
Then calculating \eqref{characterformula} for each $f\bmod 4$ separately, we can show that 
$\chi_{14}$ is given as above. 
In the other cases, the formula for $p_f$ are simpler 
and depend on $f \bmod 10$, $f\bmod 24$, $f\bmod 8$ and $f\bmod 12$ for 
$\phi_{13}$, $\phi_{15}$, $\phi_{16}$ and $\phi_{17}$, respectively.
The characters $\chi_i$ can be calculated similarly as before, divided by cases.

\section{Bias of the dimensions of plus and minus}\label{bias}
In \cite{martin}, for square free $N$, the dimensions of 
$S_k^{new,+}(\Gamma_0(N))$ and 
$S_k^{new,-}(\Gamma_0(N))$ of elliptic cusp forms 
have been compared and it has been shown that we always have 
\[
(-1)^{k/2}\bigl(\dim S_k^{new,+}(\Gamma_0(N))-\dim S_k^{new,-}(\Gamma_0(N))\bigr)\geq 0.
\]
(See \cite{martin} Corollary 2.3. 
Note that the definition of the sign there is the sign of the functional 
equation, which is the Atkin--Lehner sign times $(-1)^{k/2}$.
Since we used the Atkin--Lehner sign for the definition, 
we put $(-1)^{k/2}$ in the above.)

In this section, we similarly compare $\dim S_k^{+}(K(p))$ and $\dim S_k^{-}(K(p))$, 
 and give the same sort of results. 
By Theorem \ref{paramodular} and \ref{compactdim}, 
it is almost trivial that $\dim S_k^+(K(p))$ and 
$\dim S_k^{-}(K(p))$ grow  proportionally to $p^2k^3$ as $k$ and $p$ go to 
infinity. But if we see the difference $\dim S_k^{+}(K(p))-\dim S_k^{-}(K(p))$,
then by Theorem \ref{paramodular}, this is close to  
\[
\dim \fM^{-}_{k-3,k-3}(U_{npg}(p))-\dim \fM^{+}_{k-3,k-3}(U_{npg}(p)).
\]
This is $-Tr R_{k-3,k-3}(\pi)$ by definition, 
and its formula is given in Theorem \ref{compactdim}.
As we will see later, $B_{2,\chi}$ and class numbers 
are approximately $p^{3/2}$ and $\sqrt{p}\log(p)$ up to constant, respectively,
so we can see that the main term of the above difference is 
a positive constant times $-\chi_2B_{2,\chi}$ for big $p$ or $k$, 
which is approximately 
$(-1)^{k}p^{3/2}(k-2)(k-1)$ up to constant.
So it is almost clear that 
$(-1)^{k}\bigl(\dim S_k^+(K(p))-\dim S_k^{-}(K(p))\bigr)$ 
goes to infinity for $k\rightarrow \infty$ or $p\rightarrow \infty$.
By seeing details of the difference, 
we have more exact result given as follows.
\begin{thm}\label{plusminusdiff}
For any integer $k\geq 3$ and any prime $p$, we have 
\[
(-1)^k\bigl(\dim S_k^{+}(K(p))-\dim S_k^{-}(K(p))\bigr)
\geq 0,
\]
The list of $k\geq 3$ and $p$ such that $\dim S_k^+(K(p))=\dim S_k^{-}(K(p))$ 
is given as follows.
\begin{align*}
(p,k) =
& (2,3), (2,4), (2,5),(2,6),(2,7), (2,9), (2,13)
\\ & (3,3), (3,4), (3,5), (3,7), (5,3), (5,4), (7,3), (11,3).
\end{align*}
These are exactly the cases such that 
$S_k(K(p))=0$.
\end{thm}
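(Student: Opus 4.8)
The plan is to collapse the whole quantity into one explicit, entirely computable expression and then isolate a single dominant positive term. First I would specialize Theorem \ref{paramodular} to $j=0$ and subtract the two formulas, using \eqref{compactplus} and \eqref{compactminus} to rewrite the algebraic-modular contribution as $\dim\fM^-_{k-3,k-3}(U_{npg}(p))-\dim\fM^+_{k-3,k-3}(U_{npg}(p))=-Tr(R_{k-3,k-3}(\pi))$. Writing $D_k=\dim S_{2k-2}(SL_2(\Z))$ and $\Delta_2=\dim S_2^{new,+}(\Gamma_0(p))-\dim S_2^{new,-}(\Gamma_0(p))$, the two cross terms combine and the Siegel part $\dim S_k(Sp(2,\Z))$ cancels, leaving
\[
\dim S_k^+(K(p))-\dim S_k^-(K(p))=-Tr(R_{k-3,k-3}(\pi))+(1-\Delta_2)D_k+\delta_{k3}.
\]
By \eqref{dimgamma0pm} with weight $2$ we have $1-\Delta_2=a_ph(\sqrt{-p})/2$ for $p>3$, so the right-hand side is given through Theorem \ref{compactdim}, \eqref{dimsl} and the relevant class numbers; for $p=2,3$ I would instead substitute \eqref{dimgamma0p2}, \eqref{dimgamma0p3} and Theorem \ref{compactp2p3}, or simply read the dimensions off the Hilbert series of Section \ref{p23}.

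Next I would multiply by $(-1)^k$ and extract the main term. Since $f_1=f_2=k-3$ gives $\chi_2=(-1)^{k-3}(k-1)(k-2)/2$, the sign works out so that $-(-1)^k\chi_2=(k-1)(k-2)/2>0$; together with $9-2\left(\frac{2}{p}\right)\ge 7$ (respectively the coefficient $1$ when $p\equiv3\bmod4$) and $B_{2,\chi}>0$, the $\chi_2$-contribution $-(-1)^k\tfrac{\chi_2}{2^5\cdot3}\left(9-2\left(\frac{2}{p}\right)\right)B_{2,\chi}$ is strictly positive of size $\asymp p^{3/2}k^2$. Every remaining piece is controlled: $\chi_6$ is linear in $k$, while $\chi_9,\chi_{11},\chi_{13}$ are bounded (by $1$, $1$, $2$) because $f_1-f_2=0$, and $D_k=O(k)$, so all other terms are bounded by a constant times $h(\sqrt{-p})\,k$, $h(\sqrt{-2p})$, or $h(\sqrt{-3p})$.

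The key analytic input is then a comparison of sizes. Using the effective lower bound $B_{2,\chi}=24\,\zeta_{\Q(\sqrt p)}(-1)\gg p^{3/2}$ (from Siegel's formula, which also shows positivity) together with Dirichlet's upper bounds $h(\sqrt{-d})\ll\sqrt d\log d$ for $d=p,2p,3p$, the ratio of the error terms to the main term is $O\!\left((\log p)/(pk)\right)$, which forces $(-1)^k\bigl(\dim S_k^+(K(p))-\dim S_k^-(K(p))\bigr)>0$ once $p$ or $k$ exceeds an explicit threshold. For the finitely many pairs $(p,k)$ below that threshold I would evaluate the closed formula above directly, which is fully effective; this verifies the inequality in every case and produces the list of pairs at which equality holds. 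Finally, since $\dim S_k(K(p))=\dim S_k^+(K(p))+\dim S_k^-(K(p))$ grows like $p^2k^3$, for large $p$ or $k$ the space is nonzero and the inequality is strict, while the finite computation shows that at each equality pair one in fact has $\dim S_k^+(K(p))=\dim S_k^-(K(p))=0$; this matches the two descriptions of the exceptional set.

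The hardest step will be making the size comparison genuinely effective: I must pin down explicit constants in the lower bound for $B_{2,\chi}$ and in the upper bounds for the three class numbers so that the resulting threshold is small enough to keep the leftover finite check manageable, and I must run the four regimes ($p\equiv1\bmod4$; $p\equiv3\bmod4$ with $p>3$; and $p=2,3$) separately, carefully tracking how the $h(\sqrt{-p})\chi_6$ term interacts with the $(1-\Delta_2)D_k$ term, since both are of the same order and it is their combined sign that must be dominated by the $\chi_2 B_{2,\chi}$ main term.
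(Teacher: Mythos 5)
Your proposal is correct and follows essentially the same route as the paper's proof: reduce the difference via Theorem \ref{paramodular} to $(-1)^{k-1}\bigl(Tr(R_{k-3,k-3}(\pi))-\tfrac{a_p h(\sqrt{-p})}{2}\dim S_{2k-2}(SL_2(\Z))\bigr)-\delta_{k3}$, isolate the positive quadratic-in-$k$ main term coming from $\chi_2 B_{2,\chi}$ (of size $\asymp p^{3/2}k^2$), dominate the remaining class-number terms using a lower bound for $L(2,\chi)$ (equivalently $\zeta_{\Q(\sqrt p)}(-1)$) and Siegel-type upper bounds for $L(1,\chi_D)$, and finish with a finite explicit check that also identifies the equality cases with the vanishing of $S_k(K(p))$. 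The paper carries out the same computation, splitting $k$ even/odd and treating $p=2,3,5$ separately, exactly as you anticipate in your final paragraph.
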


Now the rest of this section is devoted to the proof of this theorem.

\begin{proof}
We have 
\[
\dim S_2^{new,-}(\Gamma_0(p))-\dim S_2^{new,+}(\Gamma_0(p))=\frac{a_ph(\sqrt{-p})}{2}-1
\]
for $a_p$ defined in \eqref{dimgamma0pm}.
We put 
\[
f(p,k)=(-1)^k\bigl(\dim S_k^+(K(p))-\dim S_k^{-}(K(p))\bigr).
\]
Then by Theorem \ref{paramodular}, we have 
\[
f(p,k)=(-1)^{k-1}\biggl(Tr(R_{k-3,k-3}(\pi))
-\frac{a_p h(\sqrt{-p})}{2}\times \dim S_{2k-2}(SL_2(\Z))\biggr)-\delta_{k3}.
\]

By Theorem \ref{compactdim}, noting that $\chi_2$ is a second order polynomial of $k$ and the other characters $\chi_i$ in Theorem 
\ref{compactdim} are at most linear with respect to $k$, 
we see that $f(p,k)$ is a second order 
polynomial of $k$ depending on $k \bmod 60$ for a fixed $p$.
So for any concrete $p$, 
the evaluation of $f(p,k)$ is just an elementary calculus on 
quadratic polynomials. Indeed for small $p$ like 
$p=2$, $3$, $5$, the result in Theorem 
\ref{plusminusdiff} is easily obtained.
For example, for $p=5$, we have $B_{2,\chi}=4/5$
and $h(\sqrt{-5})=h(\sqrt{-10})=h(\sqrt{-15})=2$, $\dim 
S_2(\Gamma_0(5))=0$, the formula for $f(5,k)$ is explicitly written down as  
 \begin{align*}
  f(5,k) = & \frac{11(k-1)(k-2)}{2^4\cdot 3\cdot 5}
+ \frac{1}{2^4}\left\{\begin{array}{cc} -(k-2) & k:even 
\\ (k-1) & k:odd \end{array}\right.
\\ & +\frac{(-1)^{k-1}}{2^2}[-1,0,,0,1;4]_k
 +\frac{(-1)^{k-1}}{3}[-1,0,0,1,0,0;6]_k
 \\ & +\frac{(-1)^{k-1}}{5}
 [-1,0,0,1,2,1,0,0,-1,-2;10]_k
 \\ & +(-1)^{k}\dim S_{2k-2}(SL_2(\Z))-\delta_{k3}. 
\end{align*}
The third and the fourth terms are always non-negative and 
the fifth term is not less than $-2/5$. It is easy to see that 
\[
\frac{k-8}{6}\leq \dim S_{2k-2}(SL_2(\Z))\leq \frac{k-1}{6}
\]
for any $k$, and by using this evaluation, 
we see that $f(5,k)>0$ for $k\geq 7$.
For $k=3$, $4$, $5$, $6$, by calculating the formula directly,
we see $f(5,3)=f(5,4)=0$ and $f(5,5)=f(5,6)=1$. 
So we proved the result for $p=5$. The case $p=2$ and $3$ are similarly done
and we omit the details.
So in the rest of the proof, we assume that $7\leq p$.
Our strategy is to show first that $f(p,k)>0$ for any prime 
$p\geq 7$ if $k$ is big enough, and that $f(p,k)>0$ 
for any $k\geq 3$ if  
$p$ is big enough. If we can say this, then there remain only 
finitely many $(p,k)$ 
that are not covered by the above range.
Then we calculate $f(p,k)$ directly for these remaining $(p,k)$ and 
show $f(p,k)\geq 0$ directly.   
First, for $p\geq 7$, we have 
\begin{align*}
f(p,k)& =c_2(p)B_{2,\chi}(k-1)(k-2)+c_6(p)h(\sqrt{-p})\left\{\begin{array}{cc}
-(k-2) & k: even \\
k-1 & k: odd 
\end{array}\right.
\\ & +c_9(p)h(\sqrt{-2p})(-1)^{k-1}\chi_9+c_{11}(p)h(\sqrt{-3p})(-1)^{k-1}\chi_{11}
\\ & +
(-1)^{k}\frac{a_ph(\sqrt{-p})}{2} \dim S_{2k-2}(SL_2(\Z))-\delta_{k3}
\end{align*}
where $c_i(p)$ are non-negative constant independent of $k$ 
but depending on $p \bmod 12$.
Here by the formula of $\chi_9$ and $\chi_{11}$, we see that 
$(-1)^{k-1}\chi_9\geq 0$ and $(-1)^{k-1}\chi_{11}\geq 0$, 
so we may ignore these terms for rough evaluation.
We evaluate $c_2(p)B_{2,\chi}$ first.
We denote by $D_0$ the discriminant of $\Q(\sqrt{p})$ and by $\chi$ the 
real character corresponding to $\Q(\sqrt{p})$.
Then we have 
\[
B_{2,\chi}=\frac{L(2,\chi)D_0\sqrt{D_0}}{\pi^2}
\]
by \cite{bernoulli} Theorem 9.6, where $L(s,\chi)$ is the Dirichlet $L$ function 
with respect to the character $\chi$. 
Since 
\[
L(2,\chi)=\prod_{p}(1-\chi(p)p^{-2})^{-1}
\geq \prod_{p}(1+p^{-2})^{-1}=\frac{\zeta(4)}{\zeta_(2)}=
\frac{\pi^2}{15},
\]
we have 
\[
B_{2,\chi}=\frac{L(2,\chi)D_0\sqrt{D_0}}{\pi^2}\geq 
\frac{D_0\sqrt{D_0}}{15}.
\]
For $p\equiv 1 \bmod 4$, we have 
\[
c_2(p)=\frac{1}{2^6\cdot 3}\left(9-\left(\frac{2}{p}\right)\right)
\geq \frac{7}{2^6\cdot 3}
\]
and $D_0=p$, so we have 
\[
c_2(p)B_{2,\chi}\geq \frac{7p^{3/2}}{2^6\cdot 3^2\cdot 5}
\]
On the other hand, if $p\equiv 3 \bmod 4$, then 
$c_2(p)=1/2^6\cdot 3$ and $D_0=4p$, so 
\[
c_2(p)B_{2,\chi}\geq \frac{8p^{3/2}}{2^6\cdot 3^2\cdot 5}.
\]
So we have $c_2(p)B_{2,\chi}\geq 7p^{3/2}/(2^6\cdot 3^2\cdot 5)$ 
in any case.

We explain 
an evaluation of the class numbers needed later.
Let $D<0$ be the fundamental discriminant of 
an imaginary imaginary quadratic field
$\Q(\sqrt{D})$, and $\chi_D$ is the character associated to $\Q(\sqrt{D})$
Then it is well known that for $|D|>3$, we have 
\[
L(1,\chi_D)=\frac{\pi h(\sqrt{D})}{\sqrt{|D|}} 
\]
(\cite{bernoulli} p. 164). On the other hand, by Siegel \cite{siegel} section 15, 
we have
\[
|L(1,\chi_D)|<2\log(|D|)+\frac{1}{2}
\]
so 
\begin{equation}\label{imaginaryclassno}
h(\sqrt{D})<\frac{1}{\pi}\biggl(2\sqrt{|D|}\log|D|+\frac{1}{2}\sqrt{|D|}\biggr).
\end{equation}

For further evaluation, it is convenient to treat the case
$k$ even and odd separately.
First we assume that $k$ is even.  
Then we have  $a_p\geq 1$, $c_6(p)\leq 1/2^4$ and 
$\dim S_{2k-2}(SL_2(\Z))\geq (k-8)/6$, so   
\begin{align*}
f(p,k)& \geq \frac{7p^{3/2}(k-1)(k-2)}{2^6\cdot 3^2\cdot 5}
-\frac{h(\sqrt{-p})(k-2)}{16}
+\frac{h(\sqrt{-p})(k-8)}{12}
\\ & = \frac{7p^{3/2}(k-1)(k-2)}{2^6\cdot 3}
+\frac{h(\sqrt{-p})(k-26)}{48}.
\end{align*}
The RHS is positive if $k\geq 26$. So we assume 
that $4\leq k\leq 26$. The last term is bigger than
$(-11/24)h(\sqrt{-p})$, which is the value at $k=4$. 
The first term is not smaller than the value at $k=4$.  
By \eqref{imaginaryclassno}, for both $p\equiv \pm 1 \bmod 4$, we have 
\[
h(\sqrt{-p})\leq \frac{1}{\pi}(4\sqrt{p}\log(4p)+\sqrt{p}).
\]
So if we put, 
\[
g(x)=\frac{7x^{3/2}(4-1)(4-2)}{2^6\cdot 3\dot 5}
-\frac{23}{48\pi}(4\sqrt{x}\log(x)+\sqrt{x}),
\]
then $f(x,k)\geq g(x)$ for $7\leq x$.
We have 
\[
\frac{d g(x)}{dx} =\frac{-1980+21\pi x-880\log(4x)}{960\pi x}\\
\]
If we write the numerator 
$h(x)=-1980+21\pi x-880\log(4x)$, then 
\[
\frac{d h(x)}{dx}  = 
\frac{-880+21\pi x}{x}.
\]
So $h'(x)$ is positive if $x\geq 14>\frac{880}{21\pi}$. 
This means that $h(x)$ is increasing monotonously
for $x\geq 14$. 
Here we have  $h(112)=36.80/..>0$, so 
$h(x)>0$ and $g'(x)>0$ for $x\geq 112$. 
So $g(x)$ increases monotonously for $x>112$. 
But we have $g(300)=1.58...>0$, so 
$g(x)>0$ if $x\geq 300$. 
As a whole, for any $4\leq k$ and $x\geq 300$,  
and for $k\geq 26$ and any $7\leq x$, we have 
$f(x,k)>0$. For $4\leq k\leq 25$ and $x<300$, 
we see $f(x,k)>0$ by checking directly by the 
explicit formula.

Next we consider the case when $k$ is odd. 
We assume $p\geq 7$ as before.
We have $c_6(p)\geq 0$, so in this case, we have 
\[
f(p,k)\geq \frac{7p^{3/2}}{2^6\cdot 3^2\cdot 5}
-\frac{a_ph(\sqrt{-p})}{2}\dim S_{2k-2}(SL_2(\Z)).
\]
For $k\leq 6$, we have $S_{2k-2}(SL_2(\Z))=0$ 
and $f(p,k)>0$, so  we may assume $k\geq 7$. 
Since $(k-1)/6\geq \dim S_{2k-2}(SL_2(\Z))$ and 
$a_p\leq 4$, we have
\[
f(p,k)\geq \frac{7p^{3/2}}{2^6\cdot 3^2\cdot 5}(k-1)(k-2)
-\frac{k-1}{3}h(\sqrt{-p}).
\]
So if we put 
\begin{align*}
g(x,k)& =\frac{7x^{3/2}(k-2)}{2^6\cdot 3^2\cdot 5}-
\frac{1}{3\pi}(4\sqrt{x}\log(4x)+\sqrt{x})
\\ & =
\frac{\sqrt{x}(-960+7(k-2)\pi x-3840\log(4x))}{288 \pi},
\end{align*}
then $f(p,k)\geq (k-1)g(x,k)$. 
We put 
\[
h(x,k)=-960+7(k-2)\pi x-3940 \log(4x).
\]
Then 
\[
h_x(x,k)=\frac{d h(x,k)}{dx}=\frac{-3840+7\pi(k-2)x}{x}
\]
Since we assumed $k\geq 7$, this increases monotonously 
with respect to $x$, and bigger than $0$ for $x>35$.
So $g(x,k)$ increases monotonously for $x>35$.
We have 
\[
g(250,7)=0.0055...>0
\]
so we have 
\[
0<g(x,7)\leq g(x,k)
\]
for any $x>250$ and $k\geq 7$.
On the other hand, we have 
\[
h_x(7,27)=1.2...>0
\]
so if $k\geq 27$, then $h_x(x,k)>0$ for any $x\geq 7$.
So for any $k\geq 27$, $h(x,k)$ is increasing monotonously 
for $x$. We have 
\[
h(7,92)=98.75...>0
\]
so $g(x,92)>0$ for any $x\geq 7$, but for a fixed $x>0$ and 
$k_1\leq k_2$, we see easily by definition that 
$g(x,k_1)\leq g(x,k_2)$, so $g(x,k)>0$ for any $x\geq 7$ and $k\geq 92$. 
So the remaining cases are $(x,k)$ with 
$x<250$ and $k<92$. For these finitely many cases,
by calculating $f(p,k)$ directly, we see that 
$f(p,k)\geq 0$ and $=0$ only when $(k,p)$ are listed 
in Theorem \ref{plusminusdiff}.
\end{proof}

\section{Numerical Examples}\label{example}
We give several numerical examples of dimensions.
The calculation in this section are mostly done by using
computer software Mathematica \cite{mathematica} and PARI-GP
\cite{pari}.

\subsection{The case $p=5$ and $p=7$}
First we assume $p=5$ for a while.
By Theorem \ref{compactdim} and \ref{nonprincipaldim}, we have 
\begin{align*}
\sum_{f=0}^{\infty}Tr_{f,f}(R(\pi))t^f
& =
\frac{1+t^{11}}{(1-t^2)(1-t^4)(1+t^3)(1+t^5)},
\\
\sum_{f=0}^{\infty}\dim \fM_{f,f}(U_{npg}(5)) t^f
& =
\frac{1+t^{11}}{(1-t^2)(1-t^3)(1-t^4)(1-t^5)},
\end{align*}
so we have 
\begin{align*}
\sum_{f=0}^{\infty}\dim \fM_{f,f}^{+}(U_{npg}(5)) t^f
& = \frac{(1+t^8)(1+t^{11})}{(1-t^2)(1-t^4)(1-t^6)(1-t^{10})},\\
\sum_{f=0}^{\infty}\dim \fM_{f,f}^{-}(U_{npg}(5)) t^f 
& = \frac{(t^3+t^5)(1+t^{11})}{(1-t^2)(1-t^4)(1-t^6)(1-t^{10})}.
\end{align*}
Next, from these result, we will give dimension formulas for 
paramodular forms of plus and minus signs.
We have  
\[
\sum_{k=0}^{\infty}\dim S_{k}(Sp(2,\Z))t^k=\frac{t^{10}+t^{12}-t^{22}+t^{35}}
{(1-t^4)(1-t^6)(1-t^{10})(1-t^{12})},
\]
and 
\[
\sum_{k=0}^{\infty}\dim S_{2k-2}(SL_2(\Z)))t^{k}=
\frac{t^7}{(1-t^2)(1-t^3)}=\frac{t^7+t^{10}}{(1-t^2)(1-t^6)},
\]
and $S_2(\Gamma_0(5))=0$. Using these and Theorem \ref{paramodular}, we have 
\begin{align*}
\sum_{k=0}^{\infty}\dim S_{k}^+(K(5))t^k
& =
\frac{Q_{+}^{(5)}(t)}{(1-t^4)(1-t^6)(1-t^{10})(1-t^{12})},
\\
\sum_{k=0}^{\infty}\dim S_k^{-}(K(5))t^k
& = 
\frac{Q_{-}^{(5)}(t)}{(1-t^4)(1-t^6)(1-t^{10})(1-t^{12})},
\end{align*}
where 
\begin{align*}
Q_{+}^{(5)}(t) = & t^6 + 2 t^8 + 3 t^{10} + 3 t^{12} + 2 t^{14} + 2 t^{16} + t^{17} + t^{18}
\\ &  +  2 t^{19} + 2 t^{21} - t^{22} + 2 t^{23} + 2 t^{25} + 2 t^{27} + t^{29} 
+ t^{35},
\\
Q_{-}^{(5)}(t) = & 
t^5 + t^7 + t^9 + 2 t^{11} + 2 t^{13} + t^{14} + 
 2 t^{15} + t^{16} + t^{17} + t^{18} 
 \\ & + t^{19} + 2 t^{20} + t^{21} + 3 t^{22} + t^{23} + 
 3 t^{24} + t^{26} + t^{28} + t^{30} - t^{34}.
\end{align*}
By the way, adjusting non cusp forms by \cite{satake},  we have 
\begin{align*}
\sum_{k=0}^{\infty}\dim A_k^+(K(5))t^k 
& =
\frac{P_+^{(5)}(t)}{(1-t^4)(1-t^6)(1-t^{10})(1-t^{12})},
\\
\sum_{k=0}^{\infty}\dim A_k^-(K(5))t^k & = 
\frac{P_-^{(5)}(t)}{(1-t^4)(1-t^6)(1-t^{10})(1-t^{12})}, 
\\
\sum_{k=0}^{\infty}\dim A_{k}(K(5))t^k & =
\frac{P^{(5)}(t)}{(1-t^4)(1-t^5)(1-t^6)(1-t^{12})},
\end{align*}
where 
\begin{align*}
P^{(5)}_+(t) = & 1 + t^6 + 2 t^8 + 2 t^{10} + 2 t^{12} + 2 t^{14} + 2 t^{16} + t^{17} + t^{18} + 
\\ &  2 t^{19} + 2 t^{21} + 2 t^{23} + 2 t^{25} + 2 t^{27} + t^{29} + t^{35},
\\
P^{(5)}_-(t)= & t^5 + t^7 + t^9 + 2 t^{11} + t^{12} + 2 t^{13} + t^{14} + 
 2 t^{15} + t^{16} + t^{17} + t^{18} 
 \\ & + t^{19} + 2 t^{20} + t^{21} + 2 t^{22}
  + t^{23} +  2 t^{24} + t^{26} + t^{28} + t^{30},
\\
P^{(5)}(t)= & 1 + t^6 + t^7 + 2 t^8 + t^9 + 2 t^{10} + t^{11} + 2 t^{12} + 2 t^{14} +  2 t^{16} + 2 t^{18} 
\\ & + t^{19} + 2 t^{20} + t^{21} + 2 t^{22} + t^{23} + t^{24} 
 + t^{30}.
\end{align*}
The last formula is easily deduced also from the dimension formula 
in \cite{ibuparamodular} and has been explicitly written in \cite{marschner}.
Note that the denominator for $A_k(K(5))$ above is different from those for 
$A_k^{\pm}(K(5))$. 

Next we consider the case $p=7$. 
By using the formula in Theorem \ref{compactdim} and Theorem \ref{nonprincipaldim}
for $Tr_{k-3,\, k-3}(R(\pi))$ and $\dim \fM_{k-3,\, k-3}(U_{npg}(7))$, we have 
\begin{align*}
\sum_{f=0}^{\infty}\dim\fM_{f,f}^+(U_{npg}(7))t^f
& =
\frac{1 + t^4 + t^6 + t^8 + t^{11} + t^{13} + t^{15} + t^{19}}
{(1-t^2)(1-t^4)(1-t^6)(1-t^{10})}, \\
\sum_{f=0}^{\infty}\dim\fM_{f,f}^-(U_{npg}(7))t^f
& =
\frac{t + t^3 + t^5 + t^9 + t^{10} + t^{14} + t^{16} + t^{18}}
{(1-t^2)(1-t^4)(1-t^6)(1-t^{10})}. 
\end{align*}
In the same way as before, we can calculate 
$\dim S_k^{\pm}(K(7))$ and $\dim A_k^{\pm}(K(7))$ for $k\geq 3$ using 
the above formulas.
We know that $A_1(K(p))=0$ and $A_2(K(p))=S_2(K(p))$ for general prime $p$.
Since $\dim S_8(K(7))=2$ and $\dim A_6(K(7))=3$, we have 
$A_2(K(7))=S_2(K(7))=0$. By $S_2(\Gamma_0(7))=0$, using Theorem \ref{paramodular},  we have 

\begin{align*}
\sum_{k=0}^{\infty}\dim S_k^{+}(K(7))t^k & = 
\frac{Q^{(7)}_{+}(t)}{(1-t^4)^2(1-t^6)(1-t^{12})},
\\
\sum_{k=0}^{\infty}\dim S_k^{-}(K(7))t^k & = 
\frac{Q^{(7)}_{-}(t)}{(1-t^4)^2(1-t^6)(1-t^{12})},
\end{align*}
where 
\begin{align*}
Q_{+}^{(7)}(t)= &  t^4 + 2 t^6 + 2 t^8 + 2 t^{10} + 2 t^{12} + t^{13} + t^{14} 
\\ & + t^{15} + t^{17} + 2 t^{19} + 2 t^{21} + 2 t^{23} + t^{29}, \\
Q_{-}^{(7)}(t) = &  t^5 + 2 t^7 + 2 t^9 + 2 t^{11} + t^{13} + t^{14} + t^{15}
\\ &  + 2 t^{16} + t^{17} + 2 t^{18} + 2 t^{20} + 2 t^{22} + 2 t^{24} - t^{28}.\\
\end{align*}
Counting the modular forms on the boundary, we have 
\begin{align*}
\sum_{k=0}^{\infty}\dim A_k^{+}(K(7))t^k & = \frac{P_{+}^{(7)}(t)}
{(1-t^4)^2(1-t^6)(1-t^{12})},
\\
\sum_{k=0}^{\infty}\dim A_k^{-}(K(7))t^k & = 
 \frac{P_{-}^{(7)}(t)}
{(1-t^4)^2(1-t^6)(1-t^{12})},
\\
\sum_{k=0}^{\infty}\dim A_k(K(7))t^k & = 
 \frac{P^{(7)}(t)}
{(1-t^4)^2(1-t^6)(1-t^{12})}.\end{align*}

\begin{align*}
P_{+}^{(7)}(t) = & 1 + 2 t^6 + 2 t^8 + 2 t^{10} + t^{12} + t^{13} + t^{14} + t^{15} + t^{16} 
\\ & + t^{17} + 2 t^{19} + 2 t^{21} + 2 t^{23} + t^{29},
\\
P_{-}^{(7)}(t) = & t^5 + 2 t^7 + 2 t^9 + 
 2 t^{11} + t^{12} + t^{13} + t^{14} + t^{15} + t^{16}
 \\ &  + t^{17} + 2 t^{18} + 2 t^{20} + 2 t^{22} + t^{24},
 \\
 P^{(7)}(t) = & 
 1 + t^5 + 2 t^6 + 2 t^7 + 2 t^8 + 2 t^9 + 2 t^{10} + 2 t^{11} + 2 t^{12} + 
 2 t^{13}
\\ &  + 2 t^{14} + 2 t^{15} 
  + 2 t^{16} + 2 t^{17} + 2 t^{18} + 2 t^{19} + 
 2 t^{20} + 2 t^{21} 
\\ & + 2 t^{22} + 2 t^{23} + t^{24} + t^{29}.
\end{align*} 

The last formula is easily deduced also from \cite{ibuparamodular} and 
has been explicitly written in \cite{williams}.

Explicit generators of the rings  
$\oplus_{k=0}^{\infty}A_k^{+}(K(p))$ for $p=5$ and $7$ 
and their relations have been given in \cite{williams}, 
though the generating functions of dimensions 
have not been given in the paper \cite{williams}.
Replying to the author's question, Professor Williams 
kindly informed the author that the above generating 
functions for $\dim A_k^+(K(p))$ for 
$p=5$, $7$ can be obtained 
also by his method and the results are completely
the same. 

\subsection{Small primes $p$ and palindromic Hilbert series}
We consider the graded rings $A(K(p))=\oplus_{k=0}^{\infty}A_k(K(p))$ and 
$A^+(K(p))=\oplus_{k=0}^{\infty}A_k^+(K(p))$. 
For any Noetherian graded integral domain 
$B=\oplus_{k=0}^{\infty}B_k$ over $\C$ 
with $B_0=\C$, we call $F(B,t)=\sum_{k=0}^{\infty}\dim B_k t^k$ the Hilbert 
series of $B$. 
We say that $F(B,t)$ is palindromic if $F(B,1/t)=(-1)^mt^{\ell}F(B,t)$ 
for some $\ell>0$, where $m$ is the Krull dimension of $B$.
Under the assumption that $B$ is a Cohen-Macaulay ring, 
it is known that $B$ is a Gorenstein ring 
if and only if $F(B,t)$ is palindromic (\cite{stanley1} Theorem 4.4, \cite{stanley2} p. 503. I learned these references 
from R. Schmidt).
For $p=2$, $3$, we see that 
$A(K(p))$ and $A^+(K(p))$ are Cohen-Macaulay 
since the rings are free over rings generated by $4$ algebraically independent 
forms by \cite{ibuonodera} Theorem 1, Corollary 1 and \cite{dern}  Lemma 5.4, 5.5.
They are also Gorenstein since their Hilbert series are palindromic.
For $p=5$, $7$, the ring $A(K(p))$ are Gorenstein by 
B. Williams \cite{williams} Corollary 16 and 24. He also checked that 
$A^+(K(5))$ and $A^+(K(7))$ are Gorenstein (private communication).

Cris Poor asked the author for which primes $p$, the 
Hilbert series of $A(K(p))$ and $A^+(K(p))$ 
are palindromic. 
The proposition below answer this for small primes.
We denote by $J_{2,p}$ the space of Jacobi forms of 
$SL_2(\Z)$ of weight $2$ of 
index $p$.

\begin{prop}\label{palindromic}
Let $p$ be a prime less than $100$. Among this range, 
we have the following results.
\\
(i) $F(A(K(p)),t)$ is palindromic if and only if $p=2$, $3$, $5$, $7$, $13$. 
\\
(ii) $F(A^+(K(p)),t)$ is palindromic if and only if 
$p=2$, $3$, $5$, $11$, $13$, $17$, $19$, $23$, $29$, $31$, $41$, $47$, 
$59$, $71$. \\
(iii) The primes in (1) are exactly those such that $S_2(\Gamma_0(p))=0$.
The primes in (ii) are exactly those such that 
$J_{2,p}=0$. 
\end{prop}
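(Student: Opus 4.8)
The plan is to reduce palindromy to a finite, checkable property of a numerator polynomial, to verify it prime by prime, and then to read off the arithmetic characterization in (iii). First I would assemble the two Hilbert series explicitly. By Theorem \ref{paramodular}, together with the Satake boundary correction already used for $p=5,7$ (namely $\dim A_k^{+}(K(p))=\dim S_k^{+}(K(p))+\dim A_k(SL_2(\Z))$ and $\dim A_k^{-}(K(p))=\dim S_k^{-}(K(p))+\dim S_k(SL_2(\Z))$) and the low-weight inputs $A_0=\C$, $A_1=0$, $A_2=S_2$, the dimensions $\dim A_k^{\pm}(K(p))$ are, for each fixed $p$, a quasi-polynomial in $k$ of period dividing $60$. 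Summing such a quasi-polynomial yields rational functions
\[
F(A(K(p)),t)=\frac{P(t)}{\prod_{i=1}^{4}(1-t^{d_i})},\qquad
F(A^{+}(K(p)),t)=\frac{P^{+}(t)}{\prod_{i=1}^{4}(1-t^{e_i})},
\]
each with a denominator of exactly $m=4$ factors, $m$ being the Krull dimension of these three-dimensional graded rings. The only delicate bookkeeping here is that Theorem \ref{paramodular} supplies $k\ge 3$, so the weights $k\le 2$ (and in particular the weight-two Gritsenko/Jacobi contribution) must be inserted by hand.

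The reduction step is standard. Writing $Q(t)=\prod_{i=1}^{4}(1-t^{d_i})$ one has $Q(1/t)=(-1)^{4}t^{-\sum d_i}Q(t)$, hence $F(1/t)=t^{\sum d_i-\deg P}\,\widehat P(t)/Q(t)$, where $\widehat P(t)=t^{\deg P}P(1/t)$ is the coefficient-reversal of $P$. Since $P(0)=\dim A_0(K(p))=1\neq0$, the functional equation $F(1/t)=(-1)^{m}t^{\ell}F(t)$ holds for some $\ell$ \emph{if and only if} $P=\widehat P$, i.e. $P$ is palindromic; the same holds for $P^{+}$. Thus for each of the twenty-five primes $p<100$ I would compute $P$ and $P^{+}$ and test whether their coefficient sequences are symmetric. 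This finite computation produces the two lists in (i) and (ii) directly, covering both the cases where palindromy holds and those where it fails, with the known series for $p=2,3,5,7$ serving as consistency checks.

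For part (iii) I would compute the two arithmetic invariants independently and match them to the lists. The quantity $\dim S_2(\Gamma_0(p))$ is the genus of $X_0(p)$, which vanishes exactly for $p\in\{2,3,5,7,13\}$ by the classical genus formula; this is precisely list (i). The quantity $\dim J_{2,p}$ is obtained from the Eichler--Zagier dimension formula for Jacobi forms of weight $2$ and index $p$ (equivalently, via theta decomposition, a weight-$3/2$ vector-valued space for the Weil representation attached to $\Z/2p\Z$); tabulating it for $p<100$ should single out exactly the primes of list (ii). Matching these two independent computations against Part A then settles (iii) in the stated range.

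The conceptual content, and the main obstacle, is to explain \emph{why} these weight-two quantities govern palindromy rather than merely to observe the coincidence below $100$. The point I would try to make precise is that the bulk of $F$ is self-dual: the Siegel part $\sum\dim S_k(Sp(2,\Z))t^k$, the genuine non-lift new forms matched to $\fM^{\mp}_{k-3,k-3}(U_{npg}(p))$ in Theorem \ref{paramodular}, and the Eisenstein boundary all respect the reflection $t\mapsto 1/t$, so the defect $F(1/t)-(-1)^{m}t^{\ell}F(t)$ is a low-weight correction traceable to the lift terms of Theorem \ref{paramodular} — the weight-two Yoshida-lift input $S_2(\Gamma_0(p))$ for the full ring, and the weight-two Gritsenko-lift input $J_{2,p}$ for the plus ring. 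The hard part will be turning this into an exact, uniform identity for the defect, since a weight-two lift need not even create a weight-two generator (for instance the plus-series for small $p$ can have vanishing $t^2$-coefficient), so the obstruction may instead be a higher-weight syzygy. Because Proposition \ref{palindromic} is restricted to $p<100$, the complete and honest proof is the finite verification of Part A combined with the finite arithmetic computations of Part B, with the defect analysis of this paragraph supplying the explanation for the observed equivalences rather than an unconditional theorem.
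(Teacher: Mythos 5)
Your proposal follows essentially the same route as the paper: the proof there is exactly the finite verification you describe, namely assembling the Hilbert series for each prime $p<100$ from Theorems \ref{compactdim}, \ref{nonprincipaldim} and \ref{paramodular}, testing the numerator for coefficient symmetry, and matching the resulting lists against the vanishing of $\dim S_2(\Gamma_0(p))$ and $\dim J_{2,p}$. Your reduction of the functional equation $F(1/t)=(-1)^m t^{\ell}F(t)$ to palindromy of the numerator over the fixed four-factor denominator is a correct and slightly more explicit justification than the paper bothers to give, and your closing caveat that the "defect" explanation is heuristic rather than a proof matches the paper's own admission that the theoretical meaning of (iii) is unclear.

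One ingredient is missing and should be made explicit: the coefficient of $t^2$ in both series. You correctly note that $A_2(K(p))=S_2(K(p))$ must be "inserted by hand," but no formula in the paper (nor Theorem \ref{paramodular}, which requires $k\ge 3$) computes $\dim S_2(K(p))$. The paper supplies this via the computational result of Poor--Yuen that for all $p<277$ the space $S_2(K(p))$ is spanned by Gritsenko lifts of $J_{2,p}$, whence $\dim S_2(K(p))=\dim J_{2,p}$, together with the fact that these lifts all have Atkin--Lehner sign $+$ (needed to place the weight-$2$ contribution entirely in $A_2^{+}(K(p))$ and put $A_2^{-}(K(p))=0$). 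Without citing this input, the $t^2$-coefficients of $F(A(K(p)),t)$ and $F(A^{+}(K(p)),t)$ are not determined and the palindromy test cannot be run; with it, your Part A and Part B go through exactly as in the paper.
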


By \cite{stanley1}, \cite{stanley2} and the 
palindromic property, it is natural to ask 
if $A(K(p))$ and $A^+(K(p))$ for primes 
in the above Proposition are
Gorenstein. Seeing the Hilbert series,  our guess is that 
the rings $A(K(p))$ and $A^+(K(p))$ are not  
Cohen-Macaulay for $p<100$ except for the above listed primes.
The theoretical meaning of (iii) is not clear.

The proof of Proposition \ref{palindromic} merely consists of 
explicit calculations of all the Hilbert series 
for this range by Theorem \ref{compactdim} and \ref{paramodular} except for the point
that we need $\dim A_1(K(p))$ and $\dim A_2(K(p))$. 
We know that we always have $A_1(K(p))=0$ (e.g. \cite{ibuweightthree}). 
We see easily that $A_2(K(p))=S_2(K(p))$ by the boundary structure of the 
Satake compactification of $K(p)\backslash H_2$ and by the fact $A_2(SL_2(\Z))=0$.
For the range of $p<277$, the space $S_2(K(p))$ are all spanned 
by the Gritsenko lift from $J_{2,p}$ by \cite{poor}, so  
we have $\dim S_2(K(p))=\dim J_{2,p}$ for such $p$.
These are all Atkin--Lehner plus.
The latter dimension is given by the table
\[
{\footnotesize
\begin{array}{|c|ccccccccccccccc|} \hline
p& \leq 31 & 37 & 41 & 43 & 47& 53 & 59 & 61 & 67 & 71 & 73 & 79 &  83 & 89 & 97  \\ 
\hline 
\dim J_{2,p} & 0 & 1 & 0 & 1 & 0 & 1 & 0 & 1 & 2 & 0 & 2 & 1 & 1 & 1 & 3  \\ \hline
\end{array}
}
\]
(\cite{eichlerzagier} p. 132).
We list below only palindromic Hilbert series 
for the range of primes $11\leq p\leq 71$. 
Non-palindromic cases has been calculated similarly but 
omitted here to avoid lengthy statement 
except for one example $A(K(11))$. 
It seems there is no 
way to reasonably adjust the numerator of $F(A(K(11)),t)$ as positive coefficients,  
so maybe $A(K(11))$ is not Cohen-Macaulay. 
The other non-palindromic Hilbert series for this range suggest the same thing.
In the list below, sometimes we give non-irreducible polynomials as the numerators.
We have the following reasons.
We prefer that factors $1-t^a$ of the denominator suggests that there
exists a form of weight $a$. But for example, $P^{(13)}(t)$ is divisible by 
$(1+t)(1+t^2)$ and if we divide the denominator by this, then the denominator contains 
the factor $1-t$ in spite of the fact that there is no paramodular form of weight $1$. 
The second reason is that we prefer to give polynomials with non-negative 
coefficients if possible in order to guess easily 
possible module structures over the rings generated by four 
algebraically independent forms corresponding with the denominator.
We note that $P^{(11)}_+(t)$, $P^{(19)}_+(t)$, $P^{(29)}_+(t)$, $P^{(31)}_+(t)$ 
and $P^{(41)}_+(t)$ below are all divisible by $1+t^5$ but divided polynomials
contain negative coefficients.

\begin{align*}
\sum_{k=0}^{\infty}\dim A_k^+(K(11))t^k& =\frac{P^{(11)}_+(t)}
{(1-t^4)(1-t^6)(1-t^{10})(1-t^{12})}
\\
\sum_{k=0}^{\infty}\dim A_k(K(11))t^k & = \frac{P^{(11)}(t)}
{(1-t^4)(1-t^5)(1-t^6)(1-t^{12})}
\end{align*}
\begin{align*}
P^{(11)}_+(t)=& 1 + t^4 + 3 t^6 + 5 t^8 + 7 t^{10} + t^{11}
 + 9 t^{12} + 3 t^{13}
\\ &  + 10 t^{14} +5 t^{15} + 9 t^{16} + 7 t^{17} + 7 t^{18}
  + 9 t^{19} + 5 t^{20}
\\ &  + 10 t^{21}  + 3 t^{22} + 9 t^{23} + t^{24} 
 + 7 t^{25} + 5 t^{27}
 + 3 t^{29} + t^{31} + t^{35}
\\ 
P^{(11)}(t) = & 1 + t^4 + t^5 + 3 t^6 + 3 t^7 + 5 t^8 + 4 t^9 + 6 t^{10} 
+ 6 t^{11} +  8 t^{12} + 7 t^{13} 
\\ & + 10 t^{14}+ 9 t^{15} + 9 t^{16}  + 8 t^{17} + 8 t^{18}
 +7 t^{19} + 7 t^{20}
\\ & 
  + 6 t^{21} + 6 t^{22} + 4 t^{23} + 
 3 t^{24} + t^{25} - t^{27} - t^{29} + t^{30}
\end{align*}
\begin{align*}
\sum_{k=0}^{\infty}\dim A_k^+(K(13))t^k& =\frac{P^{(13)}_{+}(t)}
{(1-t^4)^2(1-t^6)(1-t^{12})}
\\
\sum_{k=0}^{\infty}\dim A_k(K(13))t^k & =
\frac{P^{(13)}(t)}{(1-t^4)^2(1-t^6)(1-t^{12})} 
\end{align*}
\begin{align*}
P^{(13)}_+(t) = & 1 + t^4 + 5 t^6 + 6 t^8 + 6 t^{10} + t^{11} + 5 t^{12} 
+ 4 t^{13} + 5 t^{14}
\\ &  + 
 5 t^{15} + 4 t^{16} + 5 t^{17} + t^{18} + 6 t^{19} + 6 t^{21} + 
 5 t^{23} + t^{25} + t^{29}  
\\ 
P^{(13)}(t) = & 1 + t^3 + t^4 + 3 t^5 + 5 t^6 + 5 t^7 + 6 t^8 + 6 t^9 + 7 t^{10} 
+ 7 t^{11} + 8 t^{12}
\\ &  + 9 t^{13} + 9 t^{14} 
 + 9 t^{15} + 9 t^{16} + 8 t^{17} + 
 7 t^{18}  + 7 t^{19}+ 6 t^{20} 
\\ & + 6 t^{21} + 5 t^{22} + 5 t^{23} + 
 3 t^{24} + t^{25} + t^{26} + t^{29}
\end{align*}
\begin{align*}
\sum_{k=0}^{\infty}\dim A_k^+(K(17))t^k& =\frac{P^{(17)}_{+}(t)}
{(1-t^4)^2(1-t^6)(1-t^{12})}
\end{align*}
\begin{align*}
P^{(17)}_+(t)  = & 1 + t^4 + 6 t^6 + 9 t^8 + t^9 + 10 t^{10} + 4 t^{11} 
+ 9 t^{12} + 8 t^{13}+ 9 t^{14} + 9 t^{15}
\\ &  + 8 t^{16} + 9 t^{17} + 4 t^{18} + 10 t^{19}
  + t^{20} +  9 t^{21} + 6 t^{23} + t^{25} + t^{29} 
\end{align*}
\begin{align*}
\sum_{k=0}^{\infty}\dim A_k^+(K(19))t^k& =\frac{P^{(19)}_{+}(t)}
{(1-t^4)(1-t^6)(1-t^{10})(1-t^{12})}
\end{align*}
\begin{align*}
P^{(19)}_+(t) = &
1 + 3 t^4 + 8 t^6 + 14 t^8 + t^9 + 20 t^{10} + 4 t^{11} + 26 t^{12} + 
 10 t^{13} + 29 t^{14}
\\ &  + 16 t^{15} + 27 t^{16} + 22 t^{17} + 22 t^{18} + 
 27 t^{19} + 16 t^{20} + 29 t^{21} + 10 t^{22}
\\ &  + 26 t^{23} + 4 t^{24} + 
 20 t^{25} + t^{26} + 14 t^{27} + 8 t^{29} + 3 t^{31} + t^{35},
\end{align*}
\begin{align*}
\sum_{k=0}^{\infty}\dim A_k^+(K(23))t^k& =\frac{P^{(23)}_{+}(t)}
{(1-t^4)^2(1-t^6)(1-t^{12})}
\end{align*}
\begin{align*}
P^{(23)}_+(t) = & 1 + 2 t^4 + 9 t^6 + t^7 + 14 t^8 + 4 t^9 + 17 t^{10}
 + 9 t^{11} + 17 t^{12}
\\ &  + 15 t^{13} + 17 t^{14} + 17 t^{15} + 15 t^{16} 
 + 17 t^{17}  + 9 t^{18} + 17 t^{19} 
\\ &+ 4 t^{20} + 14 t^{21} + t^{22} + 9 t^{23} + 2 t^{25} + t^{29}
 - 2 t^{28} + t^{29}
\end{align*}
\begin{align*}
\sum_{k=0}^{\infty}\dim A_k^+(K(29))t^k& =\frac{P^{(29)}_{+}(t)}
{(1-t^4)(1-t^6)(1-t^{10})(1-t^{12})}
\end{align*}
\begin{align*}
P^{(29)}_+(t) = & 
1 + 4 t^4 + 14 t^6 + t^7 + 27 t^8 + 5 t^9 + 41 t^{10} + 15 t^{11}
\\ & +  55 t^{12} + 29 t^{13} + 65 t^{14} + 43 t^{15} + 65 t^{16} 
 + 56 t^{17} 
\\ & + 
 56 t^{18} + 65 t^{19} + 43 t^{20} + 65 t^{21} + 29 t^{22} + 55 t^{23}
\\ &  + 
 15 t^{24} + 41 t^{25} + 5 t^{26} + 27 t^{27} + t^{28} + 14 t^{29} + 4 t^{31} 
+ t^{35}.
\end{align*}
\begin{align*}
\sum_{k=0}^{\infty}\dim A_k^+(K(31))t^k& =\frac{P^{(31)}_{+}(t)}
{(1-t^4)(1-t^6)(1-t^{10})(1-t^{12})}
\end{align*}
\begin{align*}
P^{(31)}_+(t) = & 1 + 6 t^4 + 17 t^6 + t^7 + 32 t^8 + 6 t^9 + 48 t^{10}
  + 16 t^{11} \\ &  + 64 t^{12} + 32 t^{13} + 74 t^{14} + 48 t^{15} + 73 t^{16}
  + 63 t^{17} \\ & + 63 t^{18} + 73 t^{19} + 48 t^{20} + 74 t^{21} + 32 t^{22}
  + 64 t^{23} \\ & +  16 t^{24} + 48 t^{25} + 6 t^{26} + 32 t^{27}
 + t^{28} + 17 t^{29} + 6 t^{31} + t^{35}.\\
\end{align*}
\begin{align*}
\sum_{k=0}^{\infty}\dim A_k^+(K(41))t^k & = 
\frac{P^{(41)}_{+}(t)}{(1-t^4)(1-t^6)(1-t^{10})(1-t^{12})}
\end{align*}
\begin{align*}
P^{(41)}_+(t) & = 
1 + 7 t^4 + 24 t^6 + 3 t^7 + 49 t^8 + 14 t^9 + 77 t^{10} + 35 t^{11} + 
 105 t^{12}
\\ &  + 63 t^{13} + 126 t^{14} + 91 t^{15} + 130 t^{16} + 116 t^{17} + 
 116 t^{18} + 130 t^{19} 
\\ & + 91 t^{20} + 126 t^{21} + 63 t^{22} + 105 t^{23} + 
 35 t^{24} + 77 t^{25} + 14 t^{26} 
\\ & + 49 t^{27} + 3 t^{28} + 24 t^{29} + 
 7 t^{31} + t^{35}.
\end{align*}
\begin{align*}
\sum_{k=0}^{\infty}\dim A_k^+(K(47))t^k & = 
\frac{P^{(47)}_{+}(t)}{(1-t^4)^2(1-t^6)(1-t^{12})}
\end{align*}
\begin{align*}
P^{(47)}_+(t) & = 1 + 7 t^4 + t^5 + 27 t^6 + 8 t^7 + 49 t^8 + 25 t^9
 + 66 t^{10} + 47 t^{11}
 \\ &  + 72 t^{12} + 66 t^{13} + 73 t^{14} + 73 t^{15} + 66 t^{16} + 
 72 t^{17} + 47 t^{18}
 \\ &  + 66 t^{19} + 25 t^{20} + 49 t^{21} + 8 t^{22} + 
 27 t^{23} + t^{24} + 7 t^{25} + t^{29}
\end{align*}
\begin{align*}
\sum_{k=0}^{\infty}\dim A_k^+(K(59))t^k & = 
\frac{P^{(59)}_{+}(t)}{(1-t^4)(1-t^5)(1-t^6)(1-t^{12})}
\end{align*}
\begin{align*}
P^{(59)}_+(t)  =& 1 + 11 t^4 + 40 t^6 + 12 t^7 + 87 t^8 + 30 t^9 + 144 t^{10} + 48 t^{11} + 
 190 t^{12}
 \\ &  + 59 t^{13} + 219 t^{14} + 59 t^{15} + 219 t^{16} + 59 t^{17} + 
 190 t^{18}
 \\ & + 48 t^{19} + 144 t^{20} + 30 t^{21} + 87 t^{22} + 12 t^{23} + 
 40 t^{24} + 11 t^{26} + t^{30}
\end{align*}
\begin{align*}
\sum_{k=0}^{\infty}\dim A_k^+(K(71))t^k & = 
\frac{P^{(71)}_{+}(t)}{(1-t^4)(1-t^5)(1-t^6)(1-t^{12})}\\
\end{align*}
\begin{align*}
P^{(71)}_{+}(t) = & 1 + 15 t^4 + t^5 + 56 t^6 + 19 t^7 + 123 t^8 + 47 t^9 + 204 t^{10}
\\ &  + 75 t^{11} + 270 t^{12} + 92 t^{13} + 311 t^{14} + 93 t^{15} + 311 t^{16}
\\ &  + 
 92 t^{17} + 270 t^{18} + 75 t^{19} + 204 t^{20} + 47 t^{21} + 123 t^{22} + 
 19 t^{23} 
\\ & + 56 t^{24}  + t^{25} + 15 t^{26} + t^{30}
\end{align*}

\subsection{The case of $j>0$}
When $j=2$, we have the following results.
\begin{align*}
\sum_{f=0}^{\infty}\dim \fM_{f+2,f}^{+}(U_{npg}(2))t^f
& = \frac{t^8(1+t^2+2t^4-t^5+t^6-t^9-t^{12}+t^{13})}
{(1-t^4)(1-t^5)(1-t^6)(1-t^8)}, \\
\sum_{f=0}^{\infty} \dim \fM_{f+2,f}^{-}(U_{npg}(2))t^f
& = \frac{t^7}{(1-t^2)(1-t^4)(1-t^5)(1-t^8)}, \\
\sum_{f=0}^{\infty} \dim \fM_{f+2,f}^{+}(U_{npg}(3))t^f
& = \frac{t^6(1+t^2+t^3-t^5+t^6-t^7-t^8+t^9)}
{(1-t^2)(1-t^4)(1-t^5)(1-t^6)},
\\
\sum_{f=0}^{\infty}\dim \fM_{f+2,f}^{-}(U_{npg}(3))t^f
& = \frac{t^5}{(1-t^2)^2(1-t^5)(1-t^6)}.
\end{align*}

Here we have
 $\dim S_4(\Gamma_0(2))=\dim S_4(\Gamma_0(3))=0$, so 
 there is no Yoshida lifting. Since $j>0$, there is no Saito--Kurokawa lift. So for $p=2$ and $3$ and $k\geq 3$. we have
\begin{align*}
\dim S_{k,2}^{+}(K(p)) & = \dim S_{k,2}(Sp(2,\Z))+\dim \fM_{k-1,k-3}^{-}
(U_{npg}(p)), \\
\dim S_{k,2}^{-}(K(p)) & = \dim S_{k,2}(Sp(2,\Z))+\dim \fM_{k-1,k-3}^{+}
(U_{npg}(p)).
\end{align*}
Here we have 
\begin{multline*}
\sum_{k=0}^{\infty}\dim S_{k,2}(Sp(2,\Z))t^k
\\ = \frac{t^{14}+2t^{16}+t^{18}+t^{22}-t^{26}-t^{28}
+t^{21}+t^{23}+t^{27}+t^{29}-t^{33}}{(1-t^4)(1-t^6)(1-t^{10})(1-t^{12})}.
\end{multline*}
(See \cite{tsushima}, \cite{satoh}, \cite{ibuvect246}).
So we can give the following generating functions.

\begin{align*}
\sum_{k=0}^{\infty}\dim S_{k,2}^+(K(2))t^{k}
& = \frac{P_{+,2}^{(2)}(t)}{(1-t^2)(1-t^6)(1-t^8)(1-t^{12})},
\\
\sum_{k=0}^{\infty}\dim S_{k,2}^-(K(2))t^{k}
& = \frac{P_{-,2}^{(2)}(t)}{(1-t^2)(1-t^6)(1-t^8)(1-t^{12})},
\\
\sum_{k=0}^{\infty}\dim S_{k,2}^+(K(3))t^{k}
& = \frac{P_{+,2}^{(3)}(t)}{(1-t^2)(1-t^4)(1-t^6)(1-t^{12})},
\\
\sum_{k=0}^{\infty}\dim S_{k,2}^-(K(3))t^{k}
& = \frac{P_{-,2}^{(3)}(t)}{(1-t^2)(1-t^4)(1-t^6)(1-t^{12})},
\end{align*}
where 
\begin{align*}
P_{+,2}^{(2)}(t)  = & t^{10} + 2 t^{14} + t^{15} + t^{18} + t^{19}
 + t^{23} - t^{24} + t^{27} - t^{29},
\\
P_{-,2}^{(2)}(t)  = &  t^{11} + t^{14} + 
 2 t^{15} + t^{16} - t^{17} + t^{18} + t^{19} + t^{22} - t^{24} + t^{26} - t^{28},
\\
P_{+,2}^{(3)}(t)  = &  t^8 + t^{10} + t^{13} + t^{14} + t^{15} + t^{16} 
- t^{20} + t^{21} + t^{23} - t^{25},
\\
P_{-,2}^{(3)}(t) =  & t^9 + t^{11} + t^{12} + t^{14} + t^{15} + t^{16} + t^{22} 
- t^{24}.
\end{align*}

The result for $k=0$, $1$, $2$ are obtained 
by the dimension formula for higher $k$.
For example, we have $\dim A_4(K(2))=1$, so 
if $\dim S_{2,2}(K(2))\neq 0$, then this contradicts 
to the formula $\dim S_{6,2}(K(2))=0$. Arguments for the other 
cases are similar. 

Next we consider the case $j=4$ and $p=2$, $3$. 
There is one different point here. 
We have $j+2=6$ and $2k+j-2=2k+2$.
We have $S_{6}(\Gamma_0(2))=0$ and for $p=2$ we have no
new phenomenon and we have 
\begin{align*}
\dim S_{k,4}^+(K(2))=\dim \fM_{k+1,k-3}^{-}(U_{npg}(2))+\dim S_{k,4}(Sp(2,\Z)), \\
\dim S_{k,4}^-(K(2))=\dim \fM_{k+1,k-3}^{+}(U_{npg}(2))+\dim S_{k,4}(Sp(2,\Z)).
\end{align*}
But we have $\dim S_6(\Gamma_0(3))=\dim S_6^{-}(\Gamma_0(3))=1$. 
This means that we have the Yoshida lifting part 
$S_6^{-}(\Gamma_0(3))\times S_{2k+2}(SL_2(\Z))$ in 
$M_{k+1,k-3}^{+}(U_{npg}(3))$. So we have 
\begin{align*}
\dim S_{k,4}^{+}(K(3))  = & \dim \fM_{k+1,k-3}^{-}(U_{npg}(3)) 
+\dim S_{k,4}(Sp(2,\Z)), \\
\dim S_{k,4}^{-}(K(3)) = & \dim \fM_{k+1,k-3}^{+}(U_{npg}(3))
\\ & +\dim S_{k,4}(Sp(2,\Z))-\dim S_{2k+2}(SL_2(\Z)).
\end{align*}
By \cite{tsushima} and \cite{ibuvect246}, we have 
\begin{multline}
\sum_{k=0}^{\infty}\dim S_{k,4}(Sp(2,\Z))t^k
\\ =
\frac{t^{10} + t^{12} + t^{14} + t^{15} + t^{16} + t^{17} + t^{18} + t^{19} 
+ t^{20} + t^{21} +t^{23} - t^{30}}{(1-t^4)(1-t^6)(1-t^{10})(1-t^{12})}. 
\end{multline}

Anyway, by Theorem \ref{compactp2p3} and \ref{nonprincipaldim}, we have 
\begin{align*}
\sum_{f=0}^{\infty}\dim \fM_{f+4,f}^{+}(U_{npg}(2))t^f & 
=\frac{t^4+t^9}{(1-t^2)(1-t^4)(1-t^6)(1-t^8)}, \\
\sum_{f=0}^{\infty}\dim \fM_{f+4,f}^{-}(U_{npg}(2))t^f & 
= \frac{t^5+t^8}{(1-t^2)(1-t^4)(1-t^6)(1-t^8)},\\
\sum_{f=0}^{\infty}\dim \fM_{f+4,f}^{+}(U_{npg}(3))t^f 
& = \frac{t^2+t^6}{(1-t^2)(1-t^3)(1-t^4)(1-t^6)},\\
\sum_{f=0}^{\infty}\dim \fM_{f+4,f}^{-}(U_{npg}(3))t^f & 
= \frac{t^3+t^5}{(1-t^2)(1-t^3)(1-t^4)(1-t^6)}.\\
\end{align*}

Then we have 
\begin{align*}
\sum_{k=0}^{\infty}\dim S_{k,4}^+(K(2))t^k & = \frac{P_{+,4}^{(2)}(t)}
{(1-t^2)(1-t^6)(1-t^8)(1-t^{12})}, \\
\sum_{k=0}^{\infty}\dim S_{k,4}^-(K(2))t^k & = \frac{P_{-,4}^{(2)}(t)}
{(1-t^2)(1-t^6)(1-t^8)(1-t^{12})},\\
\sum_{k=0,k\neq 2}^{\infty}\dim S_{k,4}^+(K(3))t^k & = \frac{P_{+,4}^{(3)}(t)}
{(1-t^2)(1-t^4)(1-t^6)(1-t^{12})},\\
\sum_{k=0}^{\infty}\dim S_{k,4}^-(K(3))t^k & = \frac{P_{-,4}^{(3)}(t)}
{(1-t^2)(1-t^4)(1-t^6)(1-t^{12})},\\
\end{align*}
where 
\begin{align*}
P_{+,4}^{(2)}(t) = & 
t^8 + t^{10} + t^{11} + t^{12} + t^{14} + 2 t^{15} + t^{16} + 
  2t^{19} + t^{20} - t^{22} + t^{24} - t^{26}, \\
P_{-,4}^{(2)}(t)= & 
t^7 + t^{10} + t^{11} + t^{12} + t^{14} + 2 t^{15} + t^{16} + t^{19} + 
 2 t^{20} - t^{22} + t^{24} - t^{26},\\
P_{+,4}^{(3)}(t) = & 
t^6 + t^8 + t^9 + t^{10} + t^{11} + t^{12} + t^{14} + 
 2 t^{15} + t^{17} + t^{20} - t^{22}, \\
 P_{-,4}^{(3)}(t) = & 
 2 t^9 + t^{10} + t^{11} + 2 t^{12} + t^{14} + 2 t^{15} + t^{17} + t^{18} + 
 2 t^{20} - t^{21} - t^{22} - t^{24}.
\end{align*}

Here we do not know if $S_{2,4}^{+}(K(3))=0$ or not.
\\ 
 
\subsection{The case of small $k$}
For several small $k\geq 3$ with $j=0$ and some primes $p$, 
we give tables for $\fM_{k-3,\,k-3}^{\pm}(U_{npg}(p))$ and 
$\dim S_k^{\pm}(K(p))$. 
In all the tables below, we put 
\begin{align*}
& H=\dim \fM_{k-3,\,k-3}(U_{npg}(p)), 
\quad R=Tr_{k-3,\,k-3}(R(\pi)),
\\
& M^+=\dim \fM_{k-3,\,k-3}^{+}(U_{npg}(p)), \quad 
M^-=\dim \fM_{k-3,\,k-3}^{-}(U_{npg}(p)),
\\ 
&S_k^{\pm}=\dim S_k^{\pm}(K(p)),\quad  
s_2^{\pm}=\dim S_{2}^{\pm,new}(\Gamma_0(p)),
\end{align*}  
where $k$ is fixed for each table.

Numerical examples for the case $k=3$ has been given in 
in \cite{ibuquinary} p. 218.  We denote the class number and 
the type number of discriminant $p$ of the non-principal 
genus by $H(p)$ and $T(p)$. Since 
we have $S_{2k-2}(SL_2(\Z))=0$ for $k=3$, we have 
$\dim S_3^+(K(p))=H(p)-T(p)$ and 
$\dim S_3^-(K(p))=T(p)-1$ by Theorem \ref{paramodular}. 
Now we determine
all primes $p$ such that $\dim S_3^{+}(K(p))=0$.
This has some geometric meaning.
Let $K(N)^*$ be the maximal extension of $K(N)$ in $Sp(2,\R)$ of order $2^{\nu(N)}$
where $\nu(N)$ is the number of prime divisors of $N$.
In \cite{gritsenkohulek}, it was proved that
$K(N)^{*}\backslash {\mathcal H}_2$ is the moduli 
space of the Kummer surfaces associated to $(1,N)$  
polarizations. In particular, elements of $S_3(K(N)^*)$ give canonical differential 
forms of the Satake compactification of $K(N)^* \backslash {\mathcal H}$ 
(See \cite{freitag}).
It has been  
that $\dim S_3(K(N)^*)=0$ for $N\leq 40$ in   
\cite{breedingpooryuen} and also that 
 $\dim S_3(K(N)^*)\geq 1$ for $N=167$, $173$, $197$,
$213$, $285$ in \cite{gritsenkopooryuen}.
When $N=p$ is a prime, then we have 
$S_3(K(p)^*)=S_3^+(K(p))$. 
Now our new result for $k=3$ is given as follows.
\begin{prop}\label{weight3}
Let $p$ be a prime. Then we have 
\[
\dim S_3^{+}(K(p))=0
\]
if and only if $p$ is any prime such that $p\leq 163$ or 
$p=179$, $181$, $191$, $193$, $199$, $211$, $229$, $241$.
\end{prop}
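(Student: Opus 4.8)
The plan is to turn the statement into the positivity of a single explicit function of $p$, then split the argument into an asymptotic part for large $p$ and a direct check for small $p$, in the spirit of the proof of Theorem \ref{plusminusdiff}. First I would specialize Theorem \ref{paramodular} to $(k,j)=(3,0)$. Since $\dim S_3(Sp(2,\Z))=0$ and $\dim S_4(SL_2(\Z))=0$, the formula collapses to $\dim S_3^{+}(K(p))=\dim \fM^{-}_{0,0}(U_{npg}(p))$, which by \eqref{compactminus} equals $\tfrac12(H-R)$ with $H=\dim \fM_{0,0}(U_{npg}(p))$ and $R=Tr(R_{0,0}(\pi))$; equivalently $\dim S_3^{+}(K(p))=H-T$, as recorded after Theorem \ref{paramodular}. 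Setting $f_1=f_2=0$ makes every character $\chi_i=\chi_{0,0}(\phi_i)=1$, so Theorem \ref{nonprincipaldim} gives $H$ and Theorem \ref{compactdim} gives $R$ as completely explicit expressions in $p$, in the class numbers $h(\sqrt{-p})$, $h(\sqrt{-2p})$, $h(\sqrt{-3p})$, in $B_{2,\chi}$, and in quadratic residue symbols. Thus the claim becomes the determination of all $p$ with $H=R$ (i.e. $H=T$).

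Next I would record the dominant behaviour of $\dim S_3^{+}(K(p))=\tfrac12(H-R)$, separating $p\equiv 1\bmod 4$ and $p\equiv 3\bmod 4$. The leading positive term is the $\chi_1$-contribution $\tfrac{p^2-1}{2880}$ to $H$; the leading negative contribution is the $\chi_2 B_{2,\chi}$-term of $R$, which is $O(p^{3/2})$, while all class-number terms in both $H$ and $R$ are $O(\sqrt p\,\log p)$. Hence $\dim S_3^{+}(K(p))\sim \tfrac{p^2}{5760}$ and is positive once $p$ is large.

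To make this effective I would bound $\tfrac12(H-R)$ from below by lower-bounding $H$ and upper-bounding $R$. At $(f_1,f_2)=(0,0)$ every class-number coefficient in Theorem \ref{nonprincipaldim} is nonnegative, so $H\ge \tfrac{p^2-1}{2880}$ for $p>3$. For $R$, I would bound $B_{2,\chi}$ from above by $L(2,\chi)\le\zeta(2)$, giving $B_{2,\chi}\le D_0\sqrt{D_0}/6$, and bound each of $h(\sqrt{-p})$, $h(\sqrt{-2p})$, $h(\sqrt{-3p})$ from above by \eqref{imaginaryclassno}. Collecting the congruence-dependent but bounded coefficients yields $\dim S_3^{+}(K(p))\ge g(p)$ for an explicit elementary function of the shape $g(p)=c_1p^2-c_2p^{3/2}-c_3\sqrt p\,\log p-\cdots$ with $c_1>0$, and an elementary monotonicity (derivative) estimate shows $g(p)>0$ for all $p\ge p_0$ with an explicit threshold $p_0$. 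This settles every prime $p\ge p_0$.

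Finally, for the finitely many primes $p<p_0$ I would evaluate the explicit formula for $\dim S_3^{+}(K(p))=\tfrac12(H-R)$ directly, using tabulated values of the three class numbers and of $B_{2,\chi}$, and read off that the value is $0$ exactly for $p\le 163$ and $p\in\{179,181,191,193,199,211,229,241\}$. I expect the main obstacle to be that the crude bounds place $p_0$ in the thousands, so the $p^2$ term must overtake the $O(p^{3/2})$ term only around $\sqrt p\approx 55$; consequently the ``finite'' verification must cover several hundred primes and has to be organized (by residue classes modulo small integers to fix the symbols, and by computer) so as to be genuinely exhaustive. Because the sporadic vanishing primes $179,\dots,241$ lie well below the asymptotic regime, where the negative $O(p^{3/2})$ and $O(\sqrt p\,\log p)$ terms still dominate the lower bound $g$, no inequality argument can reach them, and this sporadic vanishing must be established by direct evaluation of the explicit formula rather than by the bound.
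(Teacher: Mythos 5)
Your proposal is correct and follows essentially the same route as the paper: reduce to $\dim S_3^{+}(K(p))=\tfrac12(H-R)=H-T$ with all characters equal to $1$, lower-bound $H$ by its $\chi_1$-term, upper-bound $R$ via $L(2,\chi)\le\zeta(2)$ and the Siegel-type class number bound \eqref{imaginaryclassno}, prove positivity of the resulting elementary function beyond an explicit threshold by a monotonicity argument, and finish by direct evaluation for the remaining primes (the paper's threshold is $p\ge 3327$, obtained with a slightly sharper lower bound $H>\tfrac{p^2-1}{2880}+\tfrac{17(p+1)}{288}$ that also keeps the $\chi_6$ and $\chi_7$ contributions). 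Your anticipated obstacles (threshold in the thousands, sporadic vanishing primes unreachable by inequalities) match exactly what the paper's proof confronts.
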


By the way, we have $\dim S_3^+(K(p))=1$ if 
$p=167$, $173$, $197$, $223$, $233$, $239$, $251$, $271$, $277$, $281$, $313$, $331$, $337$ and 
$\dim S_3^{+}(K(p))=2$ if $p=227$, $257$, $263$, $269$, $283$, $349$, $379$. $409$, $421$.

\begin{proof}[Proof of Proposition \ref{weight3}]
For $p=2$, $3$, $5$, we know $S_3^+(K(p))=0$ for direct calculation 
of the formula in Theorem \ref{paramodular}.
Now assuming $p>5$, by rough estimation first we show that 
$\dim S_3^+(K(p))=H(p)-T(p)=(H(p)-Tr(R_{0,0}(\pi))/2>0$ 
for any $p\geq 3327$.  
On the other hand, for each prime $p<3327$,
we calculate $\dim S_3^+K(p))=(H(p)-Tr(R_{0,0}(\pi)))/2$ directly 
from Theorem \ref{compactdim} and 
Theorem \ref{nonprincipaldim} and can show concretely that 
it is non-negative. This completes the proof.
 
Now we explain how to obtain the estimation for big $p$
given above.
First of all, for $k=3$, any $\chi_i$ in the formula in 
$H(p)$ and $Tr(R_{0,0}(\pi))$ is $1$. 

Denote by $D_0$ the discriminant of the real quadratic field $\Q(\sqrt{p})$ and by $\chi$ the 
character associated to $\Q(\sqrt{p})$. Then by \cite{bernoulli} Theorem 9.6, we have 
\[
L(2,\chi)=\frac{\pi^2}{D_0\sqrt{D_0}}B_{2,\chi}.
\]
By evaluating the Euler product, we have  
$0<L(2,\chi)<\zeta(2)=\pi^2/6$, 
so we have  
\[
\begin{array}{ll}
B_{2,\chi}<\dfrac{p^{3/2}}{6} & \text{ if $p\equiv 1 \bmod 4$,} \\[2ex]
B_{2,\chi}<\dfrac{4p^{3/2}}{3} & \text{ if $p\equiv 3 \bmod 4$.}
\end{array}
\]
So since 
\[
\frac{4}{3}\leq \frac{11}{6} \quad \text{ and } \quad 
\frac{1}{6}\left(9-2\left(\frac{2}{p}\right)\right)\leq \frac{11}{6},
\]
the term in $Tr R_{0,0}(\pi)$ containing $B_{2,\chi}$ is bounded by 
$11p^{3/2}/(2^6\cdot 3^2)$ from the above.
Considering the evaluation \eqref{imaginaryclassno} 
for cases $p\equiv 1 \bmod 4$ 
and $p\equiv 3 \bmod 4$, we can give a common upper bound for 
$h(\sqrt{-p})$, $h(\sqrt{-2p})$ and $h(\sqrt{-3p})$.  
By these inequalities we can estimate $Tr(R_{0,0}(\pi))$ 
in Theorem \ref{compactdim} from the  above as follows. 
\begin{align*}
Tr(R_{0,0}(\pi))& <
\frac{11p^{3/2}}{2^6\cdot 3^2}
 +\frac{1}{2^4\pi}(4\sqrt{p}\log(p)+\sqrt{p})
\\ & \quad +\frac{1}{2^3\pi}(4\sqrt{2p}\log(8p)+\sqrt{2p})
 + \frac{1}{3\pi}(4\sqrt{3p}\log(12p)+\sqrt{3p}).
\end{align*}
Here $Tr(R_{0.0}(\pi))=2T(p)-H(p)$. 
On the other hand, by Theorem 
\ref{nonprincipaldim}, comparing the part for $(-1/p)=1$ and $-1$, 
and also for $(-3/p)=+1$ and $-1$, for $p>5$ we see that 
\[
H(p)>\frac{p^2-1}{2880}+\frac{1}{36}(p+1)+\frac{1}{32}(p+1)
=\frac{p^2-1}{2880}+\frac{17(p+1)}{288}.
\]
So if we put 
\begin{align*}
g(x) &=\frac{x^2-1}{2880}+\frac{17(x+1)}{288}
-\frac{11x^{3/2}}{2^6\cdot 3^2}
 -\frac{1}{2^4\pi}(4\sqrt{x}\log(x)+\sqrt{x})
\\ & \quad -\frac{1}{2^3\pi}(4\sqrt{2x}\log(8x)+\sqrt{2x})
 - \frac{1}{3\pi}(4\sqrt{3x}\log(12x)+\sqrt{3x}),
\end{align*}
then we have $2\dim S_3^+(K(p))>g(p)$ for $p\geq 7$.
For $x>0$, we have 
\[
\frac{dg(x)}{dx}=\frac{h(x)}{5760\pi x},
\]
where 
\begin{align*}
h(x)= & -1620 - 3240 \sqrt{2} - 4320 \sqrt{3} + 340 \pi \sqrt{x]}- 
  165 \pi x + 4 \pi x^{3/2} 
  \\ & - 720 \log(x) - 
  1920 \sqrt{3}\log(3 x) - 1440 \sqrt{2} \log(8x).
\end{align*}
We also have 
\[
h'(x):=\frac{d h(x)}{dx}=\frac{-720-1440\sqrt{2}-1920\sqrt{3}
+\pi(170\sqrt{x}-165 x+6x^{3/2})}{x}.
\]
It is easy to see that the numerator of $h'(x)$ increases 
monotonously for $x>10$ by elementary calculus, and 
since we have $h'(800)=26.06...>0$, we have 
$h'(x)>0$ for $x>800$. This means that 
$g(x)$ increases monotonously for $x>800$. We have 
$g(3327)=0.60...>0$ so we have $g(x)>0$ for 
$x\geq 3327$. 
So we are done.
\end{proof}

Next we study the cases $k=4$, $5$, $6$, $8$ for small $p$.
We have $S_k(Sp(2,\Z))=0$ and 
$S_{2k-2}(SL_2(\Z))=0$ in these cases,  
so we have the following simple relations.
\begin{align*}
\dim S_{k}^{+}(K(p)) & =\dim \fM_{k-3,\,k-3}^{-}(U_{npg}(p)), \\
\dim S_{k}^{-}(K(p)) & = \dim \fM_{k-3,\,k-3}^{+}(U_{npg}(p)).
\end{align*} 
A table of $\dim S_4^{\pm}(K(p))$ has been given in 
\cite{poor} Table 4 in p. 28 by elaborate calculation of 
constructing paramodular forms, but Table \ref{k4} below was obtained 
by our theoretical result independently. 
It is nice to see that the results coincide. 
(We added $p=601$ and $607$ as a small tip to their table.)

The numerical tables in the cases $k=5$, $6$, $8$ are given in 
Table \ref{k5}, \ref{k6}, \ref{k8}, respectively.

By the way, when $k=8$, using our formula, we have  
$\dim S_8^+(K(277))=1761$ and 
$\dim S_8^-(K(277))=768$.
These do not coincide with the numbers given in 
\cite{poor} page 31.
The authors are aware of the error and will publish a 
correction~\cite{website}. 

When $k=7$, we have $\dim S_{2k-2}(SL_2(\Z))=1$ and 
$S_7(Sp(2,\Z))=0$, so we have 
\begin{align*}
\dim S_7^{+}(K(p)) & =\dim \fM_{4,4}^{-}(U_{npg}(p))
-\dim S_2^{+,new}(\Gamma_0(p)), \\
\dim S_7^{-}(K(p)) & = \dim \fM_{4,4}^{+}(U_{npg}(p))
-1-\dim S_2^{-,new} (\Gamma_0(p)).
\end{align*}
The numerical table for $k=7$ is given in Table \ref{k7}.

When $k=10$, we have $\dim S_{10}(Sp(2,\Z))=\dim S_{18}(SL_2(\Z))=1$, 
so the old form in $S_{10}(K(p))$ comes from the Saito--Kurokawa lift and 
we have also the Yoshida lift part in $\fM_{7,7}$. So we have 
\begin{align*}
\dim S_{10}^{+}(K(p)) & = 1+\dim \fM_{7,7}^{-}(U_{npg}(p))-
\dim S_2^{+,new}(\Gamma_0(p)), \\
\dim S_{10}^{-}(K(p)) & = \dim\fM_{7,7}^{+}(U_{npg}(p))
-\dim S_2^{-,new}(\Gamma_0(p)).
\end{align*}
The numerical table for $k=10$ is given in Table \ref{k10}.

{\bf Acknowledgement:}
We would like to thank Neil Dummigan for informing the interesting result in his joint work \cite{dummigan} and for answering author's question in detail,  
as well as his big interest around the problem, and also to Cris Poor and David S. Yuen for constantly asking the author the dimension formula 
with involution and sometimes assuring numerical correctness of our calculation by their examples.

\begin{table}[h]
\caption{The case $k=5$.} \label{k5}
\begin{center}
{\small
\begin{tabular}{crrrrrrrrrrrrrrrrr}\hline
p & 7 & 11 & 13 & 17 & 19 & 23 & 29 & 31 & 37 & 41 & 43 & 47 & 53 & 59 & 61 & 67 
& 71 \\ 
H & 1 & 2 & 3 & 4 & 5 & 5 & 9 & 10 & 14 & 15 & 16 & 16 & 22 & 24 & 31 & 33 & 33 \\
R & 1 & 2 & 3 & 4 & 5 & 5 & 9 & 10 & 14 & 15 & 16 & 14 & 20 & 22 & 31 & 31 & 29 \\ 
$S_5^+$ & 0 & 0 & 0 & 0 & 0 & 0 & 0 & 0 & 0 & 0 & 0 & 1 & 1 & 1 & 0 & 1 & 2\\
$S_5^-$ & 1 & 2 & 3 & 4 & 5 & 5& 9 & 10 & 14 & 15 & 16 & 15 & 21 & 23 & 31 & 32 & 31 \\ \hline
\end{tabular}
}
\end{center}
\end{table}

\begin{table}[htbp]
\caption{The case $k=6$.}\label{k6}
\begin{center}
{\small
\begin{tabular}{crrrrrrrrrrrr}\hline 
p & 7 & 11 & 13 & 17 & 19 & 23 & 29 & 31 & 37 & 41 & 43 & 47 \\ 
H & 2 & 3 & 5 & 6 & 8 & 9 & 14 & 17 & 24 & 25 & 29 & 30 \\
R &-2 & -3 & -5 & -6 & -8 & -9 & -14 & -17 & -24 & -23 & -27 
& -24 \\ 
$S_6^+$ & 2 & 3 & 5 & 6 & 8 & 9 & 14 & 17 & 24 & 24 & 28 & 27 \\
$S_6^-$ & 0 & 0 & 0 & 0 & 0 & 0 & 0 & 0 & 0 & 1 & 1 & 3 
\\ \hline
\end{tabular}
}
\end{center}
\end{table}

\begin{table}[htbp]
\caption{The case $k=8$.}\label{k8} 
\begin{center}
{\small
\begin{tabular}{crrrrrrrrrrrr}\hline 
p & 7 & 11 & 13 & 17 & 19 & 23 & 29 & 31 & 37 & 41 & 43 & 47 \\ 
H & 4 & 6 & 10 & 14 & 17 & 22 & 34 & 40 & 57 & 64 & 72 & 80 \\
R & -4 & -6 & -10 & -12 & -17 & -18 & -28 & -36 & -49 & -48 & -56 & -50 \\ 
$S_8^+$ & 4 & 6 & 10 & 13 & 17 & 20 & 31 & 38 & 53 & 56 & 64 & 65 \\ 
$S_8^-$ & 0 & 0 & 0 & 1 & 0 & 2 & 3 & 2 & 4 & 8 & 8 & 15  \\ \hline
\end{tabular}
}
\end{center}
\end{table}

\begin{table}[!h]
\caption{The case $k=4$}\label{k4}.
\begin{center}
{\tiny
\begin{tabular}{crrrrrrrrrrrrr}\hline
p & 7 & 11 & 13 & 17 & 19 & 23 & 29 & 31 & 37 & 41 & 43 & 47 \\ 
H & 1 & 1 & 2 & 2 & 3 & 3 & 4 & 6 & 8 & 7 & 9 & 8  \\
R & -1 & -1 & -2 & -2 & -3 & -3 & -4 & -6 & -8 & -7 & -9 & -8 \\
$S_4^+$ & 1 & 1 & 2 & 2 & 3 & 3 & 4 & 6 & 8 & 7 & 9 & 8 \\
$S_4^-$ & 0 & 0 & 0 & 0 & 0 & 0 & 0 & 0 & 0 & 0 & 0 & 0 \\ \hline
p & 53 & 59  & 61 & 67 & 71 & 73 & 79 & 83 & 89 & 97 & 101 & 103 \\ 
H & 10 & 11& 16 & 17 & 15 & 21 & 22 & 19 & 23 & 32 & 28 & 33  \\
R & -10 & -11 & -16 & -17 & -15 & -21 & -22 & -17 & -23 & -32 & -26 & -31  \\
$S_4^+$ & 10 & 11 & 16 & 17 & 15 & 21 & 22 & 18 & 23 & 32 & 27 & 32 \\
$S_4^-$ & 0 & 0 & 0 & 0 & 0 & 0 & 0 & 1 & 0 & 0 & 1 & 1 \\ \hline 
p & 107 & 109 & 113 & 127 & 131 & 137 & 139 & 149 & 151 & 157 & 163 & 167 \\
H &  29 & 38 & 34 & 46 & 41 & 47 & 53 & 54 & 61 & 68 & 69 & 63 \\
R & -25 & -38 & -32 & -42 & -35 & -43 & -49 & -48 & -57 & -62 & -61 & -47 \\
$S_4^+$ & 27 & 38 & 33 & 44 & 38 & 45 & 51 & 51 & 59 & 65 & 65 & 55 \\
$S_4^-$ & 2 & 0 & 1 & 2 & 3 & 2 & 2 & 3 & 2 & 3 & 4 & 8 \\ \hline
p & 173 & 179 & 181 & 191 & 193 & 197 & 199 & 211 & 223 & 227 & 229 & 233 \\
H & 70 & 71 & 86 & 80 & 96 & 88 & 97 & 107 & 118 & 109 & 128 & 119 \\
R & -54 & -59 & -80 & -66 & -88 & -68 & -85 & -93 & -94 & -73 & -114 & -93 \\ 
$S_4^+$ & 62 & 65 & 83 & 73 & 92 & 78 & 91 & 100 & 106 & 91 & 121 & 106 \\
$S_4^-$ & 8 & 6 & 3 & 7 & 4 & 10 & 6 & 7 & 12 & 18 & 7 & 13 \\
 \hline
p & 239 & 241 & 251 & 257 & 263 & 269 & 271 & 277 & 281 & 283 & 293 & 307 \\
H & 120 & 140 & 131 & 142 & 143 & 154 & 166 & 178 & 167 & 179 & 180 & 207  \\
R & -90 & -126 & -95 & -106 & -97 & -114 & -132 & -144 & -131 & -131 & -118 & -147  \\ $S_4^+$ & 105 & 133 & 113 & 124 & 120 & 134 & 149 & 161 & 149 & 155 & 149 & 177  \\
$S_4^-$ & 15 & 7 & 18 & 18 & 23 & 20 & 17 & 17 & 18 & 24 & 31 & 30 \\ 
\hline 
p & 311 &  313 & 317 & 331 &  337 & 347 & 349 & 353 & 359 & 367 & 373 & 379  \\
H & 195 & 221 & 208 & 237 & 252 & 239 & 268 & 254 & 255 & 286 & 302 & 303 \\
 R& -131 & -179 & -140 & -185 & -202 & -145 &  -210 & -170 & -165 & -196 & -224 & -225  \\
$S_4^+$ & 163 &  200 & 174 & 211 & 227 & 192 & 239 & 212 & 210 & 241 & 263 & 264 \\
$S_4^-$ & 32 & 21 & 34 & 26 & 25 & 47 & 29 & 42 & 45 & 45 & 39 & 39 \\
\hline 
p &  383 & 389 & 397 &  401 & 409 & 419 & 421 & 431 & 433 & 439 & 443 & 449  \\
H &  288 & 304 & 338 & 322 & 357 &  341 & 376 & 360 & 396 &  397 & 379 & 398 \\
R &  -164 & -208 & -240 & -226 & -279 & -203 & -290 & -214 & -290 &-269 & -215 & -268   \\
$S_4^+$ & 226 & 256 & 289 & 274 & 318 & 272 & 333 & 287 & 343 & 333 & 297 & 333  \\
$S_4^-$ & 62 & 48 & 49 & 48 & 39 & 69 & 43 & 73 & 53 & 64 & 82 & 65 \\ \hline 
p & 457 & 461 & 463 & 467 & 469 & 487 & 491 & 499 & 503 & 509 & 521 & 523  \\
H & 437 & 418 & 438 & 419 & 440 & 481 & 461 & 503 & 483 & 504 & 527 & 549 \\
R & -319& -252 & -286 & -223 & -242  &-305  & -265 & -341 & -243 & -296 & -325 & -325 \\
$S_4^+$ & 378 & 335 & 362 & 321 & 341 & 393 & 363 & 422 & 363 & 400 & 426 & 437 \\
$S_4^-$ & 59 & 83 & 76 & 98 & 99 & 88 & 98 & 81 & 120 & 104 & 101 & 112 \\ \hline
p & 541 & 547 & 557 & 563 &  569 & 571 & 577 & 587 & 593 & 599 & 601 & 607 \\
H &  596 & 597 & 598 & 599 & 623 & 647 & 672 & 649 & 674 & 675 & 725 & 726 \\
R & -416 & -359 & -322 & -287 & -381 & -413 & -444 & -311 & -362 & -363 & -497 & -404 \\
$S_4^+$ & 506 & 478 & 460 & 443 & 502 & 530 & 558 & 480 & 518 & 519 & 611 & 565 \\
$S_4^-$ & 90 & 119 & 138 & 156 & 121 & 117 & 114 & 169 & 156 & 156 & 114 & 161 \\ \hline
\end{tabular}
}
\end{center}
\end{table}

\begin{table}[!hbt]
\caption{The case $k=7$}\label{k7}
\begin{center}
{\small
\begin{tabular}{crrrrrrrrrrrr} \hline 
p & 7 & 11 & 13 & 17 & 19 & 23 & 29 & 31 & 37 & 41 & 43 & 47 \\ 
H & 3 & 5  & 8 & 10 & 13 & 15 & 24 & 28 & 40 & 43 & 49 & 52 \\
R & 3 & 5 & 8 & 10 & 13 & 13 & 22 & 26 & 36 & 37 & 41 & 36 \\ 
$M^+$ & 3 & 5 & 8 & 10 & 13 & 14 & 23 & 27 & 38 & 40 & 45 & 44 \\
$M^-$ & 0 & 0 & 0 & 0 & 0 & 1 & 1 & 1 & 2 & 3 & 4 & 8 \\
$s_2^+$ & 0 & 0 & 0 & 0 & 0 & 0 & 0 & 0 & 1 & 0 & 1 & 0 \\
$s_2^-$ & 0 & 1 & 0 & 1 & 1 & 2 & 2 & 2 & 1 & 3 & 2 & 4 \\
$S_7^+$ & 0 & 0 & 0 & 0 & 0 & 1 & 1 & 1 & 1 & 3 & 3 & 8 \\
$S_7^-$ & 2 & 3 &  7 & 8 & 11 & 11 & 20 & 24 & 36 & 36 & 42 & 39 \\
\hline
\end{tabular}
}
\end{center}
\end{table}

\begin{table}[!htbp]
\caption{The case $k=10$.}\label{k10}
\begin{center}
{\small
\begin{tabular}{crrrrrrrrrrrr}\hline 
p & 7 & 11 & 13 & 17 & 19 & 23 & 29 & 31 & 37 & 41 & 43 & 47 \\ 
H & 6 & 12 & 18 & 26 & 34 & 44 & 70 & 82 & 116 & 134 & 150 & 170 \\ 
R & -6 & -10 & -16 & -20 & -28 & -30 & -48 & -60 & -82 & -82 & -94 
& -84 \\ 
$M^+$ & 0 & 1 & 1 & 3 & 3 & 7 & 11 & 11 & 17 & 26 & 28 &  43  
\\
$M^-$ & 6 & 11 & 17 & 23 & 31 & 37 & 59 & 71 & 99 & 108 & 122 & 127 \\
$s_2^+$ & 0 & 0 & 0 & 0 & 0 & 0 & 0 & 0 & 1 & 0 & 1 & 0 \\
$s_2^-$ & 0 & 1 & 0 & 1 & 1 & 2 & 2 & 2 & 1 & 3 & 2 & 4 \\
$S_{10}^{+}$ & 7 & 12 & 18 & 24 & 32 & 38 & 60 & 72 & 99 & 109 & 122 & 128 \\
$S_{10}^{-}$ & 0 & 0 & 1 & 2 & 2 & 5 & 9 & 9 & 16 & 23 & 26 & 39 \\
\hline 
\end{tabular}
}
\end{center}
\end{table}

\end{document}